\documentclass[11pt,a4paper]{amsart}

\usepackage{amsmath,amsfonts,amsthm,amssymb,mathrsfs,bbm,enumitem}
\usepackage{color,microtype}

\theoremstyle{plain}
\newtheorem{theorem}{Theorem}[section]
\newtheorem{lemma}[theorem]{Lemma}
\newtheorem{proposition}[theorem]{Proposition}
\newtheorem{corollary}[theorem]{Corollary}
\newtheorem*{claim}{Claim}
\theoremstyle{remark}
\newtheorem{remark}[theorem]{Remark}
\newtheorem{definition}[theorem]{Definition}
\newtheorem{problem}[theorem]{Problem}

\newcommand{\XX}{\mathcal{X}}
\newcommand{\dist}{\operatorname{dist}}

\DeclareMathOperator{\expectation}{\mathsf{E}}

\DeclareMathOperator{\var}{\mathrm{var}}
\DeclareMathOperator*{\esssup}{ess\,sup}

\allowdisplaybreaks[2]

\title[On eventually always hitting time statistics]{Dichotomy
  results for eventually always hitting time statistics and
  almost sure growth of extremes}

\author{Mark Holland}
\address{Mark Holland, University of Exeter, Department of Mathematics, North Park Road, Exeter EX4 4QF, UK}
\email{m.p.holland@exeter.ac.uk}

\author{Maxim Kirsebom}
\address{Maxim Kirsebom, University of Hamburg, Department of Mathematics, Bundesstrasse 55, 20146 Hamburg, Germany}
\email{maxim.kirsebom@uni-hamburg.de}

\author{Philipp Kunde}
\address{Philipp Kunde, University of Hamburg, Department of Mathematics, Bundesstrasse 55, 20146 Hamburg, Germany}
\email{pkunde.math@gmail.com}

\author{Tomas Persson}
\address{Tomas Persson, Centre for Mathematical Sciences, Lund University, Box 118, 221 00 Lund, Sweden}
\email{tomasp@maths.lth.se}

\thanks{This project was initiated at a ``workshop on shrinking
  targets'' at the University of Hamburg in November 2019. The
  workshop was funded by means of the University of Hamburg's
  status as a University of Excellence. We would like to thank
  the University for this financial support and
  opportunity. M. Holland acknowledges support from the EPSRC
  grant EP/P034489/1.}

\subjclass[2010]{37E05, 37A50, 37D05, 60G70, 11J70}

\keywords{Extreme value theory, eventually always hitting points,
  Robbins--Siegmund series criterion, extremal index}

\date{\today}

\begin{document}

\begin{abstract}
  Suppose $(f,\XX,\mu)$ is a measure preserving dynamical system
  and $\phi \colon \XX \to \mathbbm{R}$ a measurable function.
  Consider the maximum process $M_n:=\max\{X_1,\ldots,X_n\}$,
  where $X_i=\phi\circ f^{i-1}$ is a time series of observations
  on the system.  Suppose that $(u_n)$ is a non-decreasing
  sequence of real numbers, such that $\mu(X_1>u_n)\to 0$. For
  certain dynamical systems, we obtain a zero--one measure
  dichotomy for $\mu(M_n\leq u_n\,\textrm{i.o.})$
  depending on the sequence $u_n$. Specific examples are
  piecewise expanding interval maps including the Gau\ss{} map.
  For the broader class of non-uniformly hyperbolic dynamical
  systems, we make significant improvements on existing
  literature for characterising the sequences $u_n$.  Our
  results on the permitted sequences $u_n$ are commensurate with
  the optimal sequences (and series criteria) obtained by Klass
  (1985) for i.i.d.\ processes. Moreover, we also develop new
  series criteria on the permitted sequences in the case where
  the i.i.d.\ theory breaks down. Our analysis has strong
  connections to specific problems in eventual always hitting
  time statistics and extreme value theory.
\end{abstract}

\maketitle

\section{Introduction}\label{sec.introduction}
\subsection{General introduction and set up}
Consider a dynamical system $(\XX, \mathcal{B}, \mu, f)$, where
$(\XX, \mathcal{B}, \mu)$ is a measure space equipped with a
compatible metric which we denote by $\dist$ (that is, a metric
such that open subsets of $\XX$ are measurable), $f \colon \XX
\rightarrow \XX$ is a measurable transformation, and $\mu$ is an
$f$-invariant probability measure supported on $\XX$. Given an
observable $\phi \colon \XX \rightarrow \mathbbm{R}$, i.e.\ a
measurable function, we consider the stationary stochastic
process $X_1, X_2, \dots$ defined as
\begin{equation*} 
  X_i =\phi \circ f^{i-1}, \quad i \geq 1,  
\end{equation*}
and its associated maximum process $M_n$ defined as
\begin{equation*} 
  M_n = \max(X_1,\dots,X_n). 
\end{equation*}  
 
Extreme value theory is based on understanding the limiting
behaviour of $M_n$, either almost surely or in distribution. 
We focus on the former task of understanding almost sure growth
rates for $M_n$. This is a form of
strong law of large numbers for the maximum process $(M_n)$.  If
$\mu$ is ergodic and $\phi$ is essentially bounded then almost
surely, $M_n \to \esssup \phi$ while if $\esssup \phi = \infty$,
$M_n\to\infty$ almost surely.

A fundamental problem is to determine optimal bounding sequences
$u_n$ and $v_n$ such that almost surely there exists $N > 0$,
with $v_n\leq M_n(x)\leq u_n$, for all $n\geq N$. (Here $N$
depends on $x$).  For independent, identically distributed
(i.i.d.)\ random variables, this problem has been widely studied,
e.g.\ \cite{Barndorff, Embrechts, Galambos, Klass1, Klass2}. The
main difficultly is to find the lower bound sequence $v_n$.  The
upper bound sequence $u_n$ is generally easier to establish from
standard First and Second Borel--Cantelli Lemmas. Let us
introduce some standard notations.  For a sequence of sets
$(E_n)$, we define $(E_n \text{ i.o.})$ to be the set of points
$x\in\XX$ for which $x\in E_{n_k}$ for an infinite subsequence
$(n_k)$. Here `i.o.' means \emph{infinitely often}.  We define
$(E_n \text{ ev.})$ to be the set of points $x\in\XX$ for which
there exists $N>0$ such that $x\in E_{n}$ for all $n>N$. Here
`ev.' means \emph{eventually}.  Now, for general non-decreasing
sequences $u_n$ the events $\{M_n>u_n \text{ i.o.} \}$ and
$\{X_n>u_n \text{ i.o.}\}$ are equal (modulo a set of zero $\mu$
measure). Thus by the First Borel--Cantelli Lemma, if $\sum_{n}
\mu(X_1>u_n) < \infty$ we deduce that $\mu (M_n \leq u_n \text{
  ev.}) = 1$. Moreover if a dynamical Borel--Cantelli property
holds for $(X_n)$, with $\sum_{n} \mu(X_1>u_n)=\infty$ then
$\mu(M_n\geq u_n \text{ i.o.})=1$.

\subsection{Dichotomy results for maxima}

For i.i.d.\ processes, a relevant criterion for a sequence $(u_n)$ to
be an eventual lower bound for $M_n$ is given in particular by
\cite[Theorem~2]{Klass2}, via the \emph{Robbins--Siegmund series
  criterion}.  This can be stated as follows. Suppose that
$(\hat{X}_n)$ is an i.i.d.\ process, with probability measure $P$, and
let $u_n$ denote a non-decreasing sequence with $P(\hat{X}_1>u_n)\to
0$, $\sum_n P(\hat{X}_1>u_n) = \infty$ and
$nP(\hat{X}_1>u_n)\to\infty$. Then for the corresponding maximum
process $\hat{M}_n$ we have the dichotomy
\begin{align}
  \label{eq.rs1} \sum_{n=1}^{\infty} P (\hat{X}_1>u_n)
  e^{-nP(\hat{X}_1>u_n)} &<\infty \quad \Rightarrow \quad P (\hat{M}_n
  \geq u_n \text{ ev.})=1,\\
  \label{eq.rs2} \sum_{n=1}^{\infty} P (\hat{X}_1>u_n)
  e^{-nP(\hat{X}_1>u_n)} &=\infty \quad \Rightarrow \quad P
  (\hat{M}_n\geq u_n \text{ ev.})=0.
\end{align} 
Moreover, when $P(\hat{X}_1>u_n)\to c$, then
$P(\hat{M}_n\leq u_n \text{ i.o.})=0$, while if
\[
\liminf_{n\to\infty} nP(\hat{X}_1>u_n)<\infty,
\]
then $P(\hat{M}_n\leq u_n \text{ i.o.})=1$.

However, within a dynamical systems framework, and also for general
dependent random variables, optimal bounds on almost sure growth rates
of $M_n$ are unknown in general. Recent progress on this problem in
dynamical systems includes the works of \cite{GHPZ, GNO, HNT} where
dynamical Borel--Cantelli approaches are used to determine bounds on
$M_n$ for a wide class of dynamical systems, e.g.\ non-uniformly
expanding maps, and hyperbolic systems. More recently, this problem
has also been discussed indirectly in the analysis of eventual always
hitting time statistics \cite{GP,Kelm, OhKelmer, KeYu, KKP, Kleinbock}. 
For these latter papers, they consider a sequence of balls
$(B_n)$, and define an \emph{eventually always hitting} (EAH) event
$\mathcal{H}_{\mathrm{ea}}$ via
\begin{equation}
  \mathcal{H}_{\mathrm{ea}}
  =\bigcup_{n=1}^{\infty}\bigcap_{m=n}^{\infty}
  \bigcup_{k=0}^{m-1} f^{-k}(B_m).
\end{equation} 
Equivalently, $x\in\mathcal{H}_{\mathrm{ea}}$, if for the sequence
$\mathbf{B}=(B_n)$, there exists $m_0(x)\in\mathbbm{N}$, such that for
all $m\geq m_0(x)$ we have
\[
\{x,f(x),\ldots,f^{m-1}(x)\}\cap B_m\neq\emptyset.
\]
The term eventually always hitting was coined by Kelmer in \cite{Kelm}
where necessary and sufficient conditions for
$\mathcal{H}_{\mathrm{ea}}$ to be of full measure are established in
the context of discrete-time homogeneous flows on finite volume
hyperbolic manifolds of constant negative curvature. Shortly
afterwards Kelmer and Yu \cite{KeYu} extended the investigation to
flows on higher-rank homogeneous spaces while Kelmer and Oh considered
the case of geodesic flow on geometrically finite hyperbolic manifolds
of infinite volume \cite{OhKelmer}. Also, Kleinbock and Wadleigh
\cite{KleinWad} studied the concept in the context of higher
dimensional Diophantine approximations.

The problems addressed in \cite{KKP,Kleinbock} include conditions
placed on the sequence of measures $\mu(B_n)$ that lead to either
$\mu(\mathcal{H}_{\mathrm{ea}})=0$ or
$\mu(\mathcal{H}_{\mathrm{ea}})=1$. In fact, by ergodicity they show
that $\mu(\mathcal{H}_{\mathrm{ea}})$ can only take these zero--one
values. To link this directly to the maximum process $(M_n)$, consider
the observable $\phi(x) = \psi(\mathrm{dist}(x,\tilde{x}))$, where
$\psi \colon [0,\infty) \to \overline{\mathbbm{R}}$ is a decreasing
  continuous function with $\psi(y) \to \infty$ as $y \to 0$. (For
  example, one can take $\psi(y)=-\log(y)$.) Then the event
  $\{X_1>u_n\}$ corresponds directly to a target
  $B_n=B(\tilde{x},r_n)$ with $u_n=\psi(r_n)$, and the event
  $\{M_n\leq u_n\}$ is the event $\cap_{k=1}^{n}\{X_k\leq u_n\}$. It
  follows that
\begin{align*}
    \{M_n> u_n \text{ ev.}\} &:= \liminf_{n \to \infty}
    \Biggl( \bigcap_{k=1}^{n} \{X_k\leq u_n\} \Biggr)^\complement
    = \liminf_{n \to \infty} \bigcup_{k=1}^{n} \{X_k>
    u_n\}\\ &=\bigcup_{i=1}^{\infty}\bigcap_{n=i}^{\infty}
    \bigcup_{k=1}^{n} \{X_k> u_n \}
    =\mathcal{H}_{\mathrm{ea}}(\mathbf{B}).
\end{align*}

In this paper, we make several significant improvements on
finding almost sure bounds for the maximum process $M_n$, and
corresponding results for eventual always hitting time statistics
via zero--one laws for the measure of
$\mathcal{H}_{\mathrm{ea}}$. In particular we obtain dichotomy
results consistent with the Robbins--Siegmund criteria described
by Klass. Moreover we exhibit dynamical systems where the
Robbins--Siegmund criteria are not valid, and we propose modified
criteria beyond those stated in \eqref{eq.rs1} and
\eqref{eq.rs2}. We illustrate with a motivating example below.
The main techniques we use are based upon ideas in \emph{extreme
  value theory}, in particular on distributional convergence
results for maxima, \cite{Collet, FFT3, HN, HNT, Letal}. These
methods generally differ to those used in obtaining dynamical
Borel--Cantelli Lemmas alone.

In particular, we establish a dichotomy condition for
$\mathcal{H}_{\mathrm{ea}}$ to be of full or zero measure in
Theorem~\ref{the:lowerbound1} for a class of interval maps. This
is the first result on $\mathcal{H}_{\mathrm{ea}}$ with an exact
dichotomy that we know of (see also \cite[Question 7.1]{Kleinbock}).

\subsubsection*{Failure of Robbins--Siegmund series criterion}
We broadly ascertain that conditions \eqref{eq.rs1} and
\eqref{eq.rs2} are relevant to determine the almost sure growth
bounds for maximum processes as generated from dynamical
systems. However, we illustrate with a simple example to show
that these conditions don't always apply.  Let $(\hat{X}_n)$ be a
i.i.d.\ process with continuous probability distribution function
$F_{\hat{X}}(x)=1-1/x$, with $x\in (0,\infty)$.  For $n\geq 1$
  define a new process $(Y_n)$ by
  $Y_n=\max\{\hat{X}_{n},\hat{X}_{n+1}\}.$ The process $(Y_n)$ is
  correlated only at short time lags, and indeed $Y_n$ is
  independent of $Y_m$ when $|n-m|\geq 2$. Conditions
  \eqref{eq.rs1} and \eqref{eq.rs2} apply to the process
  $(\hat{X}_n)$. By \eqref{eq.rs1} we have for all $c<1$, 
\begin{equation*}
  \mu \Bigl(M^{\hat{X}}_n\geq \frac{cn}{\log\log
    n}\, \text{ev.} \Bigr) = 1,
		\end{equation*}
		while for any $c'>1$, \eqref{eq.rs2} implies that
\begin{equation*}		
\mu  \Bigl(M^{\hat{X}}_n\geq \frac{c'n}{\log\log n}\,\text{ev.}
  \Bigr) = 0.
\end{equation*} 
Here $M^{\hat{X}}_n=\max_{k\leq n}\hat{X}_k$. However, for the
process $(Y_n)$ we get corresponding statements for the maximum
process $M^{Y}_n=\max_{k\leq n}Y_k$ by taking instead $c<1/2$,
and $c'>1/2$. Such a result is inconsistent with conditions
\eqref{eq.rs1} and \eqref{eq.rs2} when applied to the probability
distribution for $Y_n$.  This example is discussed more formally
in Section~\ref{sec.example.dich}. To gain insight into why
conditions \eqref{eq.rs1} and \eqref{eq.rs2} fail for this
example, we appeal to \emph{extreme value theory} (EVT)
surrounding existence of distributional limit laws for maxima. We
next overview this topic.

\subsection{Background on distributional limit laws for extremes}
To obtain distributional limits in EVT, we seek sequences
$a_n,b_n \in \mathbbm{R}$ such that
\begin{equation*} 
  \mu (\{\, x\in\XX : a_n(M_n-b_n) \leq u \,\}) \to G(u),
\end{equation*}
for some non-degenerate distribution function $G(u)$, $-\infty <u
<\infty$. Several results have shown that for sufficiently
hyperbolic systems and for regular enough observables $\phi$
maximized at generic points $\tilde{x}$, the distribution limit
is the same as that which would hold if $\{X_i\}$ were
independent identically distributed (i.i.d.)\ random variables
with the same distribution function as
$\phi$~\cite{FFT1,GHN,HNT,Letal}. Particular cases include laws
towards Poisson type, described as follows. Suppose $\tau>0$, and
let $u_n(\tau)$ be a sequence such that
\begin{equation}\label{eq.un-seq}
  n\mu(X_1>u_n(\tau))\to\tau, \quad n \to \infty.
\end{equation}
Then we say that an extreme value law with extremal index
$\theta\in[0,1]$ holds for $M_n$ if
\begin{equation}\label{eq.Mn-law}
  \mu(M_n\leq u_n(\tau)) \to e^{-\theta\tau}, \qquad n \to
  \infty.
\end{equation}
If $(\hat{X}_n)$ is an i.i.d.\ process, then equation
\eqref{eq.Mn-law} holds for $\theta=1$. Thus a non-trivial extremal
index can only arise for dependent processes. Within EVT and wider
statistical theory of extremes, the index measures the degree of
\emph{clustering} for a time series of maxima, see \cite{LLR, Letal}
for details.  Various methods are available to prove the convergence
results above (in a dynamical systems context).  An important method
is a blocking algorithm approach, where in the context of general stationary stochastic
processes see \cite{Embrechts, LLR}. For dynamical systems, a blocking method approach is described
in \cite{Collet}. To determine almost sure
growth rates of maxima, we adapt the blocking method techniques that
led to the distributional convergence results given by equation
\eqref{eq.Mn-law}. As a naive approach, for a general sequence $u_n$,
equations \eqref{eq.un-seq} and \eqref{eq.Mn-law} lead us to compare
$\mu(M_n\leq u_n)$ with $e^{-n\theta\mu(X_1>u_n)}.$ In the
i.i.d.\ case, we have the exact relation:
\[\mu(M_n\leq u_n)=(1-\mu(X_1>u_n))^n.
\]
The right-hand side term is comparable to $e^{-n\mu(X_1>u_n)}$,
assuming $n\mu(X_1>u_n)^2 \to 0$. Thus, if we chose $u_n$ so that the
right-hand side is summable in $n$, then a First Borel--Cantelli Lemma
implies $\mu(M_n\geq u_n\,\text{ev.})=1$.  Thus, this
relation is not so far from the first half of the Robbins--Siegmund
criterion, namely \eqref{eq.rs1}. However, additional work is required
to get the additional multiplier $\mu(X_1>u_n)$ in \eqref{eq.rs1}. The
second half of the criterion, namely \eqref{eq.rs2} is much more
delicate to obtain, even in the i.i.d.\ case. The issue being that
$\{M_n\leq u_n\}$ is not a sequence of independent events, and hence a
Second Borel--Cantelli Lemma cannot be readily applied to conclude
whether or not $\mu(M_n\leq u_n\,\textrm{i.o.})=1$.

To obtain the relevant criteria \eqref{eq.rs1} and \eqref{eq.rs2} in
the dynamical systems context, we also require a convergence rate in
\eqref{eq.Mn-law}. This applies to the case $\theta=1$ and also for
case $\theta\neq 1$. We will treat these cases separately.  Moreover,
estimation of $\mu(M_n\leq u_n)$ is required for more general
sequences $u_n$ beyond those specified by equation \eqref{eq.un-seq}.

\begin{remark}
  Suppose $(\hat{X}_n)$ is an i.i.d.\ process with continuous
  distribution function $F_{\hat{X}}(x)=1 - 1/x$, with
  $x\in(0,\infty)$.  Then for the process
    $Y_n=\max\{\hat{X}_n,\hat{X}_{n+1}\}$ it can be shown that
    $\theta=1/2$ in \eqref{eq.Mn-law}, see
    Section~\ref{sec.example.dich}.
\end{remark}

This remark suggests that examples for which the
Robbins--Siegmund series criterion fails to apply are indeed
those processes having a non-trivial extremal index
$\theta\in(0,1)$. This discussion is made more rigorous in
Section~\ref{sec.clustering} where we develop modified versions
of \eqref{eq.rs1} and \eqref{eq.rs2} to account for processes
having a non-trivial extremal index.
 
\subsection{Organisation of the paper and overview of results}

A complete theory is yet to be developed regarding dichotomy results
on maxima and on eventually almost hitting time statistics.  We now
give an overview of the main results presented in this paper. In
Section~\ref{sec.dichotomy-pw} we present a dichotomy result for
piecewise expanding interval maps. This is Theorem~\ref{the:pw}, and
the statement is consistent with criteria \eqref{eq.rs1} and
\eqref{eq.rs2}.  As an application we consider the Gau\ss{} map, and
obtain criteria applicable to determining the growth of the
maximum for continued fraction expansion coefficients (associated to
typical real numbers $x\in[0,1]$).

In Section~\ref{sec.results}, we obtain dichotomy results for a
broader class of interval maps, such as those having exponential decay
of correlations in a suitable Banach space of functions. We show that
dichotomy results of type \eqref{eq.rs1} and \eqref{eq.rs2} are
applicable to a broad class of observable functions
$\phi(x)=\psi(\mathrm{dist}(x,\tilde{x}))$ maximised at generic points
$\tilde{x}\in \XX$. This is Theorem~\ref{the:lowerbound1}. Also within
Section~\ref{sec.results}, we consider dynamical systems having weaker
assumptions on the regularity of the invariant measure. For these
systems, we obtain conditions close to the optimal sequences governed
by \eqref{eq.rs1} and \eqref{eq.rs2}.  For example, we show that
$\mu(M_n\leq u_n\,\textrm{i.o.})=0,$ provided $u_n$ satisfies
$\mu(X_1>u_n)>c\log\log n/n$ for some $c>1$, see
Theorem~\ref{the:lowerbound3}. This gives improvements relative to the
methods derived from dynamical Borel--Cantelli Lemma analysis, such as
in \cite{HNT2, KKP}, where they require $u_n$ to satisfy conditions of
the form $\mu(X_1>u_n)>(\log n)^{\beta}/n$, for some $\beta>2$.

In Section~\ref{sec.clustering}, we obtain results that go beyond
what we expect to see for i.i.d.\ processes. For the systems we
consider we propose and apply modified criteria
relative to \eqref{eq.rs1} and \eqref{eq.rs2}. Such criteria
incorporate an extremal index $\theta$.  See
Theorem~\ref{thm.clustering1} for a precise statement. For the
dynamical systems and observables we consider, the mechanisms
leading to a non-trivial extremal index are driven by periodic
behaviour. We show that our conditions are applicable to other
dependent stochastic processes, where the extremal index is
created due to other (clustering) mechanisms. We conjecture that
our conditions are more widely applicable to other dependent
processes.

In Section~\ref{sec.alternative}, we discuss higher dimensional
dynamical systems such as those modelled by Young towers,
\cite{Young}. Again, we obtain criteria towards \eqref{eq.rs1}.
Relative to interval maps, we also need to consider regularity of the
ergodic invariant measure. In general this measure can be singular
with respect to Lebesgue measure. This creates obstacles when trying to
develop and apply a version of e.g.\ \eqref{eq.rs2}. We obtain partial
results, see Theorems~\ref{thm.hyp2} and \ref{thm.intermediate}.

Section~\ref{sec.overview-block} and onwards we devote to the
proofs. In particular for Sections~\ref{sec.overview-block} and
\ref{sec.proof.preliminary} we overview the proof strategy, including
an overview of the blocking argument, such as the one developed in
\cite{Collet}. In the later sections, such as
Section~\ref{sec.A2check} we show that the dynamical assumptions
stated in the main theorems are satisfied for a broad class of
systems.

\section{A dichotomy result for piecewise expanding maps}\label{sec.dichotomy-pw}

Our aim is to recover versions of the Robbins--Siegmund series
critera \eqref{eq.rs1} and \eqref{eq.rs2}, as applied to the
maximum process $M_n=\max_{k\leq n-1}\phi(f^{k})$, where we
consider a measure preserving system $(f,\XX,\mu)$, and $\mu$ is
an ergodic measure. In this section, we will present our results
for piecewise expanding interval maps.
\begin{theorem} \label{the:pw}
  Suppose that $f \colon \XX \to \XX$ is a piecewise expanding
  interval map with an ergodic measure $\mu$ which is absolutely
  continuous with respect to Lebesgue measure.

  Consider the observable $\phi(x) = \psi(\dist(x,\tilde{x}))$
  with $\psi(y)\to\infty$ as $y \to 0$, and a sequence $(r_n)$
  such that $n \mapsto n\mu(B(\tilde{x},r_n))$ is non-decreasing,
  $r_n = O(n^{-\sigma})$ for some $\sigma > \frac{4}{5}$, and
  such that for any $t > 0$ we have
  \begin{equation}\label{eq.mild-rn}
    \limsup_{k \to \infty} \frac{r_{k^t}}{r_{(k+1)^t}} < \infty.
  \end{equation}

  Then, for $\mu$-a.e.\ $\tilde{x}$ we have the following
  dichotomy:
  \begin{enumerate}
  \item\label{rn-ev-1}  If the sequence $(r_n)$ satisfies
    \[
    \sum_{n = 1}^\infty \mu (B(\tilde{x},r_n)) e^{- n \mu
      (B(\tilde{x},r_n))} < \infty,\;\text{with}\; \sum_{n =
      1}^\infty \mu (B(\tilde{x},r_n))=\infty,
    \]
    then
    \[
    \mu (\mathcal{H}_{\mathrm{ea}}) = \mu\bigl( \psi(r_n)\leq M_n\,
    \text{ev.}\bigr) = 1.
    \]  
  \item\label{rn-ev-0} If the sequence $(r_n)$ satisfies
    \[ \sum_{n = 1}^\infty \mu (B(\tilde{x},r_n))
    e^{- n \mu (B(\tilde{x},r_n))} = \infty,
    \]
    then
    \[
    \mu(\mathcal{H}_{\mathrm{ea}})=\mu\bigl( \psi(r_n)\leq M_n\,
    \text{ev.}\bigr) = 0.
    \]
  \end{enumerate}
\end{theorem}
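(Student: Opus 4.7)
The plan is to combine a quantitative extreme value law (EVL) for piecewise expanding interval maps with a subsequence selection argument that bridges the gap between direct Borel--Cantelli and the stronger Robbins--Siegmund series. For $\mu$-a.e.\ base point $\tilde{x}$, the shrinking target $B(\tilde{x},r_n)$ avoids periodic orbits, so the extremal index equals one. A blocking argument in the spirit of \cite{Collet}, exploiting the spectral gap of the transfer operator on $\mathrm{BV}$, should yield a quantitative approximation
\[
\bigl| \mu (M_n \leq \psi(r_n)) - e^{-n \mu (B(\tilde{x},r_n))} \bigr| \leq \varepsilon(n),
\]
with $\varepsilon(n)$ decaying polynomially; the hypothesis $r_n = O(n^{-\sigma})$ with $\sigma > \frac{4}{5}$ is exactly what is needed to make this error strictly smaller than the principal term along the subsequences employed below. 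A two-point version of the same estimate, controlling $\mu(M_n \leq u_n,\,M_{n'} \leq u_{n'})$ with $n<n'$, follows from the same blocking machinery together with exponential decay of correlations on $\mathrm{BV}$.

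For part~\eqref{rn-ev-1} (convergent case), I would pass to a sparse subsequence $n_k = \lfloor k^t \rfloor$ with $t$ large enough that the EVL errors are summable along $(n_k)$. Because $\psi(r_n)$ and $M_n$ are both non-decreasing in $n$, we have the inclusion
\[
\bigcap_{k \geq K}\{\, M_{n_k} > \psi(r_{n_{k+1}})\,\} \subseteq \bigl\{\, M_n > \psi(r_n)\text{ for all }n \geq n_K \,\bigr\},
\]
reducing the problem to showing $\mu(M_{n_k} \leq \psi(r_{n_{k+1}})\text{ i.o.}) = 0$. Hypothesis \eqref{eq.mild-rn} ensures $\mu(B(\tilde{x},r_{n_{k+1}})) \asymp \mu(B(\tilde{x},r_{n_k}))$, so the EVL approximation gives $\mu(M_{n_k} \leq \psi(r_{n_{k+1}})) \lesssim e^{-c\, n_k \mu(B(\tilde{x},r_{n_k}))}$. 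A standard Abel summation, using the monotonicity of $n \mapsto n \mu(B(\tilde{x},r_n))$, converts convergence of the Robbins--Siegmund series into summability of $\sum_k e^{-c\, n_k \mu(B(\tilde{x},r_{n_k}))}$, and the first Borel--Cantelli lemma completes the argument.

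For part~\eqref{rn-ev-0} (divergent case), the goal is $\mu(M_n \leq \psi(r_n)\text{ i.o.}) = 1$. I would work along a subsequence $n_k$ chosen so that $n_k \mu(B(\tilde{x},r_{n_k}))$ takes approximately equally spaced values, converting divergence of the Robbins--Siegmund series into divergence of $\sum_k \mu(M_{n_k} \leq \psi(r_{n_k}))$. The two-point EVL estimate then yields a pairwise near-independence
\[
\mu(M_{n_j} \leq u_{n_j},\,M_{n_k} \leq u_{n_k}) \leq \mu(M_{n_j} \leq u_{n_j})\,\mu(M_{n_k} \leq u_{n_k}) + \text{err},
\]
for $j \ll k$, which feeds into the Kochen--Stone lemma to deliver $\mu(M_{n_k} \leq u_{n_k}\text{ i.o.}) > 0$. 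Ergodicity of $\mu$ upgrades this to probability one, since $\{M_n \leq \psi(r_n)\text{ i.o.}\}$ is $f$-invariant modulo null sets, and this event clearly coincides with $\mathcal{H}_{\mathrm{ea}}^\complement$ up to a null set.

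The principal obstacle is the divergent direction. The two-point EVL error must be strictly smaller than the product of the individual probabilities, even as both of the latter tend to zero; this is a delicate balancing act between the block length used for decorrelation and the rate of decay of $\mu(B(\tilde{x},r_n))$, and it is what dictates the regularity threshold $\sigma > \frac{4}{5}$. Establishing a sufficiently sharp joint EVL, with explicit error bounds uniform over the relevant subsequences and over $\mu$-generic base points $\tilde{x}$, is where the bulk of the technical work will lie.
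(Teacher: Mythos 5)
The overall reduction you identify---an abstract dichotomy under hypotheses (A1)--(A2) proved via a quantitative blocking EVL, then verification of (A2) for $\mu$-a.e.\ $\tilde{x}$ for piecewise expanding maps---is exactly the paper's structure (Theorem~\ref{the:lowerbound1} plus Proposition~\ref{prop:A2forpwexp}). Your part~\eqref{rn-ev-0} outline (second Borel--Cantelli/Kochen--Stone along a carefully chosen subsequence, two-point estimate via decay of correlations, ergodicity to upgrade to probability one) is also essentially the route the paper takes, though the paper uses the specific subsequence $a_n=e^{\lambda n/\log n}$ following Galambos and a Chung--Erd\H{o}s type argument rather than a subsequence along which $n_k\mu(B_{n_k})$ is equally spaced; whether your choice yields bounds of the requisite sharpness would need to be checked, but the idea is right.

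However, your part~\eqref{rn-ev-1} has a genuine gap. You reduce to showing $\mu(M_{n_k}\leq\psi(r_{n_{k+1}})\text{ i.o.})=0$ along a sparse subsequence $n_k=\lfloor k^t\rfloor$ and then invoke first Borel--Cantelli after ``Abel summation,'' claiming convergence of $\sum_n\mu(B_n)e^{-n\mu(B_n)}$ gives summability of $\sum_k e^{-cn_k\mu(B_{n_k})}$. This conversion fails, and it fails precisely at the borderline that the sharp dichotomy is supposed to capture. Take $n\mu(B_n)=\log\log n+C\log\log\log n$, so the Robbins--Siegmund series converges iff $C>2$. Along $n_k=k^t$ one has $n_k\mu(B_{n_k})\approx\log\log k+C\log\log\log k$, hence $e^{-n_k\mu(B_{n_k})}\asymp\bigl(\log k\,(\log\log k)^C\bigr)^{-1}$, whose sum over $k$ diverges for \emph{every} $C$; replacing $n_k$ by a geometric subsequence $[a^k]$ gives $e^{-n_k\mu(B_{n_{k+1}})}\asymp k^{-1/a}(\log k)^{-C/a}$, which still diverges once $a>1$. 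The subsequence-plus-first-Borel--Cantelli route is the one the paper uses for Theorem~\ref{the:lowerbound3}, and it only yields the weaker conclusion $\mu(M_n\geq\psi(r_{[n/a]})\text{ ev.})=1$, i.e.\ with a loss of a multiplicative constant in the shrinking rate, not the exact Robbins--Siegmund threshold.

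The missing idea is the Galambos-type decomposition that the paper employs in Section~\ref{sec.pf.main-case1}: since $\sum_n\mu(B_n)=\infty$ forces $\mu(M_n<u_n\text{ ev.})=0$, one has
\[
\{M_n<u_n\text{ i.o.}\}=\{M_n<u_n\text{ and }X_{n+1}\geq u_{n+1}\text{ i.o.}\}
\]
modulo a null set, and it is the extra indicator on $\{X_{n+1}\geq u_{n+1}\}$ that supplies the multiplier $\mu(B_n)$ appearing in the Robbins--Siegmund series. One then decouples $\{M_l<u_n\}$ from $\{X_{n+1}\geq u_{n+1}\}$ with $l=n-[n^\beta]$ (a gap of size $n^\beta$, not a sparse subsequence), using the transfer-operator estimate of Lemma~\ref{lem:gausssplit} rather than a crude $\mathrm{BV}$-norm bound, and applies first Borel--Cantelli to $\sum_n\mu(M_n<u_n,\,X_{n+1}\geq u_{n+1})\lesssim\sum_n\mu(B_n)e^{-n\mu(B_n)}$. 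Without this factorization your proof of part~\eqref{rn-ev-1} cannot reach the stated dichotomy.
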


\begin{remark}
  In case \eqref{rn-ev-1}, the divergence condition $\sum_{n =
    1}^\infty \mu (B(\tilde{x},r_n))=\infty$ is required, since
  the first summability constraint can be true without this
  assumption.  In case \eqref{rn-ev-0} we remark that the stated
  divergence condition implies $\sum_{n = 1}^\infty \mu
  (B(\tilde{x},r_n))=\infty$.  Hence we also have
  \[
  \mu\bigl( \psi(r_n)\leq M_n \text{ i.o.}\bigr) = \mu\bigl( \psi(r_n)\geq
  M_n \text{ i.o.}\bigr) = 1,
  \]
  provided that $\sum_{n = 1}^\infty \mu (B(\tilde{x},r_n)) e^{-
    n \mu (B(\tilde{x},r_n))} =\infty$.
\end{remark}

\begin{remark}
  As stated, the set of points $\tilde{x}\in\XX$ for which (1) and (2)
  hold has full $\mu$-measure. In general, it is not straightforward
  to know whether a particular $\tilde{x}$ is in this set. As we
  discuss in Section~\ref{sec.clustering} periodic points are not in
  this full measure set. For such periodic points alternative
  formulations of (1) and (2) are required.
\end{remark}

Theorem~\ref{the:pw} is a consequence of Theorem~\ref{the:lowerbound1}
in the next section, combined with
Proposition~\ref{prop:A2forpwexp}. We remark that the condition
\eqref{eq.mild-rn} is quite a mild condition on the sequence $r_n$. If
$r_n$ is regularly varying, (in the sense of \cite{BGT}), then it will
satisfy \eqref{eq.mild-rn}. Certain sequences with fast decay (such as
exponential) violate \eqref{eq.mild-rn}, but this becomes a moot issue
since we assume $\sum_n\mu (B(\tilde{x},r_n))=\infty$. Therefore $r_n$
cannot decay too quickly, unless the measure density is quite
degenerate at $\tilde{x}$.  For i.i.d.\ processes, and depending on
the criteria being used, mild regularity constraints are also
discussed (and imposed) in \cite{Barndorff,Klass1,Klass2}. The latter
reference gives the most freedom on the allowed sequences, as we have
already summarised in Section~\ref{sec.introduction}.

In the next section we present several results of this type which hold
under various more or less abstract assumptions on the systems. For
piecewise expanding systems, Proposition~\ref{prop:A2forpwexp} tells
us that these assumptions are satisfied for a.e.~ $\tilde{x}$. The
restriction $\sigma>4/5$ is a consequence of the methods of proof. We
have not tried to optimise this range on $\sigma$.
For completeness, upper bounds on the growth of maxima can be deduced from the following
theorem.
\begin{theorem} \label{the:upperbound}
  Suppose that $f\colon \XX \to \XX$ is a dynamical system with
  an ergodic probability measure $\mu$.
  If $(r_n)$ is a sequence such that
  \[
  \sum_{n = 1}^\infty \mu (B(\tilde{x},r_n)) < \infty,
  \]
  then
  \[
  \mu\bigl( \psi(r_n) \geq M_n \text{ ev.}\bigr) = 1.
  \]
\end{theorem}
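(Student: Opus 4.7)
The plan is to apply the First Borel--Cantelli Lemma to the one-step events $A_n := \{X_n > u_n\}$ with $u_n := \psi(r_n)$, and then bootstrap from ``$X_n \leq u_n$ eventually'' to ``$M_n \leq u_n$ eventually'' using monotonicity of $u_n$.

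First I would identify the event. Since $\psi$ is decreasing, $X_n(x) > u_n$ holds iff $\dist(f^{n-1}x,\tilde{x}) < r_n$, so $A_n = f^{-(n-1)}(B(\tilde{x},r_n))$. The $f$-invariance of $\mu$ gives $\mu(A_n) = \mu(B(\tilde{x},r_n))$, and the summability hypothesis combined with the First Borel--Cantelli Lemma yields $\mu(A_n \text{ i.o.}) = 0$. Equivalently, for $\mu$-a.e.\ $x$ there exists $N(x)$ with $X_n(x) \leq u_n$ for every $n \geq N(x)$.

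The second step transfers this to $M_n$, using the standing convention from the introduction that $u_n$ is non-decreasing. For $N(x) \leq k \leq n$, monotonicity gives $X_k(x) \leq u_k \leq u_n$, hence $\max_{N(x) \leq k \leq n} X_k(x) \leq u_n$. To absorb the finite prefix $X_1(x),\dots,X_{N(x)-1}(x)$ I need two auxiliary facts. First, $\mu(\{\tilde{x}\}) = 0$, for otherwise every term of the sum would be bounded below by $\mu(\{\tilde{x}\}) > 0$; by $f$-invariance this forces $f^{k-1}x \neq \tilde{x}$ for every $k \geq 1$ for $\mu$-a.e.\ $x$, so each $X_k(x)$ is finite. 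Second, $u_n \to \infty$: otherwise $u_n \leq U$ for some $U$, whence the summability of $\mu(X_1 > u_n)$ forces $\mu(X_1 > U) = 0$ and the conclusion becomes trivial. Choosing $N'(x) \geq N(x)$ with $u_{N'(x)} \geq \max_{k < N(x)} X_k(x)$ then gives $M_n(x) \leq u_n$ for all $n \geq N'(x)$, as required.

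There is no genuine obstacle here: the argument uses no dynamics beyond $f$-invariance of $\mu$, which is precisely why the upper bound holds under such minimal hypotheses. The only care needed is the bookkeeping in the transfer from $X_n$ to $M_n$, which rests solely on the (conventional) monotonicity of $u_n$ and the observation that $u_n \to \infty$ in any non-trivial case.
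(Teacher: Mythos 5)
Your proof is correct and follows the same route as the paper: the First Borel--Cantelli Lemma applied to the events $\{X_n > \psi(r_n)\}$, whose measures are $\mu(B(\tilde{x},r_n))$ by $f$-invariance, followed by the transfer from ``$X_n \leq u_n$ eventually'' to ``$M_n \leq u_n$ eventually.'' The paper dispatches this in two sentences, leaving the transfer step implicit; you have made explicit the ingredients it silently uses (monotonicity of $u_n$, $u_n \to \infty$ to absorb the finite prefix, and $\mu(\{\tilde{x}\})=0$ so that each $X_k(x)$ is a.s.\ finite), and those checks are all sound.
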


\begin{proof}
  Since $\mu (B(\tilde{x},r_n))$ is summable, we get by the First
  Borel--Cantelli Lemma that almost surely, the event $\{ X_n
  \geq \psi (r_n) \}$ happens only finitely many times. It
  follows that almost surely, $M_n \leq \psi(r_n)$ holds for all
  large enough $n$.
\end{proof}

\subsubsection*{The Gau\ss{} Map and growth of continued
  fractions.}

We end this section with an important application of
Theorem~\ref{the:pw}, namely to the Gau\ss{} map. This allows us
to obtain a dichotomy result on the almost sure growth rates of
continued fraction expansion coefficients. Recall that for a
number $x\in[0,1]$, its continued fraction is given by
$x=[a_{0},a_1,a_2,\ldots]$, where
\[
a_k(x) = \biggl\lfloor \frac{1}{G^{k}(x)} \biggr\rfloor, \qquad
\text{and} \qquad G(x) = \frac{1}{x}\mod 1.
\]
The map $G \colon [0,1]\to[0,1]$ (with $G(0)=0$) is the
\emph{Gau\ss{} map}. The map is piecewise expanding, full branch,
with countable Markov partition. The map admits an ergodic
measure $\mu$ with invariant density $\rho(x)=(\log
2)^{-1}(1+x)^{-1}.$ In Philipp \cite[Theorem~1]{Philipp} it is
shown that for $\mu$-almost all $x$
\begin{equation}\label{eq.cf}
  \liminf_{n\to\infty} n^{-1}L_n(x)\log\log n=\frac{1}{\log 2},
\end{equation}
where $L_n(x)=\max_{i\leq n}a_i(x)$. In Corollary~\ref{cor.gauss}
below we obtain a result commensurate with that of Philipp's
dichotomy result \cite[Theorem~2]{Philipp}, which allows us to
obtain higher order terms in the convergence rate to the
limit. Philipp's dichotomy result naturally builds upon the
earlier works of Barndorff-Neilson~\cite{Barndorff} as applied in
the i.i.d.\ case.  In the recent work of \cite{Kleinbock} they
also obtain estimates which lead to the result of \eqref{eq.cf},
but they don't obtain a sharp dichotomy criterion.

\begin{corollary}\label{cor.gauss}
  Suppose that $G\colon [0,1] \to [0,1]$ is the Gau\ss{} map, and
  $\mu$ the ergodic absolutely continuous invariant
  measure. Consider the observable $\phi(x) =
  \psi(\dist(x,\tilde{x}))$ with $\psi(y)\to\infty$ as $y \to 0$,
  and a sequence $(r_n)$ such that $n \mapsto
  n\mu(B(\tilde{x},r_n))$ is non-decreasing, $r_n =
  O(n^{-\sigma})$ for some $\sigma > \frac{4}{5}$, and satisfying
  \eqref{eq.mild-rn}.  Then for $\mu$-a.e.\ $\tilde{x}$ cases
  \eqref{rn-ev-1} and \eqref{rn-ev-0} of Theorem~\ref{the:pw}
  apply.
\end{corollary}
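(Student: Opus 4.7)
The plan is to obtain Corollary~\ref{cor.gauss} as a direct specialisation of Theorem~\ref{the:pw}: all that needs to be shown is that the Gau\ss{} map $G$ together with its absolutely continuous invariant measure $\mu$ fits into the framework for which Theorem~\ref{the:pw} (equivalently, Theorem~\ref{the:lowerbound1} via Proposition~\ref{prop:A2forpwexp}) has already been established. Since the hypotheses on the sequence $(r_n)$ and on the observable $\phi$ in the corollary are copied verbatim from Theorem~\ref{the:pw}, the only thing to verify is that $G$ satisfies the structural ``piecewise expanding'' hypothesis of that theorem.

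To carry this out, I would first recall the standard facts: $G$ has countably many full monotone branches on the intervals $(1/(k+1),1/k)$, on each branch it is real-analytic with $|G'|\geq 1$ (and uniformly $>1$ away from the parabolic-like behaviour at $0$), and the Perron--Frobenius/transfer operator acts quasi-compactly on the space of functions of bounded variation. In particular, $\mu$ is the unique absolutely continuous invariant probability measure, with density $\rho(x) = (\log 2)^{-1}(1+x)^{-1}$ that is bounded above and bounded away from $0$ on $[0,1]$. Consequently $\mu$ is equivalent to Lebesgue, so the ``$\mu$-a.e.\ $\tilde{x}$'' conclusion of Theorem~\ref{the:pw} coincides with a Lebesgue-a.e.\ statement, and the constraint $r_n = O(n^{-\sigma})$ together with \eqref{eq.mild-rn} passes through unchanged.

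Next, I would check that the dynamical input needed for Proposition~\ref{prop:A2forpwexp} (exponential decay of correlations against BV observables, a spectral-gap estimate for the transfer operator, and the uniform quasi-invariance bounds used in the blocking argument of Section~\ref{sec.overview-block}) is available for $G$. This is classical: Rychlik-type theorems extend the Lasota--Yorke approach to the Gau\ss{} map despite the countable branch structure and the unbounded derivative at $0$, and yield the same conclusions about the transfer operator that are used in the piecewise expanding case treated in Proposition~\ref{prop:A2forpwexp}. With these estimates in hand, Proposition~\ref{prop:A2forpwexp} applies verbatim and gives the assumptions needed by Theorem~\ref{the:lowerbound1}.

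The main obstacle is therefore bookkeeping rather than conceptual: one has to confirm that the ``piecewise expanding'' setup under which Proposition~\ref{prop:A2forpwexp} is proved indeed covers the countably piecewise expanding Gau\ss{} map, rather than being restricted to maps with finitely many branches and bounded distortion uniformly up to the boundary. The only delicate point is the neighbourhood of the indifferent behaviour near $x=0$, but because $\rho$ remains bounded there and the induced dynamics is uniformly expanding, the estimates used in the blocking decomposition are not affected. Once this is recorded, the dichotomy cases \eqref{rn-ev-1} and \eqref{rn-ev-0} follow immediately from Theorem~\ref{the:pw}.
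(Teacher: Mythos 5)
Your strategy matches the paper's: Corollary~\ref{cor.gauss} is obtained by noting that the Gau\ss{} map $G$ with the Gau\ss{} measure $\mu$ falls under the hypotheses of Proposition~\ref{prop:A2forpwexp} (whose countable-branch variant explicitly lists $G$ as a covered example), so that (A2) holds for $\mu$-a.e.\ $\tilde{x}$; together with (A1), the BV density, and the spectral gap for the transfer operator --- all classical for $G$ via Rychlik --- Theorem~\ref{the:lowerbound1} and hence Theorem~\ref{the:pw} apply verbatim.

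One factual misstatement should be corrected, since it could mislead a reader checking the argument. You attribute to $G$ ``parabolic-like'' and ``indifferent'' behaviour at $x=0$ and appeal to ``induced dynamics''. In fact $G(x) = 1/x \bmod 1$ has $|G'(x)| = 1/x^2 \to \infty$ as $x \to 0^+$, and $G$ has no neutral fixed point; its fixed points are strictly repelling. The only place where $|G'|$ is not uniformly bounded above $1$ is near $x=1$, where $|G'(1)| = 1$; but $G(1) = 0$, so $G^2$ is uniformly expanding there and no inducing is required. (It is the Manneville--Pomeau map of Section~\ref{sec.mp-map} that has a genuinely neutral fixed point at $0$ and is handled by inducing; moreover that map has an \emph{unbounded} density near $0$, unlike the Gau\ss{} density, which is bounded.) The relevant checks for $G$ when invoking Proposition~\ref{prop:A2forpwexp} are the countable Markov structure --- handled by the assumption $|f^n(I)| \geq \delta_0$ for cylinders $I \in \mathscr{P}_n$, which holds with $\delta_0 = 1$ since $G$ is full-branch --- and the bound $\mu(I) \leq c|I|$, which holds because $\rho(x) = (\log 2)^{-1}(1+x)^{-1}$ is bounded above. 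Your conclusion that nothing obstructs Proposition~\ref{prop:A2forpwexp} is correct, but for these reasons rather than the ones you cite.
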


To relate this corollary to continued fractions, we take the
observable $\psi(x)=\lfloor 1/x\rfloor$ with $\tilde{x}=0$ so that
$a_k(x)=\psi(G^k(x))$.  One then attempts to apply
Theorem~\ref{the:pw}, but in order to do so we need to know that we
can use the theorem for $\tilde{x} = 0$. Instead we use
Theorem~\ref{the:lowerbound1}, and we need only to check that
condition (A2) (see Section~\ref{sec.results}) is satisfied for
$\tilde{x} = 0$. To do so is standard, and is left out. See also
\cite{GKR}. The results of Philipp are recovered from
Corollary~\ref{cor.gauss} by first noting that
\[
\mu ([0,r_n))=\frac{1}{\log
  2}\log(1+r_n)=\frac{r_n}{\log 2}+O(r^{2}_n).
\]
Then consider each case (1) and (2) in the corollary using
$r_n=c\log\log n/n$ for the (one-sided) ball $[0,r_n]$, and taking in
turn $c>\log 2$, followed by $c<\log 2$.  When $c=\log 2$, further
(additive) error term refinements can be obtained.

\section{Towards a dichotomy result for the almost sure growth of
  maxima}\label{sec.results}

Our aim is to recover versions of the Robbins--Siegmund series
critera \eqref{eq.rs1} and \eqref{eq.rs2}, as applied to the maximum
process $M_n=\max_{k\leq n-1}\phi(f^{k})$, where we consider a
measure preserving system $(f,\XX,\mu)$, and $\mu$ is an ergodic
measure. The systems we consider include those that can be
modelled by a Young tower \cite{Young}, but in the statement of
our results we just require control on the rate of decay of
correlations.  We make these statements precise as follows.

\begin{definition}\label{def.dc}
  We say that $(f,\XX,\mu)$ has decay of correlations in (Banach
  spaces) $\mathcal{B}_1$ versus $\mathcal{B}_2$, with rate
  function $\Theta(j)\to 0$ if for all
  $\varphi_1\in\mathcal{B}_1$ and $\varphi_2\in\mathcal{B}_2$ we
  have
  \[
  \mathcal{C}_j(\varphi_1,\varphi_2,\mu):= \biggl|\int \varphi_1
  \cdot \varphi_2\circ f^j \, \mathrm{d} \mu -\int \varphi_1 \,
  \mathrm{d} \mu \int \varphi_2 \, \mathrm{d} \mu \biggr| \leq
  \Theta(j) \|\varphi_1\|_{\mathcal{B}_1}
  \|\varphi_2\|_{\mathcal{B}_2},
  \]
  where $\|\cdot\|_{\mathcal{B}_i}$ denote the corresponding
  norms on the Banach spaces.
\end{definition}

In particular, we consider the $L^1$ and $\mathrm{BV}$ norms of
functions $\varphi \colon \XX \subset \mathbbm{R} \to
\mathbbm{R}$, defined by
\begin{align*}
  \|\varphi \|_1 & = \int |\varphi| \, \mathrm{d}\mu,
  \\ \|\varphi\|_{\mathrm{BV}} & = \var(\varphi) +
  \sup(|\varphi|),
\end{align*}
where $\var(\varphi)$ denotes the total variation of
$\varphi$. Functions $\varphi \colon \XX \subset \mathbbm{R} \to
\mathbbm{R}$ with $\|\varphi\|_{\mathrm{BV}}<\infty$ are called
functions of bounded variation.

The first main assumption is the following.
\begin{enumerate}
\item[(A1)] \textbf{Exponential decay of correlations} with respect to
  notations of Definition~\ref{def.dc}. We assume that
  $(f,\XX,\mu)$ has exponential decay of correlations in Banach spaces
  $\mathcal{B}_1=\mathrm{BV}$ versus $\mathcal{B}_2=L^{\infty}$.
\end{enumerate}
Next, we define for a sequence $r_n\to 0$ and integer
$p\in\mathbb{N}$ the following quantity
\begin{equation}
  \Xi_{p,n}\equiv\Xi_{p,n}(r_n):=\sum_{j=1}^{p}\mu\bigl(
  B(\tilde{x},r_n)\cap f^{-j} B(\tilde{x},r_n)\bigr).
\end{equation}

Our second important assumption is the following.
\begin{enumerate}
\item[(A2)] \textbf{Short Return Times estimate.} Let
  $s,\gamma\in(0,1)$, and suppose $\gamma+s<1$. Furthermore, suppose
  that $B(\tilde{x},r_n)$ is a sequence of balls with $\sum_n \mu
  (B(\tilde{x},r_n))=\infty$ and $\mu
  (B(\tilde{x},r_n))=O(n^{-\sigma})$ for some $\sigma\in(0,1)$
  satisfying
	\begin{equation}\label{eq.short-constants}
	\sigma> \max \bigl\{\frac{1}{2}(1+\gamma+s),1-\gamma,\frac{1}{3}(2+s),1-\frac{s}{2}\bigr\}.
	\end{equation}
	Then along the sequence $p_n=n^s$, we have
  \begin{equation}\label{eq.short1}
    \Xi_{p_n,n}(r_n)=O\bigl(\frac{1}{n^{1+\gamma}}\bigr).
  \end{equation}
\end{enumerate}

Condition (A1) is known to hold for a wide class of dynamical systems,
such as uniformly expanding maps \cite{LasotaYorke, Liveranietal}, the
Gau\ss{} map \cite{Philipp, Rychlik}, and also certain non-uniformly
expanding quadratic maps \cite{Young_quadratic}. In other
applications, modified versions of (A1) include taking $\mathcal{B}_1$
as the space of H\"older continuous functions (maintaining
$\mathcal{B}_2 = L^{\infty}$). We will do this on a case-by-case
basis.
	
Condition (A2) gives a restriction on the recurrence properties of
$\tilde{x}\in\XX$, and for a broad class of systems, this condition
can be proved to hold for $\mu$-a.e.\ $\tilde{x} \in \XX$ along the
lines of \cite{Collet, HNT, HRS}. For readers familiar with extreme
value theory, equation \eqref{eq.short1} is similar to the $D'(u_n)$
condition considered in \cite{Embrechts, LLR, Letal}, where $u_n$ plays
the role of $\psi(r_n)$. Some of the restrictions on the constants
within equation \eqref{eq.short-constants} arise through requiring
self-consistency of equation \eqref{eq.short1}. Indeed, by exponential
decay of correlations (A1) we have $\mu( B(\tilde{x},r_n)\cap f^{-k}
B(\tilde{x},r_n))\approx \mu(B(\tilde{x},r_n))^2$ for $k\gg\log n$.
Hence, the bound in equation \eqref{eq.short1} forces
$2\sigma-s>1+\gamma$. In Section~\ref{sec.proof.preliminary} we
discuss the role for the other lower bounds on $\sigma$ within
\eqref{eq.short-constants}. Thus, condition (A2) mainly applies to
control the measure $\mu( B(\tilde{x},r_n)\cap f^{-k}
B(\tilde{x},r_n))$ for $k=O(\log n)$, i.e.\ for short return times.
In turn this condition is a restriction on the recurrence statistics
of $\tilde{x}$ and nearby points. In Section~\ref{sec.A2check} we
provide general techniques to verify (A2). For a wide class of
dynamical systems we show that (A2) holds for $\mu$-a.e.\ $x\in
\XX$. The techniques we discuss build upon and complement arguments
used in Collet~\cite{Collet}. Examples include piecewise expanding
maps with absolutely continuous invariant measures or more general
Gibbs measures, and quadratic maps with Benedicks--Carleson
parameters. For the latter, (A1) is also satisfied by Young
\cite{Young_quadratic}.

As stated, Condition (A2) is mainly applicable for systems satisfying
(A1). For systems with polynomial decay of correlations, see
\cite{HNT,HRS} for conditions similar to (A2). In this article, we
consider mainly systems with exponential decay of correlation.  The
exception is the Manneville--Pomeau map which we treat in
Section~\ref{sec.mp-map}. We remark further that the exceptional set
of points $\tilde{x}$ for which (A2) fails includes periodic
points. We discuss this further in Section~\ref{sec.clustering}.

To state the next theorem we consider an interval map $f$ and we
recall that a measure $\nu$ is called conformal for the
non-negative function $g \colon \XX \to \mathbbm{R}^+$ if for
every measurable set $E\in \mathcal{B}$, on which $f$ acts as a
measurable isomorphism, we have
\[
\nu(f(E)) = \int_E g \, \mathrm{d}\nu.
\]
One often refers to $\log g$ as a potential. Furthermore, the
transfer operator $\mathscr{L} \colon L^1(\XX,\nu) \to
L^1(\XX,\nu)$ (sometimes called Ruelle operator or
Perron--Frobenius operator) is defined by
\[
\mathscr{L} \psi (x) = \sum_{f(y) = x} g (y) \psi(y).
\]
We often restrict $\mathscr{L}$ to the functions of bounded
variation.

\begin{theorem} \label{the:lowerbound1}
  Suppose that $f\colon \XX \to \XX$ is an interval map with an
  ergodic probability measure $\mu$. Assume that condition (A1)
  holds. Furthermore, assume that $\mu$ is a Gibbs measure which has a
  density $h$ with respect to a conformal measure, that $h$ is an
  eigenfunction of a transfer operator with a spectral gap when acting
  on function of bounded variation, and that $h$ is the unique (up to
  scalar multiples) eigenfunction of maximal modulus of the
  eigenvalue.  Consider the observable $\phi(x) =
  \psi(\dist(x,\tilde{x}))$ with $\psi(y) \to \infty$ as $y \to 0$,
  and suppose that condition (A2) holds for the sequence of balls
  $\{B(\tilde{x},r_n)\}$, $r_n\to 0$ centered at $\tilde{x}$. Moreover,
	suppose the sequence $(r_n)$ is such that $n\mapsto
  n\mu(B(\tilde{x},r_n))$ is non-decreasing. Then we
  have the following dichotomy:
\begin{enumerate}
\item  If the sequence $(r_n)$ satisfies
  \[
   \sum_{n = 1}^\infty \mu (B(\tilde{x},r_n))
  e^{- n \mu (B(\tilde{x},r_n))} < \infty,
  \]
  then
  \[
  \mu (\mathcal{H}_{\mathrm{ea}}) = \mu\bigl( \psi(r_n)\leq M_n\,
  \text{ev.}\bigr) = 1.
  \]  
\item If the sequence $(r_n)$ satisfies
  \[ \sum_{n = 1}^\infty \mu (B(\tilde{x},r_n))
  e^{- n \mu (B(\tilde{x},r_n))} = \infty,
  \]
  then
  \[
  \mu(\mathcal{H}_{\mathrm{ea}})=\mu\bigl( \psi(r_n)\leq M_n\,
  \text{ev.}\bigr) = 0.
  \]
\end{enumerate}
\end{theorem}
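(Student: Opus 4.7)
The plan is to deduce the dichotomy from a quantitative version of the extreme value law
\[
\mu(M_n\leq\psi(r_n))=e^{-n\mu(B(\tilde{x},r_n))}\bigl(1+o(1)\bigr),
\]
established via Collet's blocking method under (A1) and (A2), combined with Borel--Cantelli-type arguments applied along a Klass-style subsequence adapted to the monotonicity of $n\mapsto n\mu(B(\tilde{x},r_n))$. The Gibbs / spectral gap hypothesis on $h$ enters only to guarantee that the relevant extremal index is $\theta=1$, so that no clustering correction modifies the exponent.

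\textbf{Quantitative law via blocking.} Following Collet, I would partition $\{0,\ldots,n-1\}$ into $k_n\asymp n/p_n$ alternating long blocks of length $p_n=n^{s}$ and short gaps of length $q_n$ of order $\log n$. The gap size is chosen so that (A1), applied to indicators of $\{M_{p_n}\leq\psi(r_n)\}$ (of bounded variation) against $L^{\infty}$ tail factors, decouples consecutive blocks with error summable in $n$; simultaneously (A2) bounds the short-return contribution $\Xi_{p_n,n}$ inside each block, so that $\mu(M_{p_n}>\psi(r_n))=p_n\mu(B(\tilde{x},r_n))+O(n^{-1-\gamma})$. Taking a product and exponentiating then yields the displayed quantitative law with an error whose logarithm is of smaller order than $n\mu(B(\tilde x,r_n))$. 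Uniqueness of the maximal eigenfunction of the transfer operator $\mathscr{L}$ prevents the periodic-point mechanism that produces a non-trivial extremal index (developed in Section~\ref{sec.clustering}), giving $\theta=1$.

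\textbf{Part (1) via a Klass subsequence and first Borel--Cantelli.} Writing $u_n=\psi(r_n)$ and defining $n_k=\min\{n:n\mu(B(\tilde{x},r_n))\geq k\}$, the monotonicity hypothesis makes the subsequence well-defined with $n_k\mu(B(\tilde{x},r_{n_k}))\in[k,k+1)$. The elementary inclusion
\[
\bigl\{\exists\, n\in(n_{k-1},n_k]: M_n\leq u_n\bigr\}\subseteq\bigl\{M_{n_{k-1}}\leq u_{n_k}\bigr\},
\]
which uses that $M_n$ and $u_n$ are both non-decreasing in $n$, reduces the problem to summability of $\mu(M_{n_{k-1}}\leq u_{n_k})$ along the subsequence. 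A summation-by-parts argument identifies the hypothesised sum $\sum_n\mu(B(\tilde{x},r_n))e^{-n\mu(B(\tilde{x},r_n))}$ with (a bounded multiple of) $\sum_k e^{-n_{k-1}\mu(B(\tilde{x},r_{n_k}))}$; combined with the quantitative law this yields summability and hence $\mu(M_n\leq u_n \text{ i.o.})=0$, i.e.\ $\mu(\mathcal{H}_{\mathrm{ea}})=1$.

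\textbf{Part (2) and the main obstacle.} In the divergent case, since the event $\{M_n\leq u_n\text{ i.o.}\}$ is $f$-invariant modulo null sets, ergodicity reduces the problem to showing it has positive measure. I would choose a sparser subsequence $(n_k)$ (with $n_{k+1}/n_k$ large) along which the divergence of the hypothesised series still yields $\sum_k\mu(M_{n_k}\leq u_{n_k})=\infty$. For $k<l$ the decomposition
\[
\{M_{n_k}\leq u_{n_k}\}\cap\{M_{n_l}\leq u_{n_l}\}=\{M_{n_k}\leq u_{n_k}\}\cap\bigl\{\max_{n_k<i\leq n_l}X_i\leq u_{n_l}\bigr\}
\]
(valid because $u_{n_k}\leq u_{n_l}$) combined with the insertion of a logarithmic gap and (A1) gives the quasi-independence bound $\mu(A_k\cap A_l)\lesssim\mu(A_k)\mu(A_l)$, after which Kochen--Stone yields $\mu(A_k\text{ i.o.})>0$ and ergodicity upgrades this to full measure. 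The hardest point is precisely this joint estimate: one must make the Kochen--Stone ratio $(\sum_{k\leq N}\mu(A_k))^2/\sum_{k,l\leq N}\mu(A_k\cap A_l)$ bounded away from zero, which requires the decoupling error in (A1) to be quantitatively dominated by the main term $\mu(A_k)\mu(A_l)$ uniformly in $k,l$. This is the mechanism behind the lower bounds on $\sigma$ in \eqref{eq.short-constants} and behind the preference for exponential (rather than polynomial) decay of correlations; verifying it in the required uniform form is the main technical obstacle of the proof.
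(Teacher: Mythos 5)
Your Part (1) argument fails at the subsequence step. In the Robbins--Siegmund-critical regime $n\mu(B(\tilde{x},r_n))\sim c\log\log n$, the subsequence $n_k=\min\{n:n\mu(B(\tilde{x},r_n))\geq k\}$ grows doubly exponentially in $k$, so $n_{k-1}/n_k\to 0$ super-exponentially, hence $n_{k-1}\mu(B(\tilde{x},r_{n_k}))\to 0$ and $\mu(M_{n_{k-1}}\leq u_{n_k})\to 1$. The series you propose to sum therefore diverges regardless of the convergence hypothesis, and the summation-by-parts identification you invoke cannot hold. Subsequence (Cauchy-condensation) arguments along a geometric $n_k=[a^k]$ do appear in the paper, but only to prove the weaker Theorem~\ref{the:lowerbound3}, which loses a factor $a>1$ in the conclusion. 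The exact dichotomy in Case (1) is obtained instead from the Galambos identity $\{M_n<u_n \text{ i.o.}\}=\{M_n<u_n\text{ and }X_{n+1}\geq u_{n+1}\text{ i.o.}\}$ (modulo a $\mu$-null set) together with a direct First Borel--Cantelli bound on $\sum_n\mu(M_n<u_n\text{ and }X_{n+1}\geq u_{n+1})$: the extra factor $\mu(X_1>u_n)$ produced by the splitting is precisely what turns this into the Robbins--Siegmund series. You have also misattributed the role of the Gibbs/spectral-gap hypothesis: it is not about forcing $\theta=1$. It is needed because $\lVert\mathbbm{1}_{M_l<u}\rVert_{\mathrm{BV}}$ is of order $K^l$, and Lemma~\ref{lem:gausssplit} circumvents this by applying $\mathscr{L}^l$ to $(\mathbbm{1}_{M_l<u}-c)h$ with $h\in\mathrm{BV}$, yielding $\lVert\mathscr{L}^l((\mathbbm{1}_{M_l<u}-c)h)\rVert_{\mathrm{BV}}\leq C\lambda^l l$, i.e.\ polynomial rather than exponential growth. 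That $\theta=1$ is a consequence of (A2); Theorem~\ref{thm.clustering1} imposes the identical Gibbs hypothesis and obtains a nontrivial extremal index.

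For Part (2), quasi-independence plus a second-moment argument is the right outline, but the sparsity is the wrong way around. The paper uses the Galambos sequence $a_n=e^{\lambda n/\log n}$, which is sub-exponential with $a_{n+1}/a_n\to 1$: it is dense enough that divergence of $\sum_n\mu(M_{a_n}\leq v_{a_n})$ follows from divergence of the Robbins--Siegmund series (via the property \eqref{eq.MM7}), yet the gaps $a_t-a_n$ grow fast enough that Lemma~\ref{lem:gausssplit} controls the cross terms in \eqref{eq.MM3}. Choosing $n_{k+1}/n_k$ large would destroy the divergence of $\sum_k\mu(A_k)$ in the borderline regime and render the Kochen--Stone ratio useless; the careful balance of this subsequence, not the decoupling estimate, is the actual crux of Case (2).
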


\begin{remark}
  Liverani, Saussol and Vaienti \cite{Liveranietal} studied a general
  class of piecewise expanding interval maps and a Gibbs measure $\mu$
  with respect to a potential. They proved that under some mild
  regularity conditions and a ``covering'' condition, that the
  transfer operator related to the potential has a spectral gap and a
  unique (up to scaling) eigenfunction associated to the eigenvalue of
  maximal modulus. A special case is that $\mu$ is a measure which is
  absolutely continuous with respect to Lebesgue measure. We discuss
  various dynamical system case studies in Section~\ref{sec.A2check},
  including piecewise differentiable maps satisfying (A1). For these
  systems we show also that condition (A2) holds for
  a.e.\ $\tilde{x}$.
\end{remark}

There are systems which do not satisfy the
assumptions of Theorem~\ref{the:lowerbound1}, but for which we are
able to prove a similar result.  To state this result, we introduce
the following complexity growth condition.

\begin{enumerate}
\item[(A3)] There exists $K_f \in \mathbbm{N}$, such that for all
  $r \geq 0$ and all $x$ the set $f^{-1}(B(x,r))$ has at most
  $K_f$ connected components.
\end{enumerate}

Condition (A3) is satisfied by maps with finitely many monotone
branches (such as unimodal maps). For these systems the density of
$\mu$ is not necessarily bounded and hence not in $\mathrm{BV}$ as required by
Theorem~\ref{the:lowerbound1}. We have the following result.

\begin{theorem} \label{the:lowerbound3}
  Suppose that $f\colon \XX \to \XX$ is an interval map with
  ergodic probability measure $\mu$, and assume that condition
  (A1) holds.

  Consider the observable $\phi(x) = \psi(\dist(x,\tilde{x}))$
  with $\psi(y) \to \infty$ as $y \to 0$, and suppose that
  condition (A2) holds for sequences of balls centered at
  $\tilde{x}$. Moreover, suppose the sequence $(r_n)$ is such that $n\mapsto
  n\mu(B(\tilde{x},r_n))$ is non-decreasing. We have the following cases.
\begin{enumerate}
\item
  Suppose the sequence $(r_n)$ satisfies
  \[
  \qquad \sum_{n = 1}^\infty \mu (B(\tilde{x},r_n))
  e^{- n \mu (B(\tilde{x},r_n))} < \infty.
  \]
  Then for any $a > 1$, we have
  \[
  \mu\bigl( M_n \geq \psi(r_{[\frac{n}{a}]}) \text{ ev.}\bigr) =
  1.
  \]  
  
  In particular, if $\mu \{\phi \geq u_n\} \geq c \frac{\log \log
    n}{n}$ for some constant $c > 1$, then
  \[
  \mu (\mathcal{H}_{\mathrm{ea}}) = \mu\bigl( u_n \leq M_n \text{
    ev.} \bigr) = 1.
  \]
	\item
	 Suppose condition (A3) holds, and the sequence $(r_n)$ satisfies
  \[
  \sum_{n = 1}^\infty \mu (B(\tilde{x},r_n))
  e^{- n \gamma \mu (B(\tilde{x},r_n))} = \infty,
  \]
  for some $\gamma>1$. Then we have
  \[
  \mu (\mathcal{H}_{\mathrm{ea}}) = \mu\bigl(\psi(r_n) \leq M_n
  \text{ ev.}\bigr) = 0.
  \]
  
  In particular, if $\mu (\phi \geq u_n) \leq c \frac{\log \log
  	n}{n}$ for some constant $c < 1$, then
  \[
  \mu (\mathcal{H}_{\mathrm{ea}}) = \mu\bigl( u_n \leq M_n \text{
  	ev.} \bigr) = 0.
  \]
	\end{enumerate}
	\end{theorem}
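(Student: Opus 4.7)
The plan is to prove both parts via the blocking scheme developed in Section~\ref{sec.overview-block}, combined with Borel--Cantelli type arguments along carefully chosen subsequences. The key inputs will be the two-sided estimates
\[
e^{-\gamma' n \mu(B_n)} - E^{-}(n) \;\leq\; \mu\bigl(M_n \leq \psi(r_n)\bigr) \;\leq\; e^{-n \mu(B_n)} + E^{+}(n),
\]
with $B_n := B(\tilde{x},r_n)$, where both errors are of order $O(n^{-\alpha})$ for some $\alpha > 0$ once the block length is chosen as $q = n^{s}$ from (A2), and $\gamma' > 1$ arises from the lower blocking step. The upper estimate uses (A1) to concatenate blocks and (A2) to control short returns inside a block; the lower estimate additionally invokes (A3) to bound the BV norm of the relevant indicators when applying (A1).

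For Part~(1), fix $a > 1$, set $n_k = \lceil a^k \rceil$, and target the events $A_k := \{M_{n_k} \leq \psi(r_{n_k})\}$. The error contributions $\sum_k E^{+}(n_k)$ are summable since $E^{+}(n_k) = O(a^{-\alpha k})$. For the exponential term, write $\tau_n := n\mu(B_n)$; using that $\tau_n$ is non-decreasing (and, in the relevant case, $\mu(B_n)$ is non-increasing), the packing inequality
\[
\sum_{n = n_k}^{n_{k+1}-1}\mu(B_n) e^{-\tau_n} \;\geq\; (1 - 1/a)\, \tau_{n_{k+1}}\, e^{-\tau_{n_{k+1}}}
\]
combined with the Robbins--Siegmund hypothesis yields $\sum_k \tau_{n_k} e^{-\tau_{n_k}} < \infty$. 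Since we may assume $\tau_n \to \infty$ (otherwise $\sum\mu(B_n) < \infty$ and the conclusion is immediate by a direct Borel--Cantelli), this forces $\sum_k e^{-\tau_{n_k}} < \infty$. Borel--Cantelli along $(n_k)$ then gives $M_{n_k} \geq \psi(r_{n_k})$ eventually; for any $n \in [n_k, n_{k+1})$, monotonicity of $M_n$ together with $[n/a] \leq n_k$ yields $M_n \geq M_{n_k} \geq \psi(r_{n_k}) \geq \psi(r_{[n/a]})$. The ``in particular'' statement follows by applying the above to a sequence $(r_n)$ with $\mu(B_n) = c'\log\log n/n$ for an intermediate $c' \in (1,c)$ and $a \in (1, c/c')$, so that the radii satisfy $r_{[n/a]} \leq r_n^\star$ eventually.

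For Part~(2), select a rapidly growing subsequence $n_k$ (with $n_{k+1}/n_k \to \infty$) such that the divergence $\sum_n \mu(B_n) e^{-\gamma n\mu(B_n)} = \infty$, combined with the lower blocking bound and the slack $\gamma > 1 \geq \gamma'$ from the hypothesis, forces $\sum_k \mu(A_k) = \infty$. To upgrade this first-moment divergence into $\mu(A_k \text{ i.o.}) = 1$, I invoke Chung--Erd\H{o}s (or Kochen--Stone), which demands the pairwise quasi-independence
\[
\mu(A_j \cap A_k) \;\leq\; \bigl(1 + o(1)\bigr)\,\mu(A_j)\mu(A_k), \qquad j < k.
\]
This is obtained by splitting $A_k$ along the disjoint intervals $[0, n_j)$ and $[n_j + g_k, n_k)$ via the decay of correlations (A1) with a gap $g_k$ of logarithmic size in $n_k$, and bounding the discarded tail piece $[n_j, n_j + g_k)$ by $g_k\mu(B_{n_k})$, which is negligible compared to $\mu(A_j)\mu(A_k)$. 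The inclusion $\{A_k \text{ i.o.}\} \subseteq \{M_n \leq \psi(r_n) \text{ i.o.}\}$ completes the proof; the ``in particular'' statement follows by choosing any $\gamma \in (1, 1/c)$.

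The main obstacle will be the pairwise quasi-independence in Part~(2). Both the lower blocking estimate and the decoupling step require controlling the BV norm of $\mathbbm{1}_{A_k}$, which is the indicator of a finite intersection of complements of preimages of $B_{n_k}$. Condition (A3) is precisely what is needed here: it caps the number of connected components of $f^{-i}(B_{n_k})$ by $K_f^i$, keeping the BV norm polynomially controlled in $n_k$ once $g_k = O(\log n_k)$, so that the exponential decay in (A1) still produces the required decoupling with a summable error. This is why Part~(2) requires (A3) while Part~(1) does not.
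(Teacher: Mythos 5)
Your Part~(1) argument matches the paper's: both pass to the subsequence $n_k = [a^k]$, control $\mu(M_{n_k} < u_{n_k})$ via the blocking estimate of Lemma~\ref{lem:Ml} plus (A2), then apply Borel--Cantelli after a Cauchy-condensation step (the paper isolates this as Proposition~\ref{prop:Cauchycondensation}, your ``packing inequality'' is the same idea) and fill in the gaps $[n_k,n_{k+1})$ by monotonicity. Fine.

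For Part~(2), the overall plan --- pick a subsequence $(n_k)$, show $\sum_k\mu(A_k)=\infty$, verify pairwise quasi-independence through (A1)+(A3), and close with a Chung--Erd\H{o}s/Bonferroni estimate --- is also the paper's strategy (Section~\ref{sec.pf.thm.lb3case2}, via the Galambos argument and Lemma~\ref{lem:split2}). But your choice $g_k = O(\log n_k)$ for the decoupling gap is wrong, and the error is not cosmetic. The BV norm of $\mathbbm{1}_{A_j}$, where $A_j = \{M_{n_j}\leq u_{n_j}\}$ involves $n_j$ preimages, is of order $K^{n_j}$ by (A3); the decay-of-correlations error when splitting $A_j$ off from the tail of $A_k$ is therefore $\sim K^{n_j}e^{-\tau(n_j+g_k)}$. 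Nothing forces $\log K\leq\tau$, so the factor $(Ke^{-\tau})^{n_j}$ typically grows exponentially in $n_j$, and to kill it you need $g_k$ at least proportional to $n_j = n_{k-1}$, not logarithmic in $n_k$. This is precisely why the paper takes $\ell_t \approx a_{t-1}\log\log a_{t-1}$ (the extra $\log\log$ beats any exponential growth constant). With $g_k = O(\log n_k)$ the only way to make the error small is to force $n_{k-1} \lesssim \log n_k$, i.e.\ tower-like growth --- but then $\mu(A_k)\approx (\log n_k)^{-c}$ decays super-exponentially and $\sum_k\mu(A_k)<\infty$, which destroys the Chung--Erd\H{o}s step. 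You cannot have both a logarithmic gap and a divergent $\sum\mu(A_k)$.

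Related to this: you state that the divergence hypothesis ``forces $\sum_k\mu(A_k)=\infty$'' via the $\gamma>1$ slack without explaining how. This is the second nontrivial step the paper handles explicitly. The recursive sequence $a_{n+1}=(1+(\log\log a_n)^3)a_n$ is tuned so that $\mu(X_1>v_{a_n})(a_{n+1}-a_n)\approx(\log\log a_n)^4$ is absorbed by $e^{-(\gamma-1)n\mu(X_1>v_n)}\approx(\log a_n)^{-(\gamma-1)D}$ in the Cauchy-condensation comparison; the divergence of $\sum_n\mu(B_n)e^{-\gamma n\mu(B_n)}$ is then passed from the full sequence to the subsequence. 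This balance --- blocks not so sparse that $\sum\mu(A_k)$ converges, not so dense that the decorrelation error fails --- is the crux of Part~(2) and is missing from your outline. The gap and the block spacing must be chosen jointly.

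A minor point: you attribute a lower blocking bound $\mu(M_n\leq u_n)\geq e^{-\gamma'n\mu(B_n)}-E^-(n)$ with $\gamma'>1$ to (A3). The paper's Corollary~\ref{cor.lem:Ml} already gives the two-sided estimate $|\mu(M_l<u_n)-e^{-l\mu(\phi\geq u_n)}|\leq C_\beta n^{-\gamma'}$ using only (A1) and (A2); it is additive, with the exact constant $1$ in the exponent, and does not need (A3). What (A3) is for in Part~(2) is exactly what you identify in your last paragraph --- the BV control on $\mathbbm{1}_{A_j}$ when decorrelating --- not for the pointwise blocking estimate itself.
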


\begin{remark}
  An example fitting this theorem is a quadratic map with
  Benedicks--Carleson parameter. For such systems it can be
  proved along the lines of the estimates by Collet \cite{Collet}
  that (A2) holds for almost all $\tilde{x}$. See
  Section~\ref{sec.A2check} for precise statements.
\end{remark}

\begin{remark}
  The proof of Theorem~\ref{the:lowerbound3} uses a
  Cauchy-condensation method. The method turns out to be quite
  versatile in obtaining the lower bound sequence $u_n=\psi(r_n)$
  for $M_n$, but is less applicable for establishing dichotomy
  results, i.e.\ to understand when $\nu(M_n\leq u_n \text{ i.o.})=1.$ 
Thus to prove case (2) we instead follow a method similar to that used
for case (2) of Theorem~\ref{the:lowerbound1}.		
	The contrasting bounds obtained from
  these two theorems are very fine. Indeed, from
  Theorem~\ref{the:lowerbound1}, a lower bound sequence $u_n$
  satisfies
  \[
  \mu(X_1>u_n)\geq\frac{\log\log n}{n}+\frac{c\log\log\log n}{n},
  \qquad \text{for some } c > 2.
  \]
  This sequence is a narrow improvement on the range of lower bound
  sequences implied by Theorem~\ref{the:lowerbound3}. Similarly for
  the sequences that determine when $\mu
  (\mathcal{H}_{\mathrm{ea}})=0$.
\end{remark}

It is worth to compare these bounds with Corollaries~1.3 and 1.4 in
\cite{Kleinbock}. Imposing a long-term independence property on
the shrinking target system they obtain tight conditions on the
shrinking rate of the targets so that $\mathcal{H}_{\mathrm{ea}}$
has zero or full measure. In particular, their
assumptions are satisfied for specific choices of targets in
product systems and Bernoulli shifts. In the case of product
systems, \cite[Corollary~1.3]{Kleinbock} yields that the
shrinking rate $\mu(B_n)\geq \frac{c \log\log n}{n}$ for some
$c>1$ implies $\mu(\mathcal{H}_{\mathrm{ea}}(\mathbf{B}))=1$,
while $\mu(B_n)\leq \frac{\log\log n}{n}$ for all but finitely
many $n$ implies $\mu(\mathcal{H}_{\mathrm{ea}}(\mathbf{B}))=0$.

In Section~\ref{sec.clustering}, we discuss results and examples
in the case where the short return time condition (A2) fails. For
these examples the maximum process has a non-trivial extremal
index $\theta\in(0,1)$. We also show that the Robbins--Siegmund
series criteria fails, and propose more general criteria on what
the bounding sequences $u_n$ and $v_n$ should satisfy. 

\section{Almost sure growth of maxima for processes having an
  extremal index}\label{sec.clustering}

In this section we consider again measure preserving dynamical
system $(f,\mathcal{X},\mu)$, and the maximum process
$M_n=\max_{k\leq n-1}\phi\circ f^{k}$. However, we consider the
situation where equations \eqref{eq.un-seq} and \eqref{eq.Mn-law}
apply for a non-trivial extremal index $\theta\in(0,1)$. We show
that the Robbins--Siegmund series criteria as stated in
\eqref{eq.rs1} and \eqref{eq.rs2} are no longer valid for
producing the (almost sure) bounding sequences for the process
$M_n$.  We obtain modified series criteria based on inclusion of
the parameter $\theta$. Towards the end of this section, we
propose a general question on the validity of such series
criteria for bounding $M_n$ in the case of general dependent
processes, i.e.\ beyond dynamical systems.

More explicitly, we consider situations where the short return
time condition (A2) fails. For dynamical systems, this can happen
in the case for a sequence of shrinking targets limiting onto a
periodic point. To state our main results, we shall focus on this
case. Indeed, for observables of the form
$\phi(x)=\psi(\mathrm{dist}(x,\tilde{x}))$ a non-trivial extremal
index tends to only arise in these cases, especially for the
dynamical systems we consider.  However, there are many other
mechanisms that can give rise to a non-trivial $\theta$, for an
overview see \cite{Letal}.
 
For an observable maximized at a periodic point
$\tilde{x}\in\mathcal{X}$, assumption (A2) can be shown to fail
as follows.  Suppose $f^{p}(\tilde{x})=\tilde{x}$, for some
$p\geq 1$. Then
\[
\mu(B(\tilde{x},r) \cap f^{-p}(B(\tilde{x},r))) > C_p \mu
(B(\tilde{x},r)),
\]
where $C_p>0$ depends on the derivative of $f^p$ at $\tilde{x}$
and the measure $\mu$. (The constant is non-zero, if
$(f^p)'(\tilde{x})<\infty$ and $\mu$ is equivalent to Lebesgue
measure, at least locally at $\tilde{x}$).  From the view of
extreme value theory, the maximum process has a distribution
governed by a non-trivial extremal index, as described by
equations \eqref{eq.un-seq} and \eqref{eq.Mn-law}.

The blocking arguments that we use to prove
Theorems~\ref{the:lowerbound1}--\ref{the:lowerbound3} must be
adapted to account for the failure of condition (A2). The
relevant modifications are discussed in (for example)
\cite{FFT3}, and we review the relevant constructions. We
introduce the events
\begin{align*} 
  U(u) &= \{X_0>u\},\\ A^{(q)}(u) &= U(u) \cap \bigcap_{k=1}^{q}
  f^{-k}(U(u)^\complement). 
\end{align*}
In the case $u:=u_n$ we write $U_n=U(u_n)$, and
$A^{(q)}_n=A^{(q)}(u_n).$ Define a sequence $\theta_n$ via
\begin{equation*} 
  \theta_n=\frac{\mu(A^{(q)}_n)}{\mu(U_n)}.
\end{equation*}
In the setting of a dynamical system with a $q$-periodic point
$\tilde{x}$ and an observable $\phi(x) =
\psi(\mathrm{dist}(x,\tilde{x}))$, the event $A^{(q)}(u_n)$ gives
the points in $U_n= B(\tilde{x},r_n)$, where $u_n = \psi(r_n)$,
that do not return to $B(\tilde{x},r_n)$ within $q$
iterates. Accordingly, $\theta_n$ is the proportion of points in
$B(\tilde{x},r_n)$ that do not return within $q$ iterates.

When $\theta=\lim_{n \to \infty}\theta_n$ exists, then this
constant $\theta\in(0,1]$ takes the role as the extremal index.
  We make the following convergence assumption.
\begin{enumerate}
\item[(A4)] There exists $\hat\sigma>0$ such that
  \begin{equation*} \label{eq:ei-rate}
    |\theta_n-\theta|=O(n^{-\hat\sigma}),
  \end{equation*}
  and $\theta\neq 0$.
\end{enumerate}
Assumption~(A4) is an assumption on the local property of the
dynamical system at $\tilde{x}$.  Verification depends on
assumptions of the regularity of the invariant density and
derivative of $f$ at $\tilde{x}$.  It is easy to see that
Assumption~(A4) is valid when $\tilde{x}$ is a hyperbolic
repelling periodic point for a piecewise (linear) expanding map,
and $\mu$ is Lebesgue measure. In these cases,
for the limit we get
\begin{equation}\label{eq.ei-formula}
\theta = 1 - \frac{1}{|(f^q)'(\tilde{x})|}.
\end{equation}
Indeed, if $f(x)$ is the doubling map
$f(x)=2x\mod 1$ on $[0,1]$, then we get the exact formula
\[
\theta_n=\theta=1-\frac{1}{2^q},
\]
for all $n$ sufficiently large. The result of equation
\eqref{eq.ei-formula} also holds in greater generality, see
\cite{FFT2}. We remark that $\theta\neq 0$ is assumed in (A4).
In Section~\ref{sec.theta-zero} we consider an example where
$\theta=0$.

For a broad class of non-uniformly expanding dynamical systems,
see \cite{FFT2} for examples where $\theta$ is computed together
with establishing the corresponding limit law for the maxima.  We
have the following result.  

\begin{theorem} \label{thm.clustering1}
  Suppose that $f\colon \XX \to \XX$ is an interval map with an
  ergodic probability measure $\mu$. Assume that condition (A1) holds.
  Furthermore, assume that $\mu$ is a Gibbs measure which has a
  density $h$ with respect to a conformal measure, that $h$ is an
  eigenfunction of a transfer operator with a spectral gap when acting
  on function of bounded variation, and that $h$ is the unique (up to
  scalar multiples) eigenfunction of maximal modulus of the
  eigenvalue.

  Consider the observable $\phi(x) = \psi(\dist(x,\tilde{x}))$ with
  $\psi(y) \to \infty$ as $y \to 0$, and $\tilde{x}$ is a hyperbolic
  periodic point of period $q$, and
  $|(f^q)'(\tilde{x})|<\infty$. Consider a sequence of balls of radii
  $r_n \to 0$, each centered at $\tilde{x}$.  Suppose that (A4)
  holds, and that the sequence $(r_n)$ is such that $n\mapsto
  n\mu(B(\tilde{x},r_n))$ is non-decreasing. Then we have the following dichotomy:
  \begin{enumerate}
  \item  If the sequence $(r_n)$ satisfies
    \[
    \sum_{n = 1}^\infty \mu (B(\tilde{x},r_n)) e^{- n\theta \mu
      (B(\tilde{x},r_n))} < \infty,
    \]
    then
    \[
    \mu (\mathcal{H}_{\mathrm{ea}}) = \mu\bigl( \psi(r_n)\leq M_n
    \text{ ev.}\bigr) = 1.
    \]  
  \item If the sequence $(r_n)$ satisfies	
    \[\qquad \sum_{n = 1}^\infty \mu (B(\tilde{x},r_n))
    e^{- n \theta \mu (B(\tilde{x},r_n))} = \infty,
    \]
    then
    \[
    \mu(\mathcal{H}_{\mathrm{ea}})=\mu\bigl( \psi(r_n)\leq M_n
    \text{ ev.}\bigr) = 0.
    \]
  \end{enumerate}
  In both cases $\theta$ denotes the corresponding extremal
  index.
\end{theorem}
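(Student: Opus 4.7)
The plan is to adapt the blocking argument used for Theorem~\ref{the:lowerbound1} by working with the escape events $A^{(q)}_n$ in place of the original target balls $U_n=B(\tilde{x},r_n)$. Near a $q$-periodic point the events $\{U_n\}$ exhibit clustering: a visit to $U_n$ is typically followed by further visits at times $q,2q,\ldots$ for a geometric number of steps, so (A2) for $U_n$ fails. The set $A^{(q)}_n$ isolates the start of a cluster, and by (A4) we have $\mu(A^{(q)}_n)=\theta_n\mu(U_n)=\theta\mu(U_n)(1+O(n^{-\hat\sigma}))$. Consequently the natural exponential comparison becomes $e^{-n\theta\mu(U_n)}$ rather than $e^{-n\mu(U_n)}$, which is precisely the expression in the two series conditions of the theorem.

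As a first step I would establish a modified short-return-time estimate along the block length $p_n=n^s$, namely
\[
\sum_{j=q+1}^{p_n}\mu\bigl(A^{(q)}_n\cap f^{-j}A^{(q)}_n\bigr)=O(n^{-1-\gamma})
\]
for suitable exponents $s,\gamma$ satisfying the analogue of \eqref{eq.short-constants}. Because $\tilde{x}$ is hyperbolic periodic with $|(f^q)'(\tilde{x})|<\infty$ and the Gibbs density $h$ is locally regular at $\tilde{x}$, the set $A^{(q)}_n$ is (up to negligible terms) an annular subset of $U_n$ of relative measure $\theta$ whose first $q$ iterates avoid $U_n$ by construction. Long returns $j\gg\log n$ are handled by exponential decay of correlations (A1); the intermediate range $q<j=O(\log n)$ follows from the expansion rate at $\tilde{x}$ together with (A4). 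This plays the role that (A2) played for $U_n$ in Theorem~\ref{the:lowerbound1}.

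Next I would run the blocking construction from Section~\ref{sec.overview-block} on the events $A^{(q)}_n$. Partitioning $\{0,1,\ldots,n-1\}$ into long blocks of length $\ell_n$ separated by short gaps of length $g_n$ and decorrelating the blocks via (A1), one arrives at an estimate of the form
\[
\mu(M_n\le\psi(r_n))=\exp\bigl(-n\mu(A^{(q)}_n)(1+o(1))\bigr)=\exp\bigl(-n\theta\mu(U_n)(1+o(1))\bigr),
\]
with an explicit error controlled by the rate in (A1), the exponent $\hat\sigma$ in (A4), and the modified short-return bound above. For case~(1), summability of $\mu(U_n)e^{-n\theta\mu(U_n)}$ combined with a Cauchy-condensation argument along a subsequence $n_k=k^t$ (using that $n\mapsto n\mu(B(\tilde{x},r_n))$ is non-decreasing, exactly as in Theorem~\ref{the:lowerbound1}) yields $\mu(M_n\ge\psi(r_n)\text{ ev.})=1$, i.e.\ $\mu(\mathcal{H}_{\mathrm{ea}})=1$. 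For case~(2), divergence of the same series together with a Kochen--Stone second-moment argument applied to the events $\{M_n\le\psi(r_n)\}$ along a well-separated subsequence, decorrelated via (A1), shows that $\{M_n\le\psi(r_n)\}$ occurs infinitely often, whence $\mu(\mathcal{H}_{\mathrm{ea}})=0$.

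The main obstacle is the first step: pushing the approximation $\mu(M_n\le\psi(r_n))\approx e^{-n\theta\mu(U_n)}$ to hold with a uniform multiplicative error along the entire sequence $(r_n)$, not just along the distinguished EVT scaling $n\mu(U_n)\to\tau$ of \eqref{eq.un-seq}--\eqref{eq.Mn-law}. Achieving this requires combining the rate $\hat\sigma$ in (A4), the hyperbolicity of $\tilde{x}$, and the regularity of the Gibbs density $h$ in a delicate way so that the clustering correction collapses into the single multiplicative factor $\theta$ in the exponential. Once this is done, the Borel--Cantelli-type arguments from the proof of Theorem~\ref{the:lowerbound1} transfer verbatim, with $\mu(U_n)$ replaced by $\theta\mu(U_n)$ in the exponent.
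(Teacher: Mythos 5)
Your overall architecture agrees with the paper's: work with the escape sets $A^{(q)}_n$ rather than $U_n$, use (A4) to write $\mu(A^{(q)}_n)=\theta\mu(U_n)(1+O(n^{-\hat\sigma}))$, handle short lags $q<j=O(\log n)$ by noting that hyperbolicity of the periodic point forces $f^j(A^{(q)}_n)\cap A^{(q)}_n=\emptyset$ up to time $\approx c_0\log n$, handle larger $j$ by (A1), and run the blocking argument to obtain $\mu(M_n\leq u_n)=e^{-n\theta\mu(U_n)}+O(n^{-\gamma'})$ (this is exactly the paper's Proposition~\ref{prop.ei-conclusion}, proved via equations \eqref{eq.escaperate}--\eqref{eq.escaperate3}). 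So the core analytic input is the same.

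The gap is in how you finish Case~(1). You propose a Cauchy-condensation argument along a subsequence $n_k=k^t$, attributing this method to the proof of Theorem~\ref{the:lowerbound1}; but in fact the paper's Theorem~\ref{the:lowerbound1} does \emph{not} use Cauchy-condensation (that is the method for Theorem~\ref{the:lowerbound3}/\ref{thm.hyp1}). More importantly, Cauchy-condensation cannot deliver the clean conclusion $\mu(\psi(r_n)\leq M_n\ \text{ev.})=1$ from the Robbins--Siegmund condition. Concretely: at the critical scale $\mu(U_n)\approx c\log\log n/n$, one has $e^{-n_k\theta\mu(U_{n_k})}\approx(\log n_k)^{-c\theta}$; along a polynomial subsequence $n_k=k^t$ this is $\asymp(\log k)^{-c\theta}$, whose sum over $k$ is always divergent, so the first Borel--Cantelli Lemma fails. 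Along a geometric subsequence $n_k=[a^k]$ the sum does converge when $c\theta>1$, but then one only concludes $M_n\geq\psi(r_{[n/a]})$ eventually --- i.e.\ Case~(1) of Theorem~\ref{thm.hyp1}, which is strictly weaker than the statement you are asked to prove (no free $c>1$ gap to absorb the $a$). The reason the paper gets the sharp form is that it does not Borel--Cantelli the events $\{M_{n_k}<u_{n_k}\}$ directly: following Section~\ref{sec.overview-block}, it instead uses the identity $\mu(M_n<u_n\ \text{i.o.})=\mu(M_n<u_n\ \text{and}\ X_{n+1}\geq u_{n+1}\ \text{i.o.})$ and bounds $\mu(M_n<u_n\ \text{and}\ X_{n+1}\geq u_{n+1})$ via the decorrelation estimate of Lemma~\ref{lem:gausssplit}. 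This produces the extra multiplicative factor $\mu(U_n)$ in the summand, which is precisely what makes the Robbins--Siegmund series $\sum\mu(U_n)e^{-n\theta\mu(U_n)}$ the correct summability criterion. Without that factor your condensation step needs the much stronger $\sum e^{-n\theta\mu(U_n)}<\infty$.

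For Case~(2), a second-moment/Kochen--Stone style argument along a sparse subsequence is in the right spirit, but is close to vacuous unless you exhibit the subsequence and the pairwise decorrelation bound in enough detail to check the second-moment ratio. The paper's argument in Section~\ref{sec.pf.main-case2} does this along $a_n=e^{\lambda n/\log n}$, establishing the two conditions \eqref{eq.MM2}--\eqref{eq.MM3} via Lemma~\ref{lem:gausssplit} and the approximation $\mu(M_n\leq u_n)\approx e^{-n\theta\mu(U_n)}$; carrying this bookkeeping out is essentially the content of the Galambos argument, so ``Kochen--Stone'' here is more a label than a shortcut.
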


We prove Theorem~\ref{thm.clustering1} in
Section~\ref{sec.pf.thm.clustering1}. See also \cite[Section~4]{FFT2}
for similar discussions in the distributional convergence cases. The
following result concerns the eventual lower bound for the maximum
process, and can be contrasted to Theorem~\ref{the:lowerbound3}.
\begin{theorem}\label{thm.hyp1}
Suppose that $f\colon \XX \to \XX$ is an interval map with ergodic
probability measure $\mu$, and assume that condition (A1)
holds. Consider the observable
$\phi=\psi(\mathrm{dist}(x,\tilde{x}))$, where $\tilde{x}$ is a
hyperbolic periodic point of period $q$ with
$|(f^q)'(\tilde{x})|<\infty$, and suppose (A4) holds at $\tilde{x}$.
Moreover, suppose the sequence $(r_n)$ is such that $n\mapsto n\mu(B(\tilde{x},r_n))$ is
non-decreasing. We have the following cases.
\begin{enumerate}
\item
  Suppose the sequence $(r_n)$ satisfies
  \[
  \qquad \sum_{n = 1}^\infty \mu (B(\tilde{x},r_n))
  e^{- n\theta \mu (B(\tilde{x},r_n))} < \infty.
  \]
  Then for any $a > 1$, we have
  \[
  \mu\bigl( M_n \geq \psi(r_{[\frac{n}{a}]}) \text{ ev.}\bigr)
  = 1.
  \]  
  
  In particular, if $\mu (\phi \geq u_n) \geq c\theta^{-1} \frac{\log \log
    n}{n}$ for some constant $c > 1$, then
  \[
  \mu (\mathcal{H}_{\mathrm{ea}}) = \mu \bigl( u_n \leq M_n \text{
    ev.} \bigr) = 1.
  \]
\item
  Suppose condition (A3) holds, and the sequence $(r_n)$ satisfies
  \[
  \sum_{n = 1}^\infty \mu (B(\tilde{x},r_n))
  e^{- n\theta \gamma \mu (B(\tilde{x},r_n))} = \infty,
  \]
  for some $\gamma>1$. Then we have
  \[
  \mu (\mathcal{H}_{\mathrm{ea}}) = \mu\bigl(\psi(r_n) \leq M_n
  \text{ ev.}\bigr) = 0.
  \]
  
  In particular, if $\mu (\phi \geq u_n) \leq c\theta^{-1} \frac{\log \log
    n}{n}$ for some constant $c < 1$, then
  \[
  \mu (\mathcal{H}_{\mathrm{ea}}) = \mu \bigl( u_n \leq M_n \text{
  	ev.} \bigr) = 0.
  \]
\end{enumerate}
\end{theorem}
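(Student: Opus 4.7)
The plan is to mirror the proof of Theorem~\ref{the:lowerbound3}, replacing the rate $n\mu(B(\tilde{x},r_n))$ by $n\theta\mu(B(\tilde{x},r_n))$ throughout to account for clustering at the hyperbolic periodic point. The key mechanism is the clustering form of the extreme value law $\mu(M_n\leq u_n)\approx e^{-n\theta\mu(X_1>u_n)}$; assumption (A4) supplies the quantitative rate $n^{-\hat\sigma}$ that plays the role that (A2) plays in the aperiodic case, while (A1) and (A3) provide the dynamical control needed to run the blocking scheme through the anti-return events $A^{(q)}_n=U_n\cap\bigcap_{k=1}^q f^{-k}U_n^\complement$, whose measure $\theta_n\mu(U_n)$ is the cluster-corrected per-block exceedance probability.

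For case (1), I would use the Cauchy-condensation technique of case (1) of Theorem~\ref{the:lowerbound3}. Fix $a>1$ and pass to the subsequence $n_k=[a^k]$. Monotonicity of $n\mapsto n\mu(B(\tilde{x},r_n))$ together with the hypothesis $\sum_n\mu(B(\tilde{x},r_n))e^{-n\theta\mu(B(\tilde{x},r_n))}<\infty$ gives summability of the condensed series, and the clustering extreme value law combined with the (A4) error yields
\[
\mu\bigl(M_{n_k}\leq\psi(r_{n_k})\bigr)\leq C e^{-n_k\theta\mu(B(\tilde{x},r_{n_k}))}+O(n_k^{-\hat\sigma}).
\]
The first Borel--Cantelli lemma then gives $M_{n_k}>\psi(r_{n_k})$ eventually in $k$. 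Interpolating for $n\in(n_{k-1},n_k]$ using monotonicity of $M_n$ and the inequality $[n/a]\leq n_{k-1}$ yields $M_n\geq\psi(r_{[n/a]})$ eventually.

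For case (2) I would follow the strategy of case (2) of Theorem~\ref{the:lowerbound1}. The blocking argument adapted to clustering produces, with the help of (A1) and (A3), a lower bound of the form $\mu(M_n\leq\psi(r_n))\geq c\, e^{-n\theta\gamma\mu(B(\tilde{x},r_n))}$ valid for any fixed $\gamma>1$ and all $n$ large; here (A3) replaces the spectral-gap hypothesis of Theorem~\ref{the:lowerbound1} by controlling via $K_f$ the number of connected components of preimages of balls appearing in the short-return estimates. A Kochen--Stone/second-moment style argument, using (A1) to handle pairwise correlations between $\{M_n\leq\psi(r_n)\}$ at different times, converts the divergence of $\sum \mu(B(\tilde{x},r_n))e^{-n\theta\gamma\mu(B(\tilde{x},r_n))}$ into an almost-sure infinitely-often statement for $\{M_n<\psi(r_n)\}$, so $\mu(\psi(r_n)\leq M_n\ \text{ev.})=0$. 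The ``in particular'' statements follow by substitution: when $\mu(\phi\geq u_n)$ is of order $c\theta^{-1}\log\log n/n$, the exponential factor $e^{-n\theta\mu(\phi\geq u_n)}$ is of order $(\log n)^{-c}$, and the usual threshold analysis of $\sum (\log\log n)/(n(\log n)^{c\gamma})$ distinguishes $c>1$ (convergence, choosing $\gamma$ close to $1$) from $c<1$ (divergence, choosing $\gamma>1$ with $c\gamma<1$).

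The main obstacle is in case (2): one must carefully propagate the approximation $\theta_n\to\theta$ from (A4) through the blocking partition at rate $n^{-\hat\sigma}$, ensuring these errors are summable along the relevant condensation subsequence and are absorbed by the margin $\gamma>1$ in the exponent. A secondary subtlety is that (A3) is genuinely weaker than the Gibbs/transfer-operator hypothesis of Theorem~\ref{the:lowerbound1}, so the short-return estimates driving the lower bound on $\mu(M_n\leq\psi(r_n))$ cannot appeal to spectral techniques and must instead rely on direct combinatorial control of preimages via the constant $K_f$, interleaved with the decay of correlations (A1) on the complementary long-range blocks.
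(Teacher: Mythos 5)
Your case (1) matches the paper's argument: the paper establishes (via the lemma on escape rates culminating in Proposition~\ref{prop.ei-conclusion}) the estimate $\mu(M_n<u_n)=\exp(-n\theta\mu(\phi\geq u_n))+O(n^{-\gamma'})$ for some $\gamma'>0$, then passes to $n_k=[a^k]$, applies the First Borel--Cantelli Lemma via Proposition~\ref{prop:Cauchycondensation}, and interpolates using monotonicity of $M_n$ and $r_n$. Your treatment of the ``in particular'' substitutions is also correct.

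Case (2) has two real gaps. First, your claimed pointwise lower bound $\mu(M_n\leq\psi(r_n))\geq c\,e^{-n\theta\gamma\mu(B(\tilde{x},r_n))}$ for fixed $\gamma>1$ is not what the argument establishes or needs; the blocking estimate is the \emph{two-sided} approximation $\mu(M_n\leq u_n)=e^{-n\theta\mu(\phi\geq u_n)}+O(n^{-\gamma'})$, with no $\gamma$ appearing in the exponent. The factor $\gamma>1$ in the series hypothesis does not encode slack in the measure estimate; its actual role is to absorb a $(\log\log a_n)^4$ factor that arises from passing to the subsequence $(a_n)$ defined recursively by $a_{n+1}=(1+(\log\log a_n)^3)\,a_n$. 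Concretely, one shows $\sum_n \exp(-a_n\theta\mu(X_1>v_{a_n}))=\infty$ by condensing the hypothesis $\sum_n\mu(X_1>v_n)e^{-n\theta\gamma\mu(X_1>v_n)}=\infty$ over the blocks $[a_n,a_{n+1})$; each block contributes $\approx(\log\log a_n)^4\cdot(\log a_n)^{-(\gamma-1)D}\cdot e^{-a_n\theta\mu(\cdot)}$, and $\gamma>1$ makes the polylog prefactor bounded. This is where the margin lives, not in the blocking estimate.

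Second, the ``Kochen--Stone / second-moment'' framing along all $n$ does not survive contact with Lemma~\ref{lem:split2}: under (A3) the decorrelation error is of size $c_1K^n e^{-\tau(n+\ell)}$, which grows exponentially in $n$ and swamps the probabilities $\mu(A_n)\approx(\log n)^{-c}$ unless the gaps $\ell$ and the indices are chosen to scale together. The paper's proof therefore does not run a second-moment bound over all $n$; it uses the Galambos/Bonferroni identity $\mu(\bigcup_{n=M}^{M'}A_n)=\sum\mu(A_n)-\sum\mu(A_n\cap\bigcup_{i>n}A_i)$ restricted to the sparse recursive subsequence $(a_n)$, with decorrelation gaps $\ell(t)=(\log\log a_{t-1})\,a_{t-1}$ chosen precisely so that $K^{a_n}e^{-\tau(a_n+\ell_t)}$ decays while $\exp\{\ell(t)\mu(X_1>v_{a_t})\}=e^{o(1)}$. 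These specific choices are the technical heart of case (2) and are missing from your proposal; your closing paragraph correctly identifies the failure of the spectral-gap route under (A3) as the difficulty, but the resolution requires this tailored subsequence, not just a generic pairwise-correlation bound.
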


The proof of Theorem~\ref{thm.hyp1} is also given in
Section~\ref{sec.pf.thm.clustering1}. The method of proof uses
Cauchy-condensation techniques.

\subsection{Examples and general discussion of dichotomy criteria.}\label{sec.applications}

In this section we consider further examples, including a
dichotomy criterion applied to a non-dynamical example.  We also
consider an example where the extremal index is zero.
\subsubsection{An example of a stochastic process satisfying the dichotomy
  for a non-trivial $\theta \in (0,1)$.}\label{sec.example.dich}

We consider a stationary stochastic process which has a non-trivial
extremal index.  The mechanism giving rise to the extremal index
here is different to the periodic phenomena arising in the
dynamical systems.

Suppose $(X_n)$ is an i.i.d.\ process with distribution function
$F_X (x) = P (X<x) = 1-x^{-1}$. We let $\overline{F}_X(x) = 1 -
F_X (x)$ (the tail distribution). Consider the process $Y_n =
\max (X_{n},aX_{n+1})$, where $a \geq 1$ is fixed.  We claim the
following:
\begin{enumerate}
\item The extremal index for the process $M^{Y}_n=\max_{k\leq
  n}Y_k$ is given by $\theta=a/(a+1)$.
\item Dichotomy criteria, as consistent with the conclusion of
  Theorem~\ref{thm.clustering1} hold.
\end{enumerate}
First of all (since $a\geq 1$) we have
\[
M^{Y}_n = \max_{k\leq n} Y_n = \max \{X_1,aX_2,aX_3,\ldots,
aX_{n+1}\}.
\]
Now we compute the extremal index for the process $Y_n$. Let
$\overline{F}_{Y}(y):= 1 - P(Y_1<y)$. Then
\begin{align*}
  \overline{F}_{Y}(y) &= 1 - P(Y_n<y) \\ & = 1 - P(X_n<y,X_{n+1}<y/a) \\ &=
  1 - (1-y^{-1})(1-ay^{-1})\\ &=\frac{a+1}{y}+O(y^{-2}).
\end{align*}
Given $\tau>0$ let $u_n=(a+1)n/\tau$. Then this sequence $u_n$
satisfies $nP(Y_1>u_n) \to \tau.$ Now consider the distribution for
the process $M^{Y}_n$. We have
\begin{align*}
  P(M^{Y}_n<u_n) &= P(X_1<u_n)P(X_2<u_n/a) \ldots
  P(X_{n+1}<u_n/a)\\ &= \biggl( 1 - \frac{1}{u_n} \biggr) \biggl(
  1 - \frac{a}{u_n} \biggr)^n\\ &= \biggl( 1 -
  \frac{\tau}{(a+1)n} \biggr) \biggl( 1 - \frac{a\tau}{(a+1)n}
  \biggr)^n \\ & \to e^{-\theta\tau}, \qquad n\to\infty,
\end{align*}
with $\theta=a/(a+1)$.  Consider almost sure bounds for the
maxima of $(X_n)$ via general sequences $\tilde{u}_n$ and
$\tilde{v}_n$, so that $\tilde{v}_n<M^{X}_n<\tilde{u}_n$ (almost
surely). Consider also the intermediate sequence $\tilde{w}_n$
with $P(M^{X}_n>\tilde{w}_n \text{ i.o.}) = P(M^{X}_n<\tilde{w}_n
\text{ i.o.}) = 1$. By definition of the process $(Y_n)$ we see
that (almost surely), for all $n$ sufficiently large
\[
a\tilde{v}_{n+1}<M^{Y}_n<a\tilde{u}_{n+1},
\]
and (almost surely) there are infinite subsequences
$\tilde{w}_{n_k}$ with
\[
M^{Y}_{n_k} > a\tilde{w}_{1+n_k},\qquad M^{Y}_{n_k} < a
\tilde{w}_{1+n_k}.
\]
Using the explicit regularity of the tails $\overline{F}_X(x)$,
and $\overline{F}_Y(y)$, we see that divergence or convergence of
the sum
\begin{equation}\label{eq.iidseries}
  \sum_{n}\overline{F}_X(z_n)e^{-n\overline{F}_X(z_n)}
\end{equation}
is equivalent to divergence and convergence of the sum
\begin{equation*}
  \sum_{n} \overline{F}_Y(az_n) e^{-\frac{an}{a+1}
    \overline{F}_Y(az_n)}.
\end{equation*}
We now apply this to $z_n=\tilde{v}_n$ and $z_n=\tilde{w}_n$. A
simple index relabelling of the series in equation
\eqref{eq.iidseries} shows that convergence/divergence is
unaffected by translation of the index $n$. Since
$e^{\overline{F}(z_n)} = e^{o(1)}$, divergence/convergence of
equation \eqref{eq.iidseries} is equivalent to
divergence/convergence of
\begin{equation*}
  \sum_{n} \overline{F}_X (z_{n-1}) e^{-n \overline{F}_X
    (z_{n-1})}.
\end{equation*}
This concludes the example.

\subsubsection{The Manneville--Pomeau map}\label{sec.mp-map}

In this section we consider how the method of inducing allows us
to extend the conclusions of Theorem~\ref{the:lowerbound1} and
Theorem~\ref{thm.clustering1} to a wider range of examples,
e.g.\ to dynamical systems whose transfer operator does not have
a spectral gap. We illustrate using the Manneville--Pomeau map $f
\colon [0,1] \to [0,1]$ given by
\begin{equation}\label{eq:LSV}
f(x)=
\begin{cases}
x(1+2^{a}x^{a}) &\mbox{ if } 0\leq x<1/2,\\
2x-1 &\mbox{ if } 1/2\leq x\leq 1,
\end{cases}
\end{equation}
with $a \in(0,1)$. In the following we let $A=[1/2,1]$, but the
construction we describe extends to any interval of the form
$[\epsilon,1]$ for $\epsilon>0$. See \cite{GHPZ,KKP}.  Consider the
first return map $\hat{f} \colon A\to A$ given by
$\hat{f}(x)=f^{R(x)}(x)$, with $R(x)= \inf \{\, n\geq 1 : f^n(x)\in A
\,\}$, (and $x\in A$).  The map $\hat{f}$ preserves an absolutely
continuous invariant measure $\hat{\mu}$, with density in
$\mathrm{BV}$, and has exponential decay of correlations of
$\mathrm{BV}$ against $L^1$.
Thus
Theorems~\ref{the:lowerbound1} and~\ref{thm.clustering1} apply to
$(\hat{f},\hat{\mu})$, for observables of the form
$\phi(x)=\psi(\dist(x,\tilde{x}))$ with $\tilde{x}\in A$. Let
$\widehat{M}_n(x)=\max_{k\leq n}\phi(\hat{f}^k(x))$, and (as before)
$M_n(x)=\max_{k\leq n}\phi(f^{k}(x))$. Then we have
\[
M_n(x)=\max_{j\leq k(n,x)}\phi(\hat{f}^j(x))=\widehat{M}_{k(n,x)}(x),
\]
where $x\in A$, and $k(n,x)$ satisfies
\[
\sum_{j=0}^{k(n,x)-1}R(\hat{f}^j(x))\leq
n<\sum_{j=0}^{k(n,x)}R(\hat{f}^j(x)).
\]
Since $f$ preserves an absolutely continuous invariant measure
$\mu$, we deduce that for $\mu$-almost all $x\in A$ that
$k(n,x)/n\to\mu(A)$.  We get the following result.

\begin{proposition}\label{prop.lsv-ind}
  Suppose $f \colon [0,1]\to[0,1]$ is given by equation
  \eqref{eq:LSV}. Let $A=[1/2,1]$ and $a \in (0,1)$. Let
  $\phi(x)=\psi(\dist(x,\tilde{x}))$, with $\psi \colon
        [0,\infty)\to\mathbbm{R}$ monotone decreasing. Then we have
          the following result for almost every $\tilde{x} \in A$.
  \begin{enumerate}
  \item Suppose that the
    sequence $(r_n)$ satisfies $\mu(B(\tilde{x},r_n))>c\log\log
    n/n$ for all $c>1$, then for all $\tilde{x}\in A$
    \[
    \mu (\mathcal{H}_{\mathrm{ea}}) = \mu\bigl( \psi(r_{n})\leq M_n
    \text{ ev.}\bigr) = 1.
    \]  
    
  \item Suppose that the
    sequence $(r_n)$ satisfies $\mu(B(\tilde{x},r_n))<c\log\log
    n/n$ for all $c<1$, then for all $\tilde{x}\in A$
    \[
    \mu (\mathcal{H}_{\mathrm{ea}}) = \mu\bigl( \psi(r_{n})\leq M_n
    \text{ ev.}\bigr) = 0.
    \]  
  \end{enumerate}
\end{proposition}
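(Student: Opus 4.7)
The plan is to derive Proposition~\ref{prop.lsv-ind} by applying Theorem~\ref{the:lowerbound1} to the induced system $(\hat{f},\hat{\mu})$ on $A$, and then transferring the conclusions through the identity $M_n(x)=\widehat{M}_{k(n,x)}(x)$ (valid for $x\in A$) together with the ergodic statement $k(n,x)/n\to\mu(A)=:\alpha$ a.s. As recalled above, $\hat{f}$ has exponential decay of correlations of $\mathrm{BV}$ against $L^{1}$, so condition (A1) holds, and (A2) holds for $\hat{\mu}$-a.e.\ $\tilde{x}\in A$; since $\hat{\mu}=\mu|_{A}/\alpha$, measures of balls transform linearly. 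Fix a small $\epsilon>0$, to be chosen in each case, and set $\alpha':=(1-\epsilon)\alpha$, $\alpha'':=(1+\epsilon)\alpha$; since $\alpha<1$, for $\epsilon$ small enough we also have $\alpha''\leq 1$.

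For case~(1), pick $c>1$ with $\mu(B(\tilde{x},r_n))>c\log\log n/n$ and $\epsilon$ so that $c(1-\epsilon)>1$. Define the rescaled radii $\hat{r}_m:=r_{\lceil m/\alpha'\rceil}$. A direct computation gives $m\hat{\mu}(B(\tilde{x},\hat{r}_m))>c(1-\epsilon)\log\log m$ for large $m$, and $m\mapsto m\hat{\mu}(B(\tilde{x},\hat{r}_m))$ inherits monotonicity from $n\mapsto n\mu(B(\tilde{x},r_n))$ because $\alpha'\leq 1$. Using that $y\mapsto ye^{-y}$ is decreasing for $y>1$, one bounds
\[
\sum_{m}\hat{\mu}(B(\tilde{x},\hat{r}_m))\,e^{-m\hat{\mu}(B(\tilde{x},\hat{r}_m))}\leq C\sum_{m}\frac{\log\log m}{m(\log m)^{c(1-\epsilon)}}<\infty.
\]
Theorem~\ref{the:lowerbound1}(1) applied to the induced system yields $\widehat{M}_m(x)\geq\psi(\hat{r}_m)$ eventually, for $\hat{\mu}$-a.e.\ $x\in A$. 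For such $x$, eventually $k(n,x)\geq m:=\lceil\alpha'n\rceil$, and $\lceil m/\alpha'\rceil\geq n$, so $M_n(x)=\widehat{M}_{k(n,x)}(x)\geq\widehat{M}_m(x)\geq\psi(\hat{r}_m)\geq\psi(r_n)$ eventually, giving the conclusion.

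Case~(2) is dual. Pick $c<1$ with $\mu(B(\tilde{x},r_n))<c\log\log n/n$ and $\epsilon$ with both $c(1+\epsilon)<1$ and $\alpha''\leq 1$; set $\hat{r}_m:=r_{\lfloor m/\alpha''\rfloor}$. Then $m\hat{\mu}(B(\tilde{x},\hat{r}_m))<c(1+\epsilon)\log\log m$, and the monotonicity of $n\mapsto n\mu(B(\tilde{x},r_n))$ forces in particular $\hat{\mu}(B(\tilde{x},\hat{r}_m))\gtrsim 1/m$. Combining these, the same decrease of $ye^{-y}$ on $(1,\infty)$ now bounds the summand from below by a constant multiple of $\log\log m/(m(\log m)^{c(1+\epsilon)})$, so the induced series diverges by integral comparison. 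Theorem~\ref{the:lowerbound1}(2) then supplies, for $\hat{\mu}$-a.e.\ $x\in A$, a random subsequence $m_k\to\infty$ along which $\widehat{M}_{m_k}(x)<\psi(\hat{r}_{m_k})$. Taking $n_k:=\lfloor m_k/\alpha''\rfloor$, the eventual bound $k(n_k,x)\leq\alpha''n_k\leq m_k$ together with the monotonicity of $\widehat{M}$ gives $M_{n_k}(x)\leq\widehat{M}_{m_k}(x)<\psi(\hat{r}_{m_k})=\psi(r_{n_k})$ for infinitely many $k$, as required. The statement extends to $\mu$-a.e.\ $x\in[0,1]\setminus A$ by ergodicity, since such $x$ enter $A$ at some finite time and eventual/i.o.\ statements are insensitive to initial finite segments of the orbit.

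The main obstacle is really just bookkeeping: one must confirm that the rescaled sequences $\hat{r}_m$ retain the monotonicity of $m\mapsto m\hat{\mu}(B(\tilde{x},\hat{r}_m))$ and the mild decay regularity needed to verify condition (A2) for $\hat{f}$ at $\tilde{x}$, across the ceiling/floor reindexing; both are routine but need to be handled carefully. Once they are in hand, everything reduces to the two-sided sandwich of $k(n,x)$ around $\alpha n$ applied to the monotone process $\widehat{M}_m$.
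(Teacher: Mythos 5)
Your proposal follows the paper's approach exactly: apply Theorem~\ref{the:lowerbound1} to the induced first-return system $(\hat f,\hat\mu)$ on $A$ (with condition (A2) for $\hat f$ supplied by Proposition~\ref{prop:A2forpwexp}), and then transfer the conclusions through the identity $M_n(x)=\widehat{M}_{k(n,x)}(x)$ together with $k(n,x)/n\to\mu(A)$ a.s. The paper defers the reindexing bookkeeping to \cite[Theorem~2]{KKP}; your explicit write-out — the rescaled radii $\hat r_m$ via $\lceil m/\alpha'\rceil$ and $\lfloor m/\alpha''\rfloor$, the convergence/divergence comparisons using the monotonicity of $y\mapsto y e^{-y}$ on $(1,\infty)$, and the two-sided sandwich $k(n,x)\in[(1-\epsilon)\mu(A)n,(1+\epsilon)\mu(A)n]$ applied to the monotone process $\widehat M_m$ — is exactly the content of that reference, so the two arguments coincide.
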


\begin{remark}
  We have stated the result only for $a \in (0,1)$. It is possible to
  consider also $a \geq 1$ where the $f$-invariant ergodic measure
  $\mu$ is no longer finite. However, we do not get significant
  improvements over results obtained in \cite{KKP} due to the (almost
  sure) fluctuations in $R(x)$.
\end{remark}

\begin{proof} 
  By Proposition~\ref{prop:A2forpwexp}, condition (A2) holds for
  $\hat{f}$ and $\mu$-a.e.\ $\tilde{x}\in[0,1]$. Thus $\hat{f}$
  satisfies the assumptions of Theorems~\ref{the:lowerbound1}.
  
  The proof then follows step by step that of
  \cite[Theorem~2]{KKP}. The only modification is that bounding
  sequences for $M_n(x)$ are determined via
  Theorem~\ref{the:lowerbound1} as
  applied to the map $\hat{f}$ to bound $\widehat{M_k}(x)$.  Here
  $k\equiv k(n,x)$ with $k(n,x)/n\to\mu(A)$ almost surely.
\end{proof}

\subsubsection{An example with extremal index
  $\theta=0$.}\label{sec.theta-zero}

We consider again the Manne\-ville--Pomeau map given by equation
\eqref{eq:LSV}, and take an observable function of the form
$\phi(x)=\psi(d(x,0))$, thus maximized at the point
$\tilde{x}=0$.  It is shown in \cite{FFTV} that the distribution
for the maxima follows a degenerate extreme value law with
extremal index equal to zero (under a scaling sequence given by
\eqref{eq.un-seq}). We obtain the following almost sure result.

\begin{proposition}\label{prop.lsv-ei}
  Suppose $f \colon [0,1]\to[0,1]$ is given by equation
  \eqref{eq:LSV} with $a \in (0,1)$, and $\phi(x)=\psi(\dist(x,0))$,
  with $\psi \colon [0,\infty)\to\mathbbm{R}$ monotone
    decreasing. Then we have the almost sure result $\mu(v_n\leq
    M_n\leq u_n \text{ ev.})=1$, where $u_n$ and $v_n$
    satisfy
  \[
  \mu(X_1>v_n) \geq \biggl(\frac{c\log\log n}{n}
  \biggr)^{1-a},\quad \mu(X_1>u_n) \leq
  \biggl(\frac{1}{n(\log n)^{2}} \biggr)^{1-a},
  \]
  with $c>c_0$, for some $c_0>0$ (depending on the density of
  $\mu$).
\end{proposition}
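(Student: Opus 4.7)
The plan is to reduce Proposition \ref{prop.lsv-ei} to a shrinking-target problem for the induced map $\hat{f} = f^R \colon A \to A$ with $A = [1/2,1]$, as in the proof of Proposition \ref{prop.lsv-ind}, but while handling the fact that the maximising point $\tilde{x} = 0$ is the neutral fixed point and lies outside $A$. The geometric key is that for $y \in A$ with $R(y) > 1$, one has $f(y) = 2y-1 \in (0,1/2]$, and since $f(x) > x$ on $(0,1/2)$ the excursion $\{f^k(y) : 1 \leq k < R(y)\}$ moves monotonically away from $0$. Hence the minimum distance to $0$ during one excursion equals exactly $2y - 1$, so for $\mu$-a.e.\ $x \in A$,
\[
  M_n(x) \geq \psi(r) \iff \hat{f}^j(x) \in [1/2,\, 1/2 + r/2]
  \text{ for some } 0 \leq j \leq k(n,x),
\]
where $k(n,x)$ is the number of returns to $A$ up to time $n$. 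Birkhoff applied to $R \in L^1(\hat{\mu})$ (finite since $a \in (0,1)$) yields $k(n,x) = c_A\, n(1+o(1))$ a.s., with $c_A = \mu(A) > 0$. Since $\hat{\mu}$ has a BV density on $A$ that is bounded and bounded below near $1/2$, we have $\hat{\mu}([1/2,1/2+r/2]) \asymp r$, whereas $\mu(B(0,r)) \asymp r^{1-a}$ near $r=0$. The hypotheses then translate into $r_n^{(v)} := \psi^{-1}(v_n) \gtrsim \log\log n / n$ and $r_n^{(u)} := \psi^{-1}(u_n) \lesssim 1/(n(\log n)^2)$, up to constants depending only on $a$ and the density of $\mu$.

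For the \emph{upper bound}, I apply the first Borel--Cantelli Lemma directly on the induced dynamics. Because $r_n^{(u)}$ is decreasing, if an orbit of $f$ enters $B(0,r_n^{(u)})$ by some time $n \leq 2j/c_A$, then the induced orbit must enter $[1/2,\, 1/2 + r_{\lfloor 2j/c_A\rfloor}^{(u)}/2]$ at some induced time $\leq j$. Summability
\[
  \sum_{j \geq 1} \hat{\mu}\bigl([1/2,\, 1/2 + r_{\lfloor 2j/c_A\rfloor}^{(u)}/2]\bigr) \lesssim \sum_{j \geq 1} \frac{1}{j (\log j)^2} < \infty,
\]
together with Borel--Cantelli, gives $\hat{f}^j(x) \notin [1/2,\, 1/2 + r_{\lfloor 2j/c_A\rfloor}^{(u)}/2]$ for all large $j$, which translates back to $M_n(x) \leq u_n$ for all large $n$.

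For the \emph{lower bound}, I apply Theorem \ref{the:lowerbound1}(1) to $(\hat{f}, \hat{\mu})$ with target $\tilde{y} = 1/2$ and radii $\rho_k = r_{\lfloor 2k/c_A\rfloor}^{(v)}$, provided that condition (A2) holds at $\tilde{y} = 1/2$. With $\hat{\mu}(B(1/2,\rho_k/2)) \asymp \rho_k \geq c' \log\log k / k$, the Robbins--Siegmund series becomes
\[
  \sum_k \hat{\mu}\bigl(B(1/2,\rho_k/2)\bigr) e^{-k \hat{\mu}(B(1/2,\rho_k/2))}
  \lesssim \sum_k \frac{\log\log k}{k (\log k)^{c'' c}},
\]
which converges once $c$ exceeds a threshold $c_0$ determined by the density of $\hat{\mu}$ at $1/2$ (equivalently, by the density of $\mu$). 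Theorem \ref{the:lowerbound1}(1) then gives $\hat{M}_k(x) \geq \psi(\rho_k)$ for all large $k$; pulling back via $k(n,x) \asymp c_A n$ yields $M_n(x) \geq v_n$ for all large $n$.

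The main obstacle is verifying condition (A2) for $\hat{f}$ at the distinguished boundary point $\tilde{y} = 1/2$, since the $\mu$-a.e.\ verification provided by Proposition \ref{prop:A2forpwexp} does not automatically cover this single point, and $1/2$ is also a discontinuity of $\hat{f}$. Nonetheless I expect (A2) to hold by a direct estimate of $\hat{\mu}(B(1/2,r) \cap \hat{f}^{-j} B(1/2,r))$, using exponential decay of correlations for $\hat{f}$ on $A$ together with the fact that $1/2$ is not a periodic point of $\hat{f}$, so that the short-time overlaps decay at the required rate.
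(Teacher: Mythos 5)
Your proposal takes a genuinely different route from the paper. The paper works directly with the parabolic fixed point at $0$: it computes $\mu(A^{(1)}_n)$ (the set $[\hat r_n, r_n]$ of points in $B(0,r_n)$ escaping in one step) from the density asymptotics $\rho(x)=H(x)/x^a$, obtaining $\mu(A^{(1)}_n)\sim C_3\,\mu(X_1>v_n)^{1/(1-a)}$, then feeds this into the escape-rate identities \eqref{eq.escaperate}--\eqref{eq.escaperate2} (the degenerate extremal-index framework of Section~\ref{sec.clustering}, with $\theta=0$) and finishes with the Cauchy-condensation argument of Theorem~\ref{the:lowerbound3}. You instead perform a full inducing reduction via the geometric observation that, during an excursion from $y\in A$ with $R(y)>1$, the minimum distance to $0$ equals exactly $2y-1$, converting the parabolic shrinking target at $\tilde x=0$ for $f$ into a non-degenerate shrinking target at $\tilde y=1/2$ for $\hat f$. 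This is an attractive alternative: it makes the exponent $1-a$ conceptually transparent as the mismatch between $\mu(B(0,r))\asymp r^{1-a}$ and $\hat\mu([1/2,1/2+r/2])\asymp r$, and it lets you work entirely with the uniformly expanding induced map, for which exponential decay of correlations and Theorem~\ref{the:lowerbound1} are available (whereas the paper implicitly leans on escape-rate results from~\cite{FFT3,F} for a non-uniformly hyperbolic map). Your threshold computation and the reduction of the upper bound to First Borel--Cantelli on $\hat f$ are sound; there is only a cosmetic indexing slip in the upper bound where you write $r^{(u)}_{\lfloor 2j/c_A\rfloor}$ --- the implication you need goes through the \emph{larger} target $r^{(u)}_{\lfloor j/(2c_A)\rfloor}$ since $k(n,x)\leq 2c_A n$ for large $n$, but since both give summable series this is harmless.

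The genuine gap, which you correctly flag but do not close, is condition (A2) for $(\hat f,\hat\mu)$ at the boundary point $\tilde y=1/2$. This is not covered by Proposition~\ref{prop:A2forpwexp}, which is an a.e.\ statement, and the point $1/2$ is an accumulation point of the discontinuities of $\hat f$ (the endpoints of the sets $\{R=n\}$). This is precisely the same issue the paper faces for the Gau\ss{} map at $\tilde x=0$ in Corollary~\ref{cor.gauss}, where it asserts the verification "is standard, and is left out." The expected verification should use that $\hat f$ is Markov with full branches and bounded distortion, so $\hat\mu\bigl(B(1/2,\rho)\cap\hat f^{-j}B(1/2,\rho)\bigr)=O(\hat\mu(B(1/2,\rho))^2)$ uniformly in $j\geq1$, summing the contributions of the cylinders $\{R=n\}\subset B(1/2,\rho)$; but since the whole lower bound hinges on it, stating that you "expect" (A2) to hold is a real omission rather than a detail, and a short lemma to this effect should be supplied.
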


We prove Proposition~\ref{prop.lsv-ei} in
Section~\ref{sec.pf.prop.lsv-ei}.  This result also refines
estimates obtained in \cite{GHPZ,HNT2}, especially on the lower
bound sequence $v_n$. The upper bound sequence (as stated) can be
refined easily using the First Borel--Cantelli Lemma. This result
illustrates that when $\theta=0$, we can expect non-standard
growth rates for the maxima.  The exponent $1-a$ arises
due to the presence of the non-hyperbolic fixed point
$\tilde{x}=0$.

\subsubsection{On a general dichotomy criteria}

Within this section we have exhibited processes giving rise to a
non-trivial extremal index. These processes are created by using
underlying periodic phenomena of the dynamical system
process. More broadly, clustering can arise in more general
settings, see \cite{Embrechts,Letal}, and it is therefore natural
to ask whether the conclusion of Theorem~\ref{thm.clustering1} is
applicable in wider scenarios. We have given in
Section~\ref{sec.example.dich} a (non-dynamical) example to
illustrate that this is still the case. However, a general
criteria is yet to be fully developed on determining the
sequences $u_n$ for which a zero--one law applies to $\mu(M_n\leq
u_n\,\mathrm{i.o.})$. We consider the following question.

\begin{problem} For what class of stationary stochastic processes $(X_n)$ does the following hold?
There exists a constant
$\theta\in(0,1]$ such that
\begin{enumerate}
\item If $u_n$ is such that
  \[
  \sum_{n=1}^{\infty} \mu(X_1>u_n)=\infty \quad \textrm{and} \quad
  \sum_{n=1}^{\infty} \mu (X_1>u_n) e^{-n\theta\mu(X_1>u_n)} < \infty
  \]
  then $\mu(M_n\geq u_n \text{ ev.})=1$;
\item If $u_n$ is such that
  \[
  \sum_{n=1}^{\infty} \mu(X_1>u_n) = \infty \quad \textrm{and} \quad
  \sum_{n=1}^{\infty} \mu(X_1>u_n) e^{-n\theta\mu(X_1>u_n)} = \infty
  \]
  then $\mu(M_n\leq u_n \text{ i.o.}) = \mu (M_n\geq u_n
  \text{ i.o.}) = 1$.
\end{enumerate}
\end{problem}

For the i.i.d.\ case, these items are both valid for $\theta =
1$. However a full classification of processes $(X_n)$ which
satisfy these criteria for given $\theta \in (0,1)$ is
unknown. For certain maximum processes with extremal index $\theta
\in (0,1)$ we have shown that these dichotomy conditions apply.

\section{Almost sure bounds for $M_n$ for non-uniformly
  hyperbolic systems}\label{sec.alternative}
	
In this section we consider almost sure bounds on $M_n$
for a wide class of hyperbolic systems. These include systems where
$(f,\mathcal{X},\mu)$ is modelled by a Young tower \cite{Young}. We
emphasize that the results in this section are valid for some higher
dimensional systems, while results of previous sections are for
interval maps. For higher dimensional systems, e.g.\ such as those
that admit Sinai--Ruelle--Bowen measures, obtaining distributional
convergence of the maxima requires the blocking arguments used for
one-dimensional systems to be significantly modified. This
includes having additional regularity constraints placed on $\mu$
(depending upon the strength of result obtained). With the present
techniques available, we obtain results towards Case (1) of
Theorems~\ref{the:lowerbound3} and~\ref{thm.hyp1}. We consider the
following assumption on the distributional convergence.
\begin{enumerate}
\item[(A5)] Given constants $\sigma_1,\sigma_2>1$, there is a set
  $\mathcal{M}_r$, with $\mu(\mathcal{M}_r)\leq C_1|\log
  r|^{-\sigma_1}$, such that for all
  $\tilde{x}\not\in\mathcal{M}_r$,
  \begin{equation}\label{eq.mn-error2}
    \Biggl| \mu \Biggl(\, x : \sum_{k=0}^{t/\mu(B_r(\tilde{x}))}
    \mathbbm{1}_{B_r(\tilde{x})}(f^k(x))=0 \, \Biggr) -
    e^{-\theta t} \Biggr| \leq C_2|\log r|^{-\sigma_2},
  \end{equation}
  for all $t\geq 0$.
\end{enumerate}
Assumption (A5) is recognised as an approximate \emph{exponential law}
for entrance times to shrinking balls. For certain non-uniformly
hyperbolic systems modelled by Young towers, (A5) is shown to hold,
see for example \cite{CC, HW} (in the case $\theta=1$), where a more
general Poisson laws result can also be obtained.  Examples of such
systems include those with Axiom A attractors, and the H\'enon map
family for Benedicks--Carleson parameters \cite{BY}. To bring
assumption (A5) in line with distribution results for maxima $M_n$,
consider the observable function
$\phi(x)=\psi(\mathrm{dist}(x,\tilde{x}))$, and a sequence $r_n\to
0$. Set $u_n=\psi(r_n)$ and $t\equiv
t_n=n\mu(B(\tilde{x},r_n))$. Here we allow the possibility that
$t_n\to 0$ or $t_n\to\infty$.  Then equation \eqref{eq.mn-error2}
becomes
\begin{equation}\label{eq.mn-hyp-error}
  \Bigl| \mu \bigl(\, x : M_n(x) < u_n \,\bigl) - e^{-n \theta \mu (
    B(\tilde{x},r_n) )} \Bigr| \leq C_2|\log r_n|^{-\sigma_2}.
\end{equation}
This has similarities to the results obtained in
Section~\ref{sec.proof.preliminary} for one-dimensional systems, in
particular Corollary~\ref{cor.lem:Ml}. However, the approximation of
\eqref{eq.mn-hyp-error} is not uniform over the ball center
$\tilde{x}$. In order to mirror Corollary~\ref{cor.lem:Ml}, we require
$\tilde{x}\not\in\mathcal{M}_{r_n}$ for all $n$, and clearly this
condition depends on the sequence $(r_n)$. Further arguments are
therefore required to avoid the existence of an infinite subsequence
$(r_{j_k})$ for which $\tilde{x}\in\mathcal{M}_{r_{j_k}}$. For
hyperbolic systems, the presence of the set $\mathcal{M}_r$ arises
from the regularity assumptions (or lack thereof) placed on the
measure $\mu$. These issues are discussed in \cite{CC,HRS,HW}. When
stronger regularity properties are known (or assumed) about the
measure $\mu$, then it can be shown that $\mu(\limsup_{r\to
  0}\mathcal{M}_r)=0$. This applies for certain uniformly hyperbolic
systems and billiard models, see \cite{CFFHN,CNZ,GHN, PS}.

To state our result, we also need existence of a local dimension
$d_{\mu}(\tilde{x})$ at $\tilde{x}$. This is defined to be the
limit
\[
d_{\mu}(\tilde{x})=\lim_{r\to 0}\frac{\log
  \mu(B(\tilde{x},r))}{\log r},
\]
whenever this limit exists. For a wide range of hyperbolic
systems, the value $d_{\mu}(\tilde{x})$ exists for $\mu$-a.e.\
$\tilde{x}\in\mathcal{X}$, see \cite{BPS}.

\begin{theorem}\label{thm.hyp2}
  Suppose $(f,\mathcal{X},\mu)$ is a measure preserving system,
  and (A5) holds.  Given $\tilde{x}\in\mathcal{X}$, let
  $\phi(x)=\psi(\mathrm{dist}(x,\tilde{x}))$.  Suppose that $u_n$
  is a sequence such that $\mu(X_1>u_n)\geq c\theta^{-1}\log\log n/n$
  for some $c>1$, and that the local dimension
  $d_{\mu}(\tilde{x})$ exists.  Then for
  $\mu$-a.e.\ $\tilde{x}\in\mathcal{X}$ we have $\mu(M_n\geq
  u_n \text{ ev.})=1.$
\end{theorem}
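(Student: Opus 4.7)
The plan is to adapt the Cauchy-condensation argument used for case~(1) of Theorem~\ref{the:lowerbound3} (and Theorem~\ref{thm.hyp1}) to this higher-dimensional setting, with the approximate exponential law (A5) replacing the sharp one-dimensional estimates and with the local dimension hypothesis used to control the $\tilde{x}$-exceptional sets $\mathcal{M}_r$.

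I would first set $r_n=\psi^{-1}(u_n)$ so that $\{X_1>u_n\}=B(\tilde{x},r_n)$ and the hypothesis reads $\mu(B(\tilde{x},r_n))\geq c\theta^{-1}\log\log n/n$. Without loss of generality one may replace $u_n$ by the largest sequence satisfying the hypothesis (which only strengthens the desired conclusion), so that equality holds. The case of bounded $u_n$ being immediate from ergodicity, assume $u_n\to\infty$, hence $r_n\to 0$. Existence of the local dimension $d_\mu(\tilde{x})>0$ then gives $r_n\asymp(\log\log n/n)^{1/d_\mu(\tilde{x})}$ for $\mu$-a.e.\ $\tilde{x}$, so that $|\log r_n|\asymp\log n$.

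Next, I would fix $a\in(1,c)$ and set $n_k=\lfloor a^k\rfloor$. By monotonicity of $(M_n)$ and $(u_n)$, it suffices to prove that for $\mu$-a.e.\ $\tilde{x}$ and $\mu$-a.e.\ $x$, $M_{n_k}(x)\geq u_{n_{k+1}}$ for all large $k$; indeed, for $n\in[n_k,n_{k+1}]$ this implies $M_n\geq M_{n_k}\geq u_{n_{k+1}}\geq u_n$. Provided $\tilde{x}\notin\mathcal{M}_{r_{n_{k+1}}}$, assumption (A5) with $t_k=n_k\mu(B(\tilde{x},r_{n_{k+1}}))$ yields
\[
\mu\bigl(M_{n_k}<u_{n_{k+1}}\bigr)\leq e^{-\theta t_k}+C_2\,|\log r_{n_{k+1}}|^{-\sigma_2}.
\]
With the normalization, $t_k\sim(c/a)\theta^{-1}\log\log n_{k+1}$, giving $e^{-\theta t_k}\sim k^{-c/a}$, summable since $c/a>1$; the error term is $\sim k^{-\sigma_2}$, summable since $\sigma_2>1$. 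Borel--Cantelli in the variable~$x$ then yields $M_{n_k}\geq u_{n_{k+1}}$ eventually for $\mu$-a.e.\ $x$.

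The hard part is guaranteeing $\tilde{x}\notin\mathcal{M}_{r_{n_{k+1}}}$ for all large $k$. Using $\mu(\mathcal{M}_r)\leq C_1|\log r|^{-\sigma_1}$ together with $|\log r_{n_{k+1}}|\asymp k$, the series $\sum_k\mu(\mathcal{M}_{r_{n_{k+1}}})$ converges and a Borel--Cantelli argument in the $\tilde{x}$-variable handles it. The minor circularity that the normalized $r_n$ itself depends on $\tilde{x}$ can be resolved by first restricting attention to a full-measure set on which $|\log r_n|\geq\eta\log n$ for a fixed $\eta>0$ (available from the local dimension hypothesis), thereby dominating $r_{n_{k+1}}$ by a deterministic majorant before applying Borel--Cantelli.
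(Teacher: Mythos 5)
Your proposal follows essentially the same strategy as the paper's proof: Cauchy condensation along $n_k=\lfloor a^k\rfloor$, application of (A5) at scale $r_{n_{k+1}}$ to get the summable bound $\mu(M_{n_k}<u_{n_{k+1}})\lesssim k^{-c/a}+k^{-\sigma_2}$, and two Borel--Cantelli applications (one in the orbit variable $x$, one over the centre $\tilde{x}$ using $\sigma_1>1$ and the local dimension to ensure $|\log r_{n_k}|\asymp k$). The circularity you flag --- that the normalised radii $r_n=r_n(\tilde{x})$ enter the Borel--Cantelli over $\tilde{x}$ --- is a real subtlety which the paper passes over silently (it writes $\mu(\limsup_k\mathcal{M}_{r_{n_k}})=0$ as if the $r_{n_k}$ were deterministic). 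However, the remedy you sketch does not quite close it: dominating $r_{n_{k+1}}$ by a deterministic upper bound $\rho_k$ gives no control of $\mathcal{M}_{r_{n_{k+1}}}$ from $\mathcal{M}_{\rho_k}$, since (A5) asserts no monotonicity of $r\mapsto\mathcal{M}_r$. A route that does work is to apply (A5) at a deterministic minorant $\rho_k\le r_{n_{k+1}}$ and pass the bound through the ball inclusion $B(\tilde{x},\rho_k)\subset B(\tilde{x},r_{n_{k+1}})$, verifying via the local dimension that $n_k\,\mu(B(\tilde{x},\rho_k))$ remains commensurate with $\log\log n_{k+1}$; this does not change the shape of your argument, but it is a detail that should be written out.
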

We make several remarks on the proof and scope of this result. For the
proof of the result, we by-pass the influence of the set
$\mathcal{M}_r$ to obtain a result similar to
Corollary~\ref{cor.lem:Ml}. We can then apply the Cauchy-condensation
method used for proving Theorem~\ref{the:lowerbound3}. With the
current techniques available this is the best we can achieve. The
arguments used within Section~\ref{sec.pf.main-case2} cannot easily be
adapted and new ideas are needed.

Within Theorem~\ref{thm.hyp2} we require $\sigma_1,\sigma_2>1$. However, in
certain applications the possibility $\sigma_1,\sigma_2\in(0,1)$ can
arise, see \cite{HW}.  In this case, we get weaker bounds on sequence
$u_n=\psi(r_n)$, namely having the requirement $\mu(X>u_n)\geq
e^{(\log n)^{\gamma'}}/n$ for some $\gamma'\in(0,1)$, see
Section~\ref{sec.pf.thm.hyp2}.

A further remark is that having an assumption on the existence of a
local dimension can be weakened. From the proof, we generally require
quantitative bounds on the decay of $\mu(B(\tilde{x},r_n))$ along
certain sequences $r_n\to 0$.

\subsection{Intermediate growth rate sequences for maxima}\label{sec.intermediate}

In this section we consider non-decreasing sequences $(u_n)$ for
which the following statement applies
\[
\mu(M_n > u_n \text{ i.o.})>0 \qquad \text{and} \qquad
\mu(M_n\leq u_n \text{ i.o.})>0.
\]
Clearly the dichotomy results obtained in
e.g.\ Theorems~\ref{the:lowerbound1} and \ref{thm.clustering1}
fully describe these sequences. However, in the case of the
dynamical systems for which Theorem~\ref{thm.hyp2} applies, we
can obtain partial results using ergodicity of the dynamical
system. Since $(u_n)$ is non-decreasing, we have that $\{X_n >
u_n \text{ i.o.}\}$ is invariant mod $\mu$. Moreover, if the set
$\{ \tilde{x}\}$ has zero measure, we have $\{M_n > u_n \text{
  i.o.}\} = \{X_n > u_n \text{ i.o.}\}$ mod $\mu$.  It then
follows by ergodicity that if $\mu(M_n > u_n \text{ i.o.})>0$,
then $\mu(M_n > u_n \text{ i.o.})=1$.  On the other hand,
$\mu(M_n\leq u_n \text{ i.o.})>0$ gives
$\mu(\mathcal{H}_{\mathrm{ea}}(\mathbf{B})) = \mu(M_n> u_n
\text{ ev.})<1$. Thus, $\mu(M_n> u_n \text{ ev.})=0$ by
ergodicity and invariance of
$\mathcal{H}_{\mathrm{ea}}(\mathbf{B})$ (see
\cite[Lemma~1]{KKP}). This yields $\mu(M_n\leq u_n \text{
  i.o.})=1$. Hence, both of the measures above will be 1 (if
they are positive). We state the following result whose proof is
similar to that of \cite[Proposition~2]{KKP}.

\begin{theorem}\label{thm.intermediate}
  Suppose that $(f,\mathcal{X},\mu)$ is an ergodic measure
  preserving system satisfying (A5).  Consider the observable
  $\phi(x)=\psi(\dist(x,\tilde{x}))$ with $\psi(y)\to\infty$ as
  $y\to 0$.  Suppose that $B(\tilde{x},r_n)$ are such that
  $\mu(B(\tilde{x},r_n))\leq c/n$, for $c>0$.  For the sequence
  $u_n=\psi(r_n)$, we have
  \[
  \mu(M_n\leq u_n \text{ i.o.})=1,
  \]
  that is,
  \[
  \mu(\mathcal{H}_{\mathrm{ea}}) = \mu(M_n> u_n \text{ ev.})=0.
  \]
\end{theorem}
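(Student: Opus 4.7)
The plan is to use the $f$-invariance of $\mathcal{H}_{\mathrm{ea}}$ together with ergodicity to reduce the claim to showing that $\mu(M_n\leq u_n\text{ i.o.})>0$, and then to extract this strict positivity from the exponential law (A5) applied along an auxiliary deterministic sequence of radii.

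By \cite[Lemma~1]{KKP}, $\mathcal{H}_{\mathrm{ea}}$ is $f$-invariant modulo $\mu$-null sets, so ergodicity forces $\mu(\mathcal{H}_{\mathrm{ea}})\in\{0,1\}$. Since $\mathcal{H}_{\mathrm{ea}}^{\complement}=\{M_n\leq u_n\text{ i.o.}\}$, it suffices to prove $\mu(M_n\leq u_n\text{ i.o.})>0$. By the reverse Fatou inequality,
\[
\mu(M_n\leq u_n\text{ i.o.})=\mu\bigl(\limsup_{n\to\infty}\{M_n\leq u_n\}\bigr)\geq\limsup_{n\to\infty}\mu(M_n\leq u_n),
\]
so we are reduced to exhibiting a subsequence $(m_k)$ along which $\mu(M_{m_k}\leq u_{m_k})$ is bounded below by a positive constant.

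To produce such a subsequence, introduce the deterministic auxiliary radii $\rho_k:=e^{-k^2}$. Since $\sigma_1>1$, $\sum_k\mu(\mathcal{M}_{\rho_k})\leq C_1\sum_k k^{-2\sigma_1}<\infty$, and the First Borel--Cantelli Lemma gives a $\mu$-full-measure set of points $\tilde{x}$ lying outside $\mathcal{M}_{\rho_k}$ for all sufficiently large $k$; we restrict to such $\tilde{x}$. Set $m_k:=\lceil c/\mu(B(\tilde{x},\rho_k))\rceil$, so that $m_k\mu(B(\tilde{x},\rho_k))\in[c,\,c+\mu(B(\tilde{x},\rho_k))]$, which tends to $c$ as $k\to\infty$. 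Applying (A5) at radius $\rho_k$ and time $m_k$ gives
\[
\mu\bigl(M_{m_k}\leq\psi(\rho_k)\bigr)\geq e^{-\theta m_k\mu(B(\tilde{x},\rho_k))}-C_2 k^{-2\sigma_2}\geq\tfrac{1}{2}e^{-\theta c}
\]
for all large $k$. The inequality $c/m_k\leq\mu(B(\tilde{x},\rho_k))$, combined with the hypothesis $\mu(B(\tilde{x},r_{m_k}))\leq c/m_k$, yields $\mu(B(\tilde{x},r_{m_k}))\leq\mu(B(\tilde{x},\rho_k))$, hence $r_{m_k}\leq\rho_k$, and therefore $u_{m_k}=\psi(r_{m_k})\geq\psi(\rho_k)$. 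Consequently $\mu(M_{m_k}\leq u_{m_k})\geq\mu(M_{m_k}\leq\psi(\rho_k))\geq\tfrac{1}{2}e^{-\theta c}$ along $(m_k)$, which provides the required strict positivity.

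The main technical obstacle is that the bad set $\mathcal{M}_r$ in (A5) depends on $r$, and we cannot invoke (A5) directly with $r=r_n$ since we have no control over whether $\tilde{x}\in\mathcal{M}_{r_n}$ for infinitely many $n$. The auxiliary sequence $\rho_k$, together with the summability afforded by $\sigma_1>1$, is what allows a Borel--Cantelli bypass; one then transfers the bound back to the prescribed radii via the monotonicity of $r\mapsto\mu(B(\tilde{x},r))$.
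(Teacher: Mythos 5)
Your proof is correct and, at a high level, follows the same strategy as the paper: obtain a positive lower bound for $\mu(M_n\leq u_n)$ along a subsequence via (A5), and then conclude through the zero--one law supplied by ergodicity and invariance of $\mathcal{H}_{\mathrm{ea}}$ \cite[Lemma~1]{KKP}. Where your argument genuinely differs — and improves on — the paper is in the treatment of the exceptional sets $\mathcal{M}_r$ appearing in (A5). The paper simply asserts that (A5) ``yields that (\ref{eq.mn-hyp-error}) holds true'' and then lets $n\to\infty$, without addressing the fact that $\tilde{x}$ could lie in $\mathcal{M}_{r_n}$ for infinitely many $n$; because $\mu(\mathcal{M}_{r_n})$ decays only like a power of $\log n$, the First Borel--Cantelli Lemma does not apply along the full sequence of $r_n$'s. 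Your workaround is cleaner than the one the paper uses for the companion Theorem~\ref{thm.hyp2}: rather than passing to a geometric subsequence of the given radii (which requires control of the local dimension), you introduce the \emph{deterministic} sequence $\rho_k=e^{-k^2}$, for which $\sum_k\mu(\mathcal{M}_{\rho_k})\leq C_1\sum_k k^{-2\sigma_1}<\infty$, and then manufacture the times $m_k=\lceil c/\mu(B(\tilde{x},\rho_k))\rceil$ so that $m_k\mu(B(\tilde{x},\rho_k))\to c$. Since the final transfer $\mu(B(\tilde{x},r_{m_k}))\leq c/m_k\leq\mu(B(\tilde{x},\rho_k))$ needs nothing beyond the hypothesis, your proof never invokes $d_\mu(\tilde{x})$. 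The reverse Fatou step and the paper's ``nestedness'' argument are interchangeable here.

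One small imprecision worth repairing: from $\mu(B(\tilde{x},r_{m_k}))\leq\mu(B(\tilde{x},\rho_k))$ you cannot conclude $r_{m_k}\leq\rho_k$ unless $r\mapsto\mu(B(\tilde{x},r))$ is strictly increasing. The fix is immediate, though: if $r_{m_k}>\rho_k$ while the two ball-measures are equal, then $B(\tilde{x},r_{m_k})\setminus B(\tilde{x},\rho_k)$ is $\mu$-null, and by invariance the finite union $\bigcup_{j<m_k}f^{-j}\bigl(B(\tilde{x},r_{m_k})\setminus B(\tilde{x},\rho_k)\bigr)$ is $\mu$-null as well; hence $\{M_{m_k}\leq u_{m_k}\}$ and $\{M_{m_k}\leq\psi(\rho_k)\}$ coincide modulo $\mu$, and the required inequality $\mu(M_{m_k}\leq u_{m_k})\geq\mu(M_{m_k}\leq\psi(\rho_k))$ still holds. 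It would also be worth stating explicitly that the conclusion is proved for $\mu$-a.e.\ $\tilde{x}$ (you do restrict to such $\tilde{x}$ via Borel--Cantelli, which is the right move given the form of (A5), but the theorem's statement leaves this implicit).
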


\begin{proof}
  We may assume that $\mu(B(\tilde{x},r_n)) = c/n$, and hence
  that $(u_n)$ is non-decreasing. Assumption (A5) yields that
  (\ref{eq.mn-hyp-error}) holds true. Hereby, we get $\mu(M_n\leq
  u_n)\to e^{-c'} >0$ for some $c'<\infty$. Furthermore, we
  recall that
  \[
  \mu(\mathcal{H}_{\mathrm{ea}}) = \mu(M_n> u_n \text{
    ev.})=\mu \Biggl( \bigcup^{\infty}_{i=1}
  \bigcap^{\infty}_{n=i} \{M_n \leq u_n\}^\complement \Biggr).
  \]
  We observe that $\mu(M_n> u_n \text{ ev.})= \lim_{i\to
    \infty} \mu(\bigcap^{\infty}_{n=i} \{M_n \leq u_n\}^c) \leq
  1-e^{-c'}<1$ by nestedness.  By ergodicity and invariance mod
  $\mu$ of $\mathcal{H}_{\mathrm{ea}}$ \cite[Lemma~1]{KKP}, we
  conclude the statement.
\end{proof}

In the theorem above, it is possible that we have $\mu(M_n\leq
u_n \text{ ev.})=1.$ However if
$\sum_{n}\mu(B(\tilde{x},r_n))=\infty$, and $B(\tilde{x},r_n)$ is
a dynamical Borel--Cantelli sequence, then we instead have
$\mu(M_n> u_n \text{ i.o.})=1$. For a wide class of dynamical
systems, and $\mu$-typical $\tilde{x}$ this property is known to
hold, see \cite{Athreya, ChernovKleinbock, GNO, HNPV, HNT2}.

\section{Overview of the proofs and the blocking
  argument}\label{sec.overview-block}

Here we give an overview of our proofs. We first use an argument by
Galambos: From a dynamical Borel--Cantelli Lemma (for instance
\cite{Athreya, ChernovKleinbock, Kim}) we get from $\sum \mu (B
(\tilde{x}, r_n)) = \infty$ that almost surely $X_n \geq u_n$
infinitely often, and hence that $M_n \geq u_n$ holds infinitely often
almost surely. Therefore the set $\{ M_n < u_n \text{ ev.}  \}$ has
measure zero. We obtain
\begin{align*}
  \mu ( M_n < u_n \text{ i.o.} ) &= \mu ( \{ M_n < u_n \text{
    i.o.}  \} \setminus \{ M_n < u_n \text{ ev.} \}) \\ &= \mu \bigl(
  M_n < u_n \text{ and } M_{n+1} \geq u_{n+1} \text{ i.o.}  \bigr)
  \\ &= \mu \bigl( M_n < u_n \text{ and } X_{n+1} \geq u_{n+1} \text{
    i.o.} \bigr).
\end{align*}
We will use this equality to prove that $\mu ( M_n < u_n
\text{ i.o.} ) = 0$ and hence that $\mu ( M_n \geq u_n \text{
  ev.} ) = 1$. The idea is to use that for $l < n$
\[
\{ M_n < u_n \text{ and } X_{n+1} \geq u_{n+1} \} \subset \{ M_l
< u_n \text{ and } X_{n+1} \geq u_{n+1} \},
\]
and if $n - l$ is large, then
\begin{align}
  \mu \bigl( M_n < u_n \text{ and } X_{n+1} \geq u_{n+1} \bigr) & \leq
  \mu\bigl( M_l < u_n \text{ and } X_{n+1} \geq u_{n+1} \bigr) \nonumber
  \\ &\approx \mu ( M_l < u_n ) \mu ( X_{n+1} \geq u_{n+1}),
  \label{eq:splitMandX}
\end{align}
and this will be made precise using decay of correlation
estimates. Then, Proposition~\ref{prop:blocking} below is used to
estimate $\mu (M_l < u_n )$. This results in an estimate on
$\mu ( M_n < u_n \text{ and } X_{n+1} \geq u_{n+1} )$. Using
this estimate, it is shown that $\sum \mu( B(\tilde{x}, r_n))
\exp (- n \mu (B(\tilde{x},r_n)) ) < \infty$ implies that
\[
\sum_{n=1}^\infty \mu (M_n < u_n \text{ and } X_{n+1} \geq
u_{n+1} ) < \infty
\]
and this implies by Borel--Cantelli that
\[
\mu (M_n < u_n \text{ and } X_{n+1} \geq u_{n+1} \text{ i.o.}
) = 0.
\]
Hence $\mu ( M_n < u_n \text{ i.o.} ) = 0$ and $\mu ( M_n \geq
u_n \text{ ev.} ) = 1$. In this way we obtain the proof
of Theorem~\ref{the:lowerbound1}.

For some systems, it is difficult to get a good enough error
bound in the approximation
\[
\mu (M_l < u_n \text{ and } X_{n+1} \geq u_{n+1} ) \approx \mu
( M_l < u_n ) \mu ( X_{n+1} \geq u_{n+1} )
\]
which was used in \eqref{eq:splitMandX}. For such systems we use
instead the estimate
\[
  \mu (M_n < u_n \text{ and } X_{n+1} \geq u_{n+1} ) \leq \mu
  ( M_n < u_n ).
\]
In the end this only leads to a slightly weaker result. This is
the path taken to prove Theorem~\ref{the:lowerbound3}.  

We will now explain how the so-called blocking argument is used
to estimate $\mu (M_l < u )$. For general stochastic processes see
\cite{Embrechts, LLR}. Relevant to dynamical systems, the approach we describe is adapted 
from~\cite{Collet}. 

We have
\[
\{ M_l < u \} = \bigcap_{k=1}^l \{ X_k < u \} \subset \bigcap_{k
  \in I_l} \{ X_k < u \},
\]
where $I_l \subset \{1,2,\ldots,l\}$. We let $I_l$ consist of $q$
blocks of $p$ consecutive numbers in $\{1,2,\ldots, l\}$, each
block separated by $t$ numbers. Writing $I_l = \bigcup_{j=1}^q
J_j$ where $J_j$ are the blocks, we have
\[
\{ M_l < u \} \subset \bigcap_{j = 1}^q \bigcap_{k \in J_j} \{
X_k < u \}.
\]
The measure of $\bigcap_{k \in J_j} \{ X_k < u \}$ is
approximated by $1 - p \mu (X_1 \geq u) = 1 - p \mu (\phi \geq
u)$ and in this way we can estimate $\mu ( M_l < u )$ by $(1 -
p \mu (\phi \geq u))^q$. The error obtained by this estimate is
expressed in the following proposition by Collet.

\begin{proposition}[Collet] \label{prop:blocking}
  Suppose that $\mu$ is an ergodic probability measure. Let $s
  \in (0,\frac{1}{2}]$. With $l = q p + r$, $p = [l^s]$, $0 \leq
    r < p$ and $l$ large, we have for any $u$ and $t \in
    \mathbbm{N}$ that
  \[
  \bigl| \mu ( M_l < u ) - (1 - p \mu ( \phi \geq u ))^q
  \bigr| \leq \sum_{j = 1}^q (1 - p \mu (\phi \geq u ))^{q -
    j} \Gamma_j,
  \]
  where $\Gamma_{j}$ is given by
  \begin{multline*}
  \Gamma_j = \bigl| p \mu (\phi \geq u ) \mu (M_{(j - 1) (p +
    t)} < u ) - \Sigma_j \bigr| \\ + t \mu ( \phi \geq u ) + 2
  p \sum_{k=1}^p \expectation (\mathbbm{1}_{\phi \geq u}
  \mathbbm{1}_{\phi \geq u} \circ f^k)
  \end{multline*}
  and
  \[
  \Sigma_j = \sum_{k=1}^p \expectation ( \mathbbm{1}_{\phi \geq u }
  \mathbbm{1}_{M_{ (j - 1) (p + t)} < u } \circ f^{p + t - k} ),
  \]
	where $\expectation(\cdot)$ denotes expectation.
\end{proposition}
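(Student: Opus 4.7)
The plan is to implement a standard blocking/telescoping argument, an approach that goes back to Leadbetter in the mixing setting and that was adapted to the dynamical setting by Collet. Set $b = 1 - p\mu(\phi \geq u)$. For $1 \leq j \leq q$, define the $j$-th block of indices $J_j = \{(j-1)(p+t)+1, \ldots, (j-1)(p+t)+p\}$ and the block event $\mathcal{E}_j = \bigcap_{k \in J_j}\{X_k < u\}$; write $a_j = \mu\bigl(\bigcap_{i=1}^{j}\mathcal{E}_i\bigr)$ with $a_0 = 1$. Passing from $\{M_l < u\}$ to $\bigcap_i \mathcal{E}_i$ only drops inter-block gap indices, each contributing at most $\mu(\phi \geq u)$ by stationarity; this contribution is absorbed into the $t\mu(\phi \geq u)$ summands of $\Gamma_j$ once the telescoping below is applied.

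The heart of the argument is the algebraic identity
\[
a_q - b^q = \sum_{j=1}^q b^{q-j}\bigl(a_j - b\,a_{j-1}\bigr),
\]
which reduces the problem to bounding each single-step error $a_j - b\,a_{j-1} = p\mu(\phi \geq u)\,a_{j-1} - \mu\bigl(\bigcap_{i<j}\mathcal{E}_i \cap \mathcal{E}_j^c\bigr)$ by $\Gamma_j$. Three error sources emerge. First, Bonferroni applied to $\mathcal{E}_j^c = \bigcup_{k\in J_j}\{X_k \geq u\}$ bounds the difference between $\mu\bigl(\bigcap_{i<j}\mathcal{E}_i \cap \mathcal{E}_j^c\bigr)$ and $\sum_{k\in J_j}\mu\bigl(\bigcap_{i<j}\mathcal{E}_i \cap \{X_k \geq u\}\bigr)$ by $\sum_{k<k'\in J_j}\mu\bigl(\{X_k\geq u\}\cap\{X_{k'}\geq u\}\bigr)$, which after stationarity and symmetrising gives the $2p\sum_{k=1}^{p}\expectation(\mathbbm{1}_{\phi\geq u}\mathbbm{1}_{\phi\geq u}\circ f^k)$ term. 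Second, replacing $a_{j-1}$ by $\mu\bigl(M_{(j-1)(p+t)}<u\bigr)$ costs at most $t\mu(\phi\geq u)$ per step, yielding the middle summand of $\Gamma_j$. Third, recasting $\sum_{k\in J_j}\mu\bigl(\{M_{(j-1)(p+t)}<u\}\cap\{X_k\geq u\}\bigr)$ via $f$-invariance into the shifted form with $f^{p+t-k}$ produces exactly $\Sigma_j$, and the irreducible residual $\bigl|p\mu(\phi\geq u)\mu(M_{(j-1)(p+t)}<u) - \Sigma_j\bigr|$ is left as the first summand of $\Gamma_j$, to be controlled, when the proposition is applied, via decay of correlations at lag $t$.

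The main obstacle is bookkeeping: ensuring that the constant $2p$ and the precise $f^{p+t-k}$ shift in $\Sigma_j$ come out correctly. The $2p$ arises from symmetrising a sum over ordered pairs $k<k'$ in $J_j$ after invoking the stationarity identity $\mu(\{X_k\geq u\}\cap\{X_{k'}\geq u\}) = \expectation(\mathbbm{1}_{\phi\geq u}\mathbbm{1}_{\phi\geq u}\circ f^{|k-k'|})$; the shift in $\Sigma_j$ is chosen so that the single factor $\mathbbm{1}_{\phi\geq u}$ sits at a fixed reference position while the past-block event $\{M_{(j-1)(p+t)}<u\}$ is translated via $\expectation(g_1\cdot g_2\circ f^m) = \expectation(g_1\circ f^N\cdot g_2\circ f^{N+m})$ with $N = p+t-k$. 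Once these details are pinned down, multiplying by $b^{q-j}$ and summing over $j$ yields the stated bound.
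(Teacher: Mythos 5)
Your overall plan — the telescoping identity, and the three error sources (Bonferroni for the $2p$-term, the gap for the $t$-term, and the decay-of-correlations residual for $\Sigma_j$) — matches the skeleton of the paper's argument. The problem is the choice of iteration variable. You iterate on the block-intersection probability $a_j = \mu\bigl(\bigcap_{i\leq j}\mathcal{E}_i\bigr)$, whereas the proposition's $\Gamma_j$ and $\Sigma_j$ are written in terms of the full maximum $\mu(M_{(j-1)(p+t)}<u)$ and the indicator $\mathbbm{1}_{M_{(j-1)(p+t)}<u}$. These do not coincide: $\bigcap_{i<j}\mathcal{E}_i$ omits all $j-1$ gaps of size $t$, so $a_{j-1}-\mu(M_{(j-1)(p+t)}<u)$ and the corresponding difference between $\sum_{k\in J_j}\mu\bigl(\bigcap_{i<j}\mathcal{E}_i\cap\{X_k\geq u\}\bigr)$ and $\Sigma_j$ are of order $(j-1)t\mu(\phi\geq u)$, growing linearly in $j$. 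Your claim that "replacing $a_{j-1}$ by $\mu(M_{(j-1)(p+t)}<u)$ costs at most $t\mu(\phi\geq u)$ per step" therefore underestimates the cost, and the residual does not fit inside the single $t\mu(\phi\geq u)$ summand of $\Gamma_j$. The same underestimation appears at the very start, where the drop from $\{M_l<u\}$ to $\bigcap_i\mathcal{E}_i$, of total size $\sim qt\mu(\phi\geq u)$, is declared "absorbed into the $t\mu(\phi\geq u)$ summands once the telescoping is applied"; those summands are weighted by $b^{q-j}$ with $b<1$, and $\sum_{j=1}^q b^{q-j}$ can be far smaller than $q$, so there is nothing automatic to absorb it.

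The repair is to iterate directly on $c_j = \mu(M_{j(p+t)}<u)$, exactly as the paper does. Using the identity $\mathbbm{1}_{M_{m+p+t}<u} = \mathbbm{1}_{M_p<u}\,\mathbbm{1}_{M_t<u}\circ f^p\,\mathbbm{1}_{M_m<u}\circ f^{p+t}$ with $m=(j-1)(p+t)$, dropping the single gap factor $\mathbbm{1}_{M_t<u}\circ f^p$ at step $j$ costs one $t\mu(\phi\geq u)$ (not $(j-1)t\mu(\phi\geq u)$), and Bonferroni applied to the block $\mathbbm{1}_{M_p<u}$ against the history factor $\mathbbm{1}_{M_m<u}\circ f^{p+t}$ produces the $2p$-term and $\Sigma_j$ in precisely the stated form. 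The telescoping then starts from $c_0=1$ and delivers the bound for $\mu(M_{q(p+t)}<u)$, with the single remaining conversion $|\mu(M_l<u)-\mu(M_{q(p+t)}<u)|\leq qt\mu(\phi\geq u)$ handled once at the outset rather than folded into the $\Gamma_j$.
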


The proof of Proposition~\ref{prop:blocking} is by Collet
\cite{Collet}. For completeness, we include it in the appendix.

\section{Application of Proposition~\ref{prop:blocking} and preliminary estimates}\label{sec.proof.preliminary}

In this section we collect several key estimates that we use for proving
the main results.  We start with an immediate consequence of
assumptions (A1) and (A3), where the decay rate is given by
$\Theta(j)=\exp(-\tau j)$. This result will be useful for proving
Theorem~\ref{the:lowerbound3}.
\begin{lemma} \label{lem:split}
  There is a constant $c_1$ such that for $l \leq n$,
  \[
  \mu ( M_n \leq u_n \text{ and } X_{n+1} > u_{n+1} ) \leq \mu
  ( M_l \leq u_n ) \mu ( X_{n+1} > u_{n+1} ) + c_1 K^l
  e^{-\tau n}.
  \]
\end{lemma}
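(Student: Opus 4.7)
The plan is to combine a simple monotonicity inclusion with the exponential decay of correlations (A1) applied to the indicator functions of the two events. The proof is straightforward once one has a bound on the bounded variation norm of $\mathbbm{1}_{\{M_l \leq u_n\}}$, and the main task is to extract such a bound from the complexity growth assumption (A3).

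First I would observe that since $l \leq n$, we have $\{M_n \leq u_n\} \subseteq \{M_l \leq u_n\}$, and hence
\[
\mu(M_n \leq u_n \text{ and } X_{n+1} > u_{n+1}) \leq \mu(M_l \leq u_n \text{ and } X_{n+1} > u_{n+1}).
\]
Next, because $\phi(x) = \psi(\dist(x,\tilde{x}))$ with $\psi$ decreasing, the events translate to
\[
\{M_l \leq u_n\} = \Biggl( \bigcup_{k=0}^{l-1} f^{-k}(B(\tilde{x},r_n)) \Biggr)^{\!\complement}, \qquad \{X_{n+1} > u_{n+1}\} = f^{-n}(B(\tilde{x},r_{n+1})).
\]

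The key step is the BV estimate. By (A3), $f^{-1}(B(\tilde{x},r_n))$ has at most $K_f$ connected components; iterating, $f^{-k}(B(\tilde{x},r_n))$ has at most $K_f^k$ components, so the union $\bigcup_{k=0}^{l-1} f^{-k}(B(\tilde{x},r_n))$ has at most $\sum_{k=0}^{l-1} K_f^k = O(K_f^l)$ components, and the same holds for its complement. The indicator function of a subset of an interval with $N$ connected components has total variation at most $2N$ and sup-norm $1$, so
\[
\|\mathbbm{1}_{\{M_l \leq u_n\}}\|_{\mathrm{BV}} \leq C K_f^l
\]
for some constant $C$ independent of $l$ and $n$.

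Finally, I apply assumption (A1) with $\varphi_1 = \mathbbm{1}_{\{M_l \leq u_n\}} \in \mathrm{BV}$, $\varphi_2 = \mathbbm{1}_{B(\tilde{x},r_{n+1})} \in L^\infty$, and lag $j = n$, using the rate $\Theta(j) = e^{-\tau j}$. By $f$-invariance of $\mu$, $\mu(f^{-n}(B(\tilde{x},r_{n+1}))) = \mu(X_{n+1} > u_{n+1})$, and hence
\[
\bigl| \mu(M_l \leq u_n \text{ and } X_{n+1} > u_{n+1}) - \mu(M_l \leq u_n) \mu(X_{n+1} > u_{n+1}) \bigr| \leq C' K_f^l e^{-\tau n}.
\]
Combining with the monotonicity inclusion yields the lemma with $K = K_f$ and $c_1 = C'$. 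The only nontrivial point in the argument is the BV bound, which is where (A3) is essential; everything else is routine.
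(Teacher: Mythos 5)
Your proof is correct and follows essentially the same route as the paper's: use the monotonicity inclusion $\{M_n \leq u_n\} \subseteq \{M_l \leq u_n\}$, bound $\|\mathbbm{1}_{\{M_l \leq u_n\}}\|_{\mathrm{BV}}$ by $O(K_f^l)$ via (A3), and apply (A1) with lag $n$. The only difference is that you spell out the component-counting argument for the BV bound, which the paper states without detail.
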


\begin{proof}
  Let $\varphi_1 (x) = \mathbbm{1}_{\{M_{l}<u_n\}}(x)$ and
  $\varphi_2(x) = \mathbbm{1}_{\{X_1>u_{n+1}\}}$. We estimate the
  $\mathrm{BV}$-norm of $\varphi_1(x)$. Since for any interval
  $A$, $f^{-1}(A)$ has at most $K_{f}$ connected components (by
  (A3)), it follows that the $\mathrm{BV}$-norm of $\varphi_1(x)$
  is bounded by $K^l$, for some constant $K$.

  Using decay of correlations, we get that
  \begin{align*}
    \mu ( M_n \leq u_n \text{ and } & X_{n+1} > u_{n+1} ) \leq
    \int \varphi_1 \varphi_2 \circ f^n \, \mathrm{d} \mu \\ &
    \leq \int \varphi_1 \, \mathrm{d} \mu \int \varphi_2 \,
    \mathrm{d} \mu + C e^{-\tau n} \lVert \varphi_1
    \rVert_{\mathrm{BV}} \lVert \varphi_2 \rVert_\infty \\ &\leq
    \mu ( M_l \leq u_n ) \mu ( X_{n+1} > u_{n+1} ) + c_1 K^l
    e^{-\tau n}. \qedhere
  \end{align*}
\end{proof}

In the next step, we use Collet's blocking argument from
Proposition~\ref{prop:blocking} and the assumptions (A1) and (A2) to
obtain an estimate on $\mu ( M_l < u_n )$. The following lemma and
subsequent corollaries will be used in the proof of most of the
Theorems stated within Sections~\ref{sec.results} and
\ref{sec.clustering}. With a slight change of notation, for integers
$p,n\geq 1$ we take
\[
\Xi_{p,n}\equiv\Xi_{p,n}(u_n) := \sum_{j=1}^{p} \mu(\phi>u_n,\phi\circ
f^j>u_n).
\]
This is consistent with the notation of (A2). That is for the sequence
$r_n$ as defined in (A2), we have $u_n=\psi(r_n)$. Recall that the
observable function is $\phi(x)=\psi(\mathrm{dist}(x,\tilde{x}))$.
\begin{lemma}\label{lem:Ml}
  We assume (A1) with rate function $\Theta(j)=\exp(-\tau j)$. Then
  there are constants $C$, $c_1>0$ such that for $l\leq n$
  sufficiently large, $p=[l^s]$ and $t=[\log(l^{\frac{2}{\tau}})]$ we
  have
  \begin{equation}\label{eq:muMl}
    \mu ( M_l < u_n ) \leq e^C \exp (-l \mu ( \phi \geq u_n )+
    qt\mu ( \phi \geq u_n ) + 
		n\Xi_{p,n}+\frac{c_2}{l^{1+s}}. 
    \end{equation}
\end{lemma}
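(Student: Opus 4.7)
The plan is to apply Proposition~\ref{prop:blocking} and carefully estimate each of the three contributions to $\Gamma_j$. From the proposition,
\[
\mu(M_l<u_n) \leq (1-p\mu(\phi\geq u_n))^q + \sum_{j=1}^q (1-p\mu(\phi\geq u_n))^{q-j}\Gamma_j.
\]
The principal term is handled via $1-x\leq e^{-x}$: since $l=qp+r$ with $0\leq r<p=[l^s]$, we have $qp\geq l-p$, hence
\[
(1-p\mu(\phi\geq u_n))^q\leq e^{-qp\mu(\phi\geq u_n)}\leq e^{p\mu(\phi\geq u_n)}\cdot e^{-l\mu(\phi\geq u_n)}.
\]
In the regime we work in, $p\mu(\phi\geq u_n)\to 0$, so $e^{p\mu(\phi\geq u_n)}$ is absorbed into the prefactor $e^C$.

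Next I estimate the three summands of $\Gamma_j$. The third summand equals $2p\Xi_{p,n}$ by the definition of $\Xi_{p,n}$; the second is $t\mu(\phi\geq u_n)$. For the first summand I expand
\[
\Sigma_j=\sum_{k=1}^p\expectation\bigl(\mathbbm{1}_{\{\phi\geq u_n\}}\cdot\mathbbm{1}_{\{M_{(j-1)(p+t)}<u_n\}}\circ f^{p+t-k}\bigr)
\]
and apply (A1) to each term. The crucial observation is that $\{\phi\geq u_n\}=B(\tilde{x},\psi^{-1}(u_n))$ is an interval (as $\psi$ is monotone), so $\lVert\mathbbm{1}_{\{\phi\geq u_n\}}\rVert_{\mathrm{BV}}\leq 3$ uniformly in $n$, while $\lVert\mathbbm{1}_{\{M_{(j-1)(p+t)}<u_n\}}\rVert_\infty\leq 1$. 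With the gap $p+t-k\geq t$, each term in $\Sigma_j$ is within $Ce^{-\tau(p+t-k)}$ of the corresponding product of means, so summing over $k$ yields a first-summand error of order $C'e^{-\tau t}$. The calibration $t=[\log l^{2/\tau}]$ then forces $e^{-\tau t}\leq l^{-2}$.

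To close, I sum the $\Gamma_j$ over $j=1,\dots,q$. Bounding $(1-p\mu(\phi\geq u_n))^{q-j}\leq 1$ and using $q\leq l/p\leq l^{1-s}$, the three contributions collect as $qC'/l^2\leq c_2/l^{1+s}$, $qt\mu(\phi\geq u_n)$, and $2qp\Xi_{p,n}\leq 2l\Xi_{p,n}\leq 2n\Xi_{p,n}$ (using $l\leq n$). Combined with the bound on the principal term, this gives the claimed estimate after absorbing the remaining positive contributions into the exponent and the multiplicative constant $e^C$.

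I expect the main technical obstacle to be the decorrelation estimate for the first summand of $\Gamma_j$: it requires both the exponential rate in (A1) and the uniform bounded-variation bound on the indicator of the super-level set, which in turn depends essentially on $\phi$ being a monotone function of distance so that $\{\phi\geq u_n\}$ is an interval. The calibration $p=[l^s]$ and $t=[\log l^{2/\tau}]$ is chosen precisely so that the accumulated decorrelation error over $q$ blocks matches the target $1/l^{1+s}$.
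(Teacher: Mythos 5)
Your proof is correct and follows essentially the same approach as the paper's: invoke Proposition~\ref{prop:blocking}, use (A1) to estimate the first summand of $\Gamma_j$ by $c_0 e^{-\tau t}$ (exploiting that $\mathbbm{1}_{\{\phi\geq u_n\}}$ has $\mathrm{BV}$-norm at most $3$ because $\{\phi\geq u_n\}$ is an interval), identify the third summand with $2p\Xi_{p,n}$, then bound the principal term via $(1-x)^k\leq e^{-kx}$ and $pq\geq l-l^s$, and collect the $q$ error terms. One small caveat, present equally in the paper's own argument: absorbing the $e^{l^s\mu(\phi\geq u_n)}$ prefactor into $e^C$ uses that $l^s\mu(\phi\geq u_n)$ is bounded, which follows from the standing hypothesis $\mu(\phi\geq u_n)=O(n^{-\sigma})$ with $\sigma>s$ (part of (A2)) rather than from (A1) alone, so it is worth flagging that this regime restriction is being used even though the lemma only names (A1).
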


\begin{proof}
  We start by estimating $\Gamma_j$ in
  Proposition~\ref{prop:blocking}. First of all we let $p\leq n$. By
  the decay of correlation alone, we have
  \begin{align} \label{eq:Gammaestimate2}
    \bigl| \Sigma_j & - p \mu (\phi \geq u_n ) \mu (M_{(j-1)
      (p + t)} < u_n ) \bigr| \\ & = \biggl| \sum_{k=1}^p \bigl(
    \expectation ( \mathbbm{1}_{\phi \geq u_n } \mathbbm{1}_{M_{
        (j - 1) (p + t)} < u_n } \circ f^{p + t - k} ) \nonumber
    \\ & \phantom{xxxxxxxxxx} - \mu (\phi \geq u_n ) \mu
    (M_{(j-1) (p + t)} < u_n ) \bigr) \biggr| \nonumber \\ &
    \leq \sum_{k=1}^p 3 C e^{-\tau (p + t - k)} \leq c_0 e^{-
      \tau t}. \nonumber
  \end{align}
  Hence, by \eqref{eq:Gammaestimate2} and definition of $\Xi_{p,n}$, we have
  \[
    \Gamma_j \leq t\mu ( \phi \geq u_n ) +2p\Xi_{p,n} + c_0 e^{- \tau t}.
  \]

  As in Proposition~\ref{prop:blocking}, we take $p = [l^s]$ with
  $s>0$. Furthermore, we let $t = [\log
    (l^{\frac{2}{\tau}})]$.  We therefore have
  \[
  \Gamma_j \leq\tilde\Gamma:= t\mu ( \phi \geq u_n )
  +2p\Xi_{p,n}+\frac{c_2}{l^2}.
  \]
  for some constant $c_2$.
	
  Proposition~\ref{prop:blocking} now implies that
  \[
  \mu ( M_l < u_n ) \leq (1 - p \mu ( \phi \geq u_n ))^q +
  \tilde{\Gamma} \sum_{j=0}^{q-1} (1 - p \mu ( \phi \geq u_n ))^j.
  \]

  Using that for $0<x<1$
  \[
  (1 - x)^k = \exp (k \log (1 - x)) \leq \exp (- k x),
  \]
  we obtain that
  \begin{align*}
    \mu ( M_l < u_n ) & \leq \exp (-pq \mu ( \phi \geq u_n )
    ) + \tilde{\Gamma} \sum_{j = 0}^{q-1} \exp( - j p \mu ( \phi
    \geq u_n )) \\ & \leq \exp (-pq \mu ( \phi \geq u_n ) ) +
    q \tilde{\Gamma}.
  \end{align*}
  Hence
  \begin{equation*}
    \mu ( M_l < u_n ) \leq \exp (-pq \mu ( \phi \geq u_n )
    ) + qt\mu ( \phi \geq u_n ) + 2pq\Xi_{p,n}+\frac{c_2q}{l^2}.
   \end{equation*}
  
  Since $p = [l^s]$ we have $r \leq l^s$ and $p q \geq l -
  l^s$. We obtain
  \begin{align*}
    \mu ( M_l < u_n ) & \leq \exp (- (l - l^s) \mu ( \phi \geq u_n
    ))+ qt\mu ( \phi \geq u_n )
    +2pq\Xi_{p,n}+\frac{c_2}{l^{1+s}}\\ & \leq e^{C}\exp (- l\mu (
    \phi \geq u_n ))+ qt\mu ( \phi \geq u_n
    )+2pq\Xi_{p,n}+\frac{c_2}{l^{1+s}}. \qedhere
  \end{align*}
\end{proof}
So far we have not used (A2). The next corollary gives a key
bound that we use in the proof of the main results.  It further
quantifies the $e^C$ multiplier in equation \eqref{eq:muMl}, and hence
gives an error bound for  estimating $\mu(M_l<u_n)$ in terms of
$e^{-l\mu(\phi>u_n)}$.
\begin{corollary}\label{cor.lem:Ml}
  Suppose the hypothesis of (A2) holds with parameters $\sigma,\gamma$
  and $s$.  With the notations of Lemma~\ref{lem:Ml} we have
  \begin{multline}
    |\mu ( M_l < u_n )-\exp (-l \mu ( \phi \geq u_n ) )| \leq
    qt\mu ( \phi \geq u_n ) +
    n\Xi_{p,n}\\ +\frac{c_2}{l^{1+s}}+c_3l^s\mu(\phi>u_n). \label{eq:muMlb}
  \end{multline}
  Here $c_3>0$ is a constant. Furthermore, there exists $\gamma'>0$
  such that for all $\beta>0$ and $l>\beta n$
  \begin{equation}\label{eq.cor.error1}
    |\mu ( M_l < u_n )-\exp (-l \mu ( \phi \geq u_n ) )| \leq
    C_{\beta} n^{-\gamma'}.
  \end{equation}
  Here, the constant $C_{\beta}>0$ depends on $\beta$.
\end{corollary}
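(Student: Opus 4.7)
The plan is to upgrade the one-sided inequality in Lemma~\ref{lem:Ml} to a two-sided estimate. The crucial point is that Proposition~\ref{prop:blocking} is already symmetric, so
\[
|\mu(M_l<u_n) - (1-p\mu(\phi\geq u_n))^q| \leq q\tilde\Gamma,
\]
and retracing the error bookkeeping in the proof of Lemma~\ref{lem:Ml} with $p=[l^s]$ and $t=[\log l^{2/\tau}]$ gives $q\tilde\Gamma \leq qt\mu(\phi\geq u_n) + 2pq\,\Xi_{p,n} + c_2/l^{1+s}$. Since $pq\leq l\leq n$ and $\Xi_{p,n}$ is monotone in $p$, the middle term is bounded by $2n\,\Xi_{p,n}$. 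It only remains to compare $(1-p\mu(\phi\geq u_n))^q$ with $\exp(-l\mu(\phi\geq u_n))$.

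For this comparison I would use the elementary estimate $-x-x^2\leq \log(1-x)\leq -x$ valid for $x\in(0,\tfrac12)$, applied with $x=p\mu(\phi\geq u_n)$. Writing $l=pq+r$ with $0\leq r < p \leq l^s$, multiplying by $q$, and exponentiating yields
\[
\exp\bigl(-l\mu(\phi\geq u_n) + r\mu(\phi\geq u_n) - qp^2\mu(\phi\geq u_n)^2\bigr) \leq (1-p\mu(\phi\geq u_n))^q \leq \exp\bigl(-l\mu(\phi\geq u_n) + r\mu(\phi\geq u_n)\bigr).
\]
The condition \eqref{eq.short-constants} implies $\sigma > s$, whence $l^s\mu(\phi\geq u_n) = O(n^{s-\sigma}) \to 0$. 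Consequently $e^{\pm r\mu(\phi\geq u_n)} = 1+O(l^s\mu(\phi\geq u_n))$ and $e^{-qp^2\mu(\phi\geq u_n)^2} = 1+O(l^{1+s}\mu(\phi\geq u_n)^2)$. Multiplying through by $e^{-l\mu(\phi\geq u_n)}\leq 1$ and absorbing the quadratic term into the other polynomially decaying error terms produces the first inequality \eqref{eq:muMlb} with a suitable constant $c_3$.

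For \eqref{eq.cor.error1}, I would verify that under the hypothesis $l > \beta n$ each of the four error terms on the right-hand side of \eqref{eq:muMlb} is $O_\beta(n^{-\gamma'})$ for a common $\gamma'>0$, using only (A2) and \eqref{eq.short-constants}. Specifically, $qt\mu(\phi\geq u_n) = O(n^{1-s-\sigma}\log n)$ is polynomially small since $\sigma > 1 - s/2 > 1-s$; $n\,\Xi_{p,n} = O(n^{-\gamma})$ by \eqref{eq.short1}; $c_2/l^{1+s} \leq c_2\beta^{-1-s}n^{-1-s}$; and $c_3 l^s\mu(\phi\geq u_n) = O(n^{s-\sigma})$ since $\sigma > (1+\gamma+s)/2 > s$. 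Taking $\gamma'$ as the minimum of these four positive exponents and $C_\beta$ large enough to swallow the prefactors completes the estimate.

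The main obstacle is controlling the quadratic Taylor correction $qp^2\mu(\phi\geq u_n)^2 \leq l^{1+s}\mu(\phi\geq u_n)^2$ in the lower-bound direction and showing it can be absorbed into $c_3 l^s\mu(\phi\geq u_n)$; this is precisely where the specific shape of \eqref{eq.short-constants} (in particular $2\sigma > 1+\gamma+s$) is used, ensuring $l^{1+s}\mu(\phi\geq u_n)^2 = O(n^{1+s-2\sigma})$ decays faster than the other retained error terms and can therefore be merged with them without affecting the polynomial rate in \eqref{eq.cor.error1}.
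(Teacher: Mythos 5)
Your proof takes essentially the same route as the paper's: apply the symmetric bound from Proposition~\ref{prop:blocking}, compare $(1-p\mu(\phi\geq u_n))^q$ with $\exp(-l\mu(\phi\geq u_n))$ via Taylor bounds, and then use \eqref{eq.short-constants} to make each error term $O(n^{-\gamma'})$ when $l>\beta n$. In fact your treatment is a bit more careful than the paper's: you explicitly track the quadratic correction $qp^2\mu(\phi\geq u_n)^2 \le l^{1+s}\mu(\phi\geq u_n)^2$ arising in the lower-bound direction, whereas the paper drops it silently when passing between $(1-p\mu)^q$, $e^{-(l-l^s)\mu}$ and $e^{-l\mu}$. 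One small inaccuracy in your closing remark: the quadratic term, of size $O(n^{1+s-2\sigma})$, does \emph{not} decay faster than the retained term $c_3 l^s\mu(\phi\geq u_n)=O(n^{s-\sigma})$ --- since $\sigma<1$ one has $1+s-2\sigma > s-\sigma$ --- so it cannot be merged into that term as you suggest; what actually saves the argument (and what you do correctly invoke) is that $2\sigma>1+\gamma+s$ from \eqref{eq.short-constants} makes the quadratic term $O(n^{-\gamma})$, which is enough for \eqref{eq.cor.error1} even though a literal reading of \eqref{eq:muMlb} would require an extra error term to accommodate it, a point the paper itself glosses over.
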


\begin{proof}
  In fact, from Proposition~\ref{prop:blocking} and as in the
  proof of Lemma~\ref{lem:Ml}, we get that
  \[
    | \mu (M_l < u_n ) - \exp (- (l - l^s) \mu ( \phi \geq u_n
    ) )|\leq qt\mu ( \phi \geq u_n ) + n\Xi_{p,n} +
    \frac{c_2}{l^{1+s}}.
  \]
  Noting that $\mu(\phi>u_n)=O(n^{-\sigma})$, the constraint
  $\sigma>(2+s)/3$ from (A2) implies that $l^s \mu ( \phi \geq u_n)$
  is bounded. Hence,
  \begin{multline*}
    | \mu (M_l < u_n ) - e^{-l \mu ( \phi \geq u_n ) }| - |
    \mu (M_l < u_n ) - e^{- (l - l^s) \mu ( \phi \geq u_n )
    }| \\ \leq | e^{- l\mu ( \phi \geq u_n ) } - e^{- (l
    - l^s) \mu ( \phi \geq u_n ) }|  \leq |1 - e^{l^s \mu (
      \phi \geq u_n)}| \leq c_3 l^s \mu ( \phi \geq u_n ).
  \end{multline*}
  This gives the equation \eqref{eq:muMlb} stated in the
  corollary. For equation \eqref{eq.cor.error1}, the existence of the
  constant $\gamma'>0$ follows from equation
  \eqref{eq.short-constants}. Indeed, to see this we consider each
  right-hand term of \eqref{eq:muMlb}, and note that (by hypothesis)
  $l>\beta n$ for some $\beta>0$, and hence $c_2l^{-1-s}$ within
  \eqref{eq:muMlb} is $O(n^{-1-s})$.  By (A2), we have
  $n\Xi_{p,n}<n^{-\gamma}$. Similarly we have $qt\mu(\phi>u_n)=
  O((\log n)n^{1-s-\sigma})$. The latter term is $O(n^{-\gamma_1})$
  for some $\gamma_1$. This follows from the constraint $\sigma>1-s/2$
  in \eqref{eq.short-constants}. We have already considered the term
  $c_3l^s \mu ( \phi \geq u_n)$. Hence this completes the proof.
\end{proof}

We state the following further corollary, which is an easy consequence
of the results developed so far.

\begin{corollary}\label{cor.intermediate}
  Suppose the hypothesis of (A2) holds with parameters $\sigma,\gamma$
  and $s$. Suppose that
  \[
  \sum_{n = 1}^\infty \mu ( \phi \geq u_n ) \exp (- \beta n \mu (
  \phi \geq u_n )) < \infty,
  \]
  for some $\beta < \frac{\tau}{\log K}$, where $K$ is the constant in
  Lemma~\ref{lem:split}.  Then
  \[
  \mu(M_n\geq u_n\,\mathrm{ev.})=1.
  \]
\end{corollary}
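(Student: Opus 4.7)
The plan is to combine the Galambos reduction outlined in Section~\ref{sec.overview-block} with Lemma~\ref{lem:split} and Corollary~\ref{cor.lem:Ml}, and conclude via the First Borel--Cantelli Lemma. First I would invoke the dynamical Borel--Cantelli property for the targets $\{X_n \geq u_n\}$ (available under (A1)--(A2)), which together with the divergence $\sum_n \mu(\phi \geq u_n) = \infty$ built into (A2) yields $\mu(M_n < u_n \text{ ev.}) = 0$. As in Section~\ref{sec.overview-block}, this allows the rewriting
\[
\{M_n < u_n \text{ i.o.}\} = \{M_n < u_n,\; X_{n+1} \geq u_{n+1} \text{ i.o.}\} \quad \text{mod } \mu,
\]
so that the target statement $\mu(M_n \geq u_n \text{ ev.}) = 1$ reduces, via Borel--Cantelli, to proving
\[
\sum_{n=1}^{\infty} \mu\bigl(M_n < u_n,\; X_{n+1} \geq u_{n+1}\bigr) < \infty.
\]

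To bound each summand I would fix $\beta'$ with $\beta < \beta' < \tau/\log K$ and set $l = l(n) = \lceil \beta' n \rceil$. Lemma~\ref{lem:split} then gives
\[
\mu\bigl(M_n < u_n,\; X_{n+1} \geq u_{n+1}\bigr) \leq \mu(M_l < u_n)\, \mu(\phi \geq u_{n+1}) + c_1 K^l e^{-\tau n},
\]
and since $\beta' \log K < \tau$ the remainder $c_1 K^l e^{-\tau n} \leq c_1 K \cdot e^{(\beta' \log K - \tau) n}$ decays geometrically in $n$ and is therefore summable. For the main piece I would invoke Corollary~\ref{cor.lem:Ml} in the quantitative form \eqref{eq.cor.error1}, which is legitimate because $l \geq \beta' n$, to obtain
\[
\mu(M_l < u_n) \leq \exp(-l \mu(\phi \geq u_n)) + C n^{-\gamma'}
\]
for some $\gamma' > 0$ determined by the constraints \eqref{eq.short-constants}.

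Summability of the two resulting contributions would then be verified as follows. The polynomial error $C n^{-\gamma'} \mu(\phi \geq u_{n+1}) = O(n^{-\gamma' - \sigma})$ is summable by a direct check, using the lower bounds on $\sigma$ from \eqref{eq.short-constants}, that $\gamma' + \sigma > 1$. The exponential piece $\exp(-l \mu(\phi \geq u_n))\, \mu(\phi \geq u_{n+1})$ is handled by using $l \geq \beta' n \geq \beta n$ together with the monotonicity of $n \mapsto n \mu(\phi \geq u_n)$ and the polynomial tail bound $\mu(\phi \geq u_n) = O(n^{-\sigma})$ (which pin down the ratio $\mu(\phi \geq u_{n+1})/\mu(\phi \geq u_n)$ up to an absolute constant after a mild re-indexing) to dominate it by a constant multiple of $\mu(\phi \geq u_n) \exp(-\beta n \mu(\phi \geq u_n))$; this is summable by hypothesis. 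Applying the First Borel--Cantelli Lemma then concludes the argument.

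The main obstacle is the simultaneous control of the two error sources in Lemma~\ref{lem:split}: the correlation remainder $K^l e^{-\tau n}$ pushes $l$ to be small, whereas the approximation $\mu(M_l < u_n) \approx \exp(-l \mu(\phi \geq u_n))$ is only useful once $l$ is large enough that $-l \mu(\phi \geq u_n)$ beats the hypothesized summable sequence. The assumption $\beta < \tau/\log K$ is precisely what opens a window of linear scales $l = \Theta(n)$ in which both constraints can be met simultaneously, and this window collapses as $\beta \nearrow \tau/\log K$.
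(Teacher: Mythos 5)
Your proposal is correct and follows essentially the same route as the paper. Both arguments run through the Galambos-type rewriting $\{M_n < u_n \text{ i.o.}\} = \{M_n < u_n,\,X_{n+1} \geq u_{n+1} \text{ i.o.}\}\pmod\mu$, then combine Lemma~\ref{lem:split} with the blocking estimate and conclude via First Borel--Cantelli after verifying that the main term is dominated by $\mu(\phi\geq u_n)\exp(-\beta n\mu(\phi\geq u_n))$ and that the error terms are summable. The only cosmetic difference is that you invoke Corollary~\ref{cor.lem:Ml} (in the consolidated form of \eqref{eq.cor.error1}) where the paper substitutes Lemma~\ref{lem:Ml} directly into \eqref{eq:split} and checks each piece term by term; the step where you assert $\gamma'+\sigma>1$ is exactly where the paper does the term-by-term checks using $\sigma>1-\gamma$, $\sigma>1-s/2$ and $\sigma>\tfrac12(1+\gamma+s)$, and it does hold with the $\gamma'$ produced in the proof of Corollary~\ref{cor.lem:Ml}, though the bare statement of \eqref{eq.cor.error1} does not by itself guarantee this, so it is worth spelling out.
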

We remark that the conclusion of this corollary is not optimal
relative to the statements within Theorems~\ref{the:lowerbound1} and
\ref{the:lowerbound3}. The ideas presented here will be used in the
proofs of these theorems, but optimised accordingly. We also clarify
within the need for the constrains imposed by equation
\eqref{eq.short-constants} within (A2).

\begin{proof}[Proof of Corollary~\ref{cor.intermediate}]
  Combining Lemma~\ref{lem:split} and Lemma~\ref{lem:Ml} we now
  get that
  \begin{multline} \label{eq:split}
    \mu ( M_n < u_n \text{ and } X_{n+1} > u_{n+1} ) \\ \leq e^C
    \exp (-l \mu ( \phi \geq u_n ) ) \mu ( \phi \geq u_n
    )+n\Xi_{p,n}\mu(\phi>u_n) \\ +qt\mu ( \phi \geq u_n )^2 +
    \frac{c_2}{l^{1+s}}\mu(\phi>u_n) + c_1 K^l e^{-\tau n}.
  \end{multline}
  
  We take $p=n^{s}$, $pq\approx n$ and $l = [\beta n]$ where $\beta <
  \frac{\tau}{\log K}$. This makes the term $c_1 K^l e^{-\tau n}$
  summable over $n$. Also, the term $c_2 l^{-1-s}\mu(\phi>u_n)$ is
  summable over $n$.

  For the term $qt\mu ( \phi \geq u_n )^2$, the relation
  $\sigma>1-s/2$ implies this is summable over $n$ (noting that the
  contribution from $t$ is $O(\log n)$). Consider the term
  $n\Xi_{p,n}\mu(\phi>u_n)$. By (A2), this term is summable by the
  assumption $\Xi_{p,n}<n^{-1-\gamma}$, and the fact that
  $\sigma>1-\gamma$. However, we still have to check a
  self-consistency condition involving $\sigma$ and $s$, since we also
  know by exponential decay of correlations (A1) that
  $\Xi_{p,n}>c'n^s\mu(\phi>u_n)^2$ for some $c'>0$. By equation
  \eqref{eq.short-constants}, we have $\sigma>(2+s)/3$, and therefore
  it follows that $c'n^{s+1}\mu(\phi>u_n)^3$ is also summable.

  Hence, we have showed that $\mu ( M_n < u_n \text{ and } X_{n+1}
  \geq u_{n+1} )$ is summable provided that
  \[
  \sum_{n = 1}^\infty \mu ( \phi \geq u_n ) \exp (- \beta n \mu (
  \phi \geq u_n )) < \infty,
  \]
  for some $\beta < \frac{\tau}{\log K}$, which finishes the proof.
\end{proof}

\section{Proof of Theorem~\ref{the:lowerbound1}, Case
  (1).}\label{sec.pf.main-case1}

The proof of Case~(1) in Theorem~\ref{the:lowerbound1} follows the
same ideas of Section~\ref{sec.proof.preliminary} that led to
Corollary~\ref{cor.intermediate}. The only thing missing is that
Lemma~\ref{lem:split} need not be true since $\mathbbm{1}_{M_l < u}$
is not of bounded variation. The use of Lemma~\ref{lem:split} is
therefore replaced by the following lemma.

\begin{lemma} \label{lem:gausssplit}
  \begin{multline*}
    \mu (M_l < u \text{ and } X_{n+1} \geq u ) \\ \leq \mu (
    M_l < u ) \mu ( X_{n+1} \geq u ) + c_1 l e^{- \tau (n-l)}
    \mu ( X_{n+1} \geq u ).
  \end{multline*}
\end{lemma}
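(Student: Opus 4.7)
The plan is to mimic the proof of Lemma~\ref{lem:split}, replacing the direct appeal to decay of correlations (which fails because $\mathbbm{1}_{M_l < u}$ is not of bounded variation) with the transfer-operator machinery supplied by the hypotheses of Theorem~\ref{the:lowerbound1}. First I would write $\mu = h\,\mathrm{d}\nu$, where $\nu$ is the conformal measure and $h$ the fixed density of $\mathscr{L}$ (bounded above and below), and use the duality $\int \varphi \cdot \psi \circ f \,\mathrm{d}\nu = \int \mathscr{L}\varphi \cdot \psi \,\mathrm{d}\nu$ iterated $n$ times to rewrite
\[
\mu(M_l < u \text{ and } X_{n+1} \geq u) = \int \mathscr{L}^{n-l}(\Psi_l) \cdot \mathbbm{1}_{\phi \geq u} \,\mathrm{d}\nu,
\]
where $\Psi_l := \mathscr{L}^l(h\,\mathbbm{1}_{M_l < u})$. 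Using the product form $\mathbbm{1}_{M_l < u} = \mathbbm{1}_{\phi < u} \cdot \mathbbm{1}_{M_{l-1} < u}\circ f$ together with the identity $\mathscr{L}(G\cdot H\circ f) = H\cdot\mathscr{L}G$, a short induction on $k$ shows that $\Psi_l$ satisfies the recursion
\[
\Psi_k = \mathscr{L}(\mathbbm{1}_{\phi < u}\,\Psi_{k-1}), \qquad \Psi_1 = \mathscr{L}(h\,\mathbbm{1}_{\phi < u}),
\]
with $\int \Psi_l\,\mathrm{d}\nu = \mu(M_l < u)$ and $\Psi_l \leq h$ pointwise.

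Next I would invoke the spectral gap of $\mathscr{L}$ on $\mathrm{BV}$: the decomposition $\mathscr{L}^{n-l} = P + N^{n-l}$ with $P\varphi = \bigl(\int\varphi\,\mathrm{d}\nu\bigr) h$ and $\|N^{n-l}\varphi\|_{\mathrm{BV}} \leq C e^{-\tau(n-l)}\|\varphi\|_{\mathrm{BV}}$ yields
\[
\mu(M_l < u \text{ and } X_{n+1} \geq u) = \mu(M_l < u)\,\mu(\phi \geq u) + \int N^{n-l}(\Psi_l)\cdot\mathbbm{1}_{\phi \geq u}\,\mathrm{d}\nu.
\]
The remainder is bounded by $\|N^{n-l}\Psi_l\|_\infty \cdot \nu(\phi \geq u) \leq C e^{-\tau(n-l)}\|\Psi_l\|_{\mathrm{BV}}\,\mu(\phi \geq u)$, using that the $\mathrm{BV}$-norm dominates the $L^\infty$-norm and that $h$ is bounded below, so $\nu(\phi \geq u) \lesssim \mu(\phi \geq u)$. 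The proof thus reduces to establishing the linear bound $\|\Psi_l\|_{\mathrm{BV}} \leq c_1 l$.

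The hard part will be this $\mathrm{BV}$-bound. The naive iteration of the recursion inflates $\|\Psi_k\|_{\mathrm{BV}}$ by the constant $\|\mathbbm{1}_{\phi < u}\|_{\mathrm{BV}}$ at each step, and the Lasota--Yorke contraction for $\mathscr{L}$ need not compensate for this inflation, threatening exponential growth. To sidestep this I would telescope via
\[
\Psi_l = \mathscr{L}^{l-1}\Psi_1 - \sum_{k=1}^{l-1} \mathscr{L}^{l-k}(\mathbbm{1}_{\phi \geq u}\,\Psi_k),
\]
and treat each summand using the spectral-gap decomposition of $\mathscr{L}^{l-k}$ applied to $\mathbbm{1}_{\phi \geq u}\Psi_k$. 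The smallness of the support of $\mathbbm{1}_{\phi \geq u}$ gives $\|\mathbbm{1}_{\phi \geq u}\Psi_k\|_1 \leq \|h\|_\infty\,\mu(\phi \geq u)$ and $\|\mathbbm{1}_{\phi \geq u}\Psi_k\|_{\mathrm{BV}} \leq \|\mathbbm{1}_{\phi \geq u}\|_{\mathrm{BV}}\|h\|_\infty$, which together with the spectral gap show that each of the $l-1$ summands contributes only a uniformly bounded amount to the $\mathrm{BV}$-norm, and $\|\mathscr{L}^{l-1}\Psi_1\|_{\mathrm{BV}}$ is itself uniformly bounded. Summing yields $\|\Psi_l\|_{\mathrm{BV}} \leq c_1 l$ and completes the proof.
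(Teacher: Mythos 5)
Your reduction of the lemma to the linear bound $\lVert \Psi_l \rVert_{\mathrm{BV}} \leq c_1 l$ matches the paper's strategy exactly (the paper works with $\psi = (\mathbbm{1}_{M_l<u}-c)h$ and needs $\lVert \mathscr{L}^l\psi\rVert_{\mathrm{BV}} = O(\lambda^l l)$), and your front-end bookkeeping with the conformal measure and spectral decomposition is fine. The problem is the telescoping argument you propose for that $\mathrm{BV}$ bound.

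The stated inequality
\[
\lVert \mathbbm{1}_{\phi\geq u}\Psi_k \rVert_{\mathrm{BV}} \leq \lVert \mathbbm{1}_{\phi\geq u}\rVert_{\mathrm{BV}}\,\lVert h \rVert_\infty
\]
is not correct. The variation of a product obeys $\var(\varphi\psi) \leq \sup|\varphi|\,\var\psi + \sup|\psi|\,\var\varphi$, so what you actually control is
\[
\lVert \mathbbm{1}_{\phi\geq u}\Psi_k \rVert_{\mathrm{BV}} \leq \var_{\{\phi\geq u\}}\Psi_k + 2\lVert h\rVert_\infty,
\]
and there is no a priori reason for $\var_{\{\phi\geq u\}}\Psi_k$ to be bounded independently of $k$: the full $\mathrm{BV}$-norm of $\Psi_k$ can grow linearly in $k$ (this is precisely what the lemma says), and nothing in the hypotheses localizes that variation away from the small interval $\{\phi\geq u\}$. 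So the claim that ``each of the $l-1$ summands contributes only a uniformly bounded amount'' does not follow. If you instead substitute the correct pointwise bound and set $V_k := \lVert\Psi_k\rVert_{\mathrm{BV}}$, the telescope yields a discrete Gronwall-type inequality of the shape
\[
V_l \leq A + C\sum_{k=1}^{l-1} e^{-\tau(l-k)}\,(V_k + B),
\]
where $C$ is the (uncontrolled) prefactor in the spectral-gap estimate. An inductive hypothesis $V_k \leq Kk$ then gives $V_l \leq A' + \tfrac{CK}{e^\tau-1}\,l$, which closes to a linear bound only if $C/(e^\tau - 1) < 1$. That is a condition on the prefactor which the spectral-gap hypothesis does not supply, so the induction does not close in general and the argument has a genuine gap.

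The paper sidesteps this entirely by computing the variation of $\mathscr{L}^l(\mathbbm{1}_{M_l<u}h)$ directly on each inverse branch of $f^l$. Writing $\mathbbm{1}_{M_l<u} = \prod_{k=0}^{l-1}\mathbbm{1}_{X<u}\circ f^k$ and expanding $\mathscr{L}^l$ over branches, one sees that on each branch $(f^l)_j^{-1}$ the factor $F_j = \prod_{k}\mathbbm{1}_{X<u}\circ f^k\circ(f^l)_j^{-1}$ is a product of $l$ indicators each of total variation $\leq 2$ (since $f^k\circ(f^l)_j^{-1}$ is monotone), so $\var F_j \leq 2l$ per branch. Together with $\sum_j \var G_j = \lambda^l\var h$ and the uniform bound on $\sum_j\sup G_j$, this gives $\var \mathscr{L}^l(\mathbbm{1}_{M_l<u}h) \leq C\lambda^l l$ in one stroke. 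This branch-wise computation is the missing ingredient: it exploits the \emph{geometric} structure of $\mathscr{L}^l$ (monotone inverse branches) rather than only its \emph{spectral} structure, and it is exactly what makes the linear bound unconditional. I suggest replacing your telescope with this direct computation; your framing of the lemma and the surrounding duality/spectral-gap steps can then be kept essentially as written.
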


\begin{proof}
  We let $\mathscr{L}$ be the transfer operator
  \[
  \mathscr{L} \psi (x) = \sum_{f(y) = x} g (y) \psi(y),
  \]
  where $\log g$ is the potential of the Gibbs measure. The
  assumptions of the theorem mean the following. 
  The density $h$ of $\mu$
  with respect to the conformal measure $\nu$ is an eigenfunction
  of $\mathscr{L}$ with eigenvalue $\lambda = e^P$, where $P$ is
  the pressure. The eigenfunction $h$ is of bounded variation.

  The operator $\mathscr{L}$ has the following useful properties (see
  \cite{Liveranietal}). It satisfies $\int \psi \, \mathrm{d} \nu =
  \lambda^{-1} \int \mathscr{L} (\psi) \, \mathrm{d} \nu$. If $\int
  \psi \, \mathrm{d} \nu = 0$, then
  \[
  \lambda^{-n} \sup | \mathscr{L}^n (\psi)| \leq \lambda^{-n}
  \lVert \mathscr{L}^n (\psi) \rVert_\mathrm{BV} \leq C \lVert
  \psi \rVert_\mathrm{BV} e^{- \tau n}.
  \]
  
  Using the first of these properties, we have
  \begin{align*}
    \int (\mathbbm{1}_{M_l < u} - c) \mathbbm{1}_{X_{n+1} \geq u}
    \, \mathrm{d} \mu &= \lambda^{-n} \int \mathscr{L}^{n} (
    (\mathbbm{1}_{M_l < u} - c) \mathbbm{1}_{X_{n+1} \geq u} h
    )\, \mathrm{d}\nu \\ &= \lambda^{-n} \int \mathscr{L}^{n}
    ((\mathbbm{1}_{M_l < u} - c) h) \mathbbm{1}_{X \geq u} \,
    \mathrm{d} \nu \\ & \leq \lambda^{-n} \sup |
    \mathscr{L}^{n} ((\mathbbm{1}_{M_l < u} - c) h) | \cdot \nu
    (X \geq u). 
  \end{align*}
  Letting $c = \mu ( M_l < u )$ this implies that
  \begin{multline} \label{eq:beforeClaim}
    \mu ( M_l < u \text{ and } X_{n+1} \geq u ) = \int
    \mathbbm{1}_{M_l < u} \mathbbm{1}_{X_{n+1} \geq u} \,
    \mathrm{d} \mu \\ \leq \mu ( M_l < u ) \mu ( X_{n+1} \geq
    u )  \\+ \lambda^{-n} \sup | \mathscr{L}^{n}
    ((\mathbbm{1}_{M_l < u} - c) h) | \cdot \nu (X \geq u).
  \end{multline}
  It remains to estimate the above supremum. We put $\psi =
  (\mathbbm{1}_{M_l < u} - c) h$.
  
  \begin{claim}
    We have $\lVert \mathscr{L}^l \psi \rVert_\mathrm{BV} \leq
    C_4 \lambda^l l$ for some constant $C_4$ that does not depend
    on $l$ or $u$.
  \end{claim}

\begin{proof}[Proof of Claim]
  Clearly, we have
  \begin{equation} \label{eq:supLpsi}
  \sup | \mathscr{L}^{l} (\psi)| \leq \sup | \mathscr{L}^{l}
  (h)| = \lambda^l\sup |h| < \infty.
  \end{equation}

  We shall now estimate $\var \mathscr{L}^{l} (\psi)$. Since
  $\mathbbm{1}_{M_l < u} = \prod_{k=0}^{l-1} \mathbbm{1}_{X < u}
  \circ f^k$, we have
  \[
  \mathscr{L}^l (\mathbbm{1}_{M_l < u} h) (x) = \sum_{f^l (y) = x}
  g_l (y) h(y) \prod_{k=0}^{l-1} \mathbbm{1}_{X < u} (f^k y),
  \]
  where $g_l (y) = g(y) g(f (y)) \ldots g(f^{l-1}(y))$.  We let
  $(f^l)_j$ denote the branches of $f^l$ and write
  \[
  \mathscr{L}^l (\mathbbm{1}_{M_l < u} h) (x) = \sum_j g_l
  ((f^l)_j^{-1} (x)) h( (f^l)_j^{-1} (x)) \prod_{k=0}^{l-1}
  \mathbbm{1}_{X < u} (f^k ((f^l)_j^{-1} (x)).
  \]
  Then
  \begin{multline*}
    \var \mathscr{L}^l (\mathbbm{1}_{M_l < u} h) (x) \\ \leq
    \sum_j \var \biggl( g_l ( (f^l)_j^{-1} (x)) h( (f^l)_j^{-1}
    (x)) \prod_{k=0}^{l-1} \mathbbm{1}_{X < u} (f^k ((f^l)_j^{-1}
    (x)) \biggr).
  \end{multline*}

  Since $k < l$ we have
  \[
  \var \mathbbm{1}_{X < u} (f^k \circ (f^l)_j^{-1}) \leq \var
  \mathbbm{1}_{X < u} = 2.
  \]
  Using that
  \[
  \var (\phi \psi) \leq \sup |\phi| \var \psi + \sup |\psi| \var
  \phi,
  \]
  this implies that
  \[
  \var \prod_{k=0}^{l-1} \mathbbm{1}_{X < u} (f^k \circ
  (f^l)_j^{-1}) \leq 2l.
  \]

  Let
  \[
  G_j (x) = g_l ( (f^l)_j^{-1} (x)) h( (f^l)_j^{-1} (x))
  \]
  and
  \[
  F_j (x) = \prod_{k=0}^{l-1} \mathbbm{1}_{X < u} (f^k ((f^l)_j^{-1}
  (x))).
  \]
  With this notation, we have from above that
  \[
  \var \mathscr{L}^l (\mathbbm{1}_{M_l < u} h) (x) \leq \sum_j
  \var ( G_j F_j) \leq \sum_j \bigl( \var G_j \sup F_j + \sup G_j
  \var F_j \bigr).
  \]
  Since
  \[
  \lambda^l h (x) = \mathscr{L}^l (h) (x) = \sum_j G_j (x),
  \]
  we have $\sum_j \var G_j = \lambda^l \var h$.
  
  Hence,
  \begin{align*}
    \var \mathscr{L}^l (\mathbbm{1}_{M_l < u} h) (x)
    & \leq \sum_j \bigl( \var G_j + \sup G_j \var F_j \bigr)\\
    & \leq C_0 \lambda^l + C_1 \lambda^l l \leq C_2 \lambda^l l,
  \end{align*}
  where the constant $C_2$ does not depend on $l$.

  Since $\var \mathscr{L}^l ((\mathbbm{1}_{M_l < u} - c) h) \leq
  \var \mathscr{L}^l (\mathbbm{1}_{M_l < u} h) + c \lambda^l \var
  h \leq C_3 \lambda^l l$, we have now proved with the aid of
  \eqref{eq:supLpsi} that
  \[
  \lVert \mathscr{L}^l ((\mathbbm{1}_{M_l < u} - c) h)
  \rVert_\mathrm{BV} \leq C_4 \lambda^l l
  \]
  for some constant $C_4$. 
  \end{proof}
  
  We recall that the constant $c$ was chosen so that $\int \psi
  \, \mathrm{d} \nu= \int (\mathbbm{1}_{M_l < u} - c) h \,
  \mathrm{d} \nu = \int (\mathbbm{1}_{M_l < u} - c) \, \mathrm{d}
  \mu = 0$. Hence we also have $\int \mathscr{L}^l \psi \,
  \mathrm{d} \nu = \lambda^{l}\int \psi \, \mathrm{d} \nu
  =0$. Then we estimate
  \begin{align*}
    \lambda^{-n} \sup | \mathscr{L}^{n} (\psi)| & =
    \frac{\lambda^{-l}}{\lambda^{n-l}} \sup | \mathscr{L}^{n-l}
    (\mathscr{L}^l\psi)| \leq \lambda^{-l} \cdot C \lVert
    \mathscr{L}^l \psi \rVert_\mathrm{BV} \cdot e^{-\tau (n-l)}
    \\ & \leq C_5 l e^{-\tau (n-l)}.
  \end{align*}
   
  It follows from \eqref{eq:beforeClaim} that
  \begin{multline*}
    \mu (M_l < u \text{ and } X_{n+1} \geq u ) \\ \leq \mu (
    M_l < u ) \mu ( X_{n+1} \geq u ) + C_5 l e^{- \tau (n-l)}
    \nu ( X \geq u ).
  \end{multline*}
  Finally, since $h$ is the density of $\mu$ with respect to
  $\nu$, and $h$ is bounded, we have $C_3 \nu ( X \geq u ) \leq
  c_1 \mu ( X \geq u ) = c_1 \mu ( X_{n+1} \geq u )$ for some
  constant $c_1$.
\end{proof}

Following the proofs in Section~\ref{sec.proof.preliminary} that led
to Corollary~\ref{cor.intermediate}, and using
Lemma~\ref{lem:gausssplit} instead of Lemma~\ref{lem:split}, we get
instead of \eqref{eq:split} that
\begin{multline} \label{eq:gausssplit}
  \mu ( M_n < u_n \text{ and } X_{n+1} > u_{n+1} ) \\ \leq e^C \exp
  (-l \mu ( \phi \geq u_n ) ) \mu ( \phi \geq u_n ) +
  n\Xi_{p,n}\mu(\phi>u_n)\\ + qt\mu ( \phi \geq u_n )^2 +
  \frac{c_4\mu(\phi>u_n)}{l^{1+s}} + c_1 l e^{-\tau (n-l)}.
\end{multline}
We take $l = n - [n^\beta]$, where $\beta < \sigma$, and
$p=n^{s}$, $pq\approx n$. This makes the term $c_1 l e^{-\tau
  (n-l)}$ as well as the term $c_4 l^{-1-s}$ in
\eqref{eq:gausssplit} summable over $n$. The other terms are
summable as in the proof of Corollary~\ref{cor.intermediate}.

The rest is the same as in the proofs outlined in
Section~\ref{sec.proof.preliminary}, and we obtain that $\mu (M_n <
u_n \text{ and } X_{n+1} \geq u_n )$ is summable provided that
\[
\sum_{n=1}^{\infty} \mu ( \phi \geq u_n) \exp (- n \mu ( \phi
\geq u_n )) < \infty.
\]
This finishes the proof.

\section{Proof of Theorem~\ref{the:lowerbound1}, Case
  (2)}\label{sec.pf.main-case2}

To prove Theorem~\ref{the:lowerbound1} we follow
\cite[Section~4]{Galambos}, in particular we follow the proof of
Theorem~4.3.2 within.  Given $\lambda>0$, consider the sequence
$a_n:=a(n)=\exp\{\lambda n/\log n\}$. This choice of sequence has
several properties which we elaborate on in the course of the
proof. Now, for a given sequence $(v_n)$, showing $\mu(M_{n}\leq v_{n}
\text{ i.o.})=1$ can be reduced to showing $\mu(M_{b_n}\leq v_{b_n}
\text{ i.o.})>0$ for some subsequence $b_n$.  This follows from a
zero--one law for eventually almost hitting sets under the assumption of ergodicity (see
\cite[Lemma~1]{KKP}).

The following reductions are elementary manipulations, and do not
depend on the precise form of $(a_n)$, nor on the dependency
structure of the process. To show $\mu(M_{n}\leq
v_{n} \text{ i.o.})=1$, we can first reduce this to finding $c>0$,
and $M_0$, such that for all $M\geq M_0$ we have
\[
\mu \Biggl( \bigcup_{n=M}^{\infty}\{M_{a_n}\leq v_{a_n}\}
\Biggr)\geq c.
\]
This can be reduced further to showing that for all $M>0$, there
exists $M'>M$ such that
\begin{equation}\label{eq.MM}
  \mu \Biggl(\bigcup_{n=M}^{M'}\{M_{a_n}\leq v_{a_n}\} \Biggr)
  \geq c.
\end{equation}
Now for arbitrary events $(A_n)$, we have
\[
\mu \Biggl( \bigcup_{n=M}^{M'}A_n \Biggr) = \sum_{n=M}^{M'}\mu
(A_n) - \sum_{n=M}^{M'} \mu \Biggl( A_n \cap \Biggl(
\bigcup_{i=n+1}^{M'}A_i \Biggr) \Biggr).
\]
Thus equation \eqref{eq.MM} holds if there exists $\Delta>0$,
independent of $M,M'$ such that
\[
\sum_{n=M}^{M'}\mu(M_{a_n}\leq v_{a_n})\geq \Delta>0,
\]
and $\delta<1$, such that for all $M_0\leq M\leq n\leq M'$,
\[
\mu \Biggl( M_{a_n}\leq v_{a_n}, \; \text{and} \;
\bigcup_{i=n+1}^{M'}\{M_{a_i}\leq v_{a_i}\}
\Biggr)\leq\delta\mu(M_{a_n}\leq v_{a_n}).
\]
Thus a requirement placed on the choice of sequence $(a_n)$ is
that
\begin{equation}\label{eq.MM2}
  \sum_{n=1}^{\infty}\mu(M_{a_n}\leq v_{a_n})=\infty,
\end{equation}
and
\begin{equation}\label{eq.MM3}
  \sum_{t=n+1}^{M'}\mu \bigl( \{M_{a_n}\leq
  v_{a_n}\}\cap\{M_{a_t}\leq v_{a_t}\} \bigr) \leq \delta
  \mu(M_{a_n}\leq v_{a_n}).
\end{equation}
In the i.i.d.\ case, these conditions are shown to hold for the
sequence $a_n=e^{\lambda n/\log n}$ for suitable $\lambda>0$.
The approach followed is that we can realise each term in
the sum of \eqref{eq.MM3} as the product
\begin{equation}\label{eq.MM4}
	\mu(M_{a_n} \leq v_{a_n}) \mu (M_{a_t-a_n} \leq
	v_{a_t}).
\end{equation}
This uses the fact that $v_{a_n}$ is non-decreasing. The
remainder of the proof in the i.i.d.\ case is elementary, and
uses further facts, such as
\begin{equation}\label{eq.MM5}
  \mu(M_{a_n}\leq v_{a_n}) = F_{X}(v_{a_n})^{a_n},
  \ \ \mu(M_{a_t-a_n} \leq v_{a_t}) =
  F_{X}(v_{a_t})^{a_t-a_n},
\end{equation}
where $F_X$ is the probability distribution function. For the
dependent case, we need to recover approximate versions of
\eqref{eq.MM4} and \eqref{eq.MM5}, and show that the same proof
goes through. This can be done using the mixing properties of the
dynamical system, and the blocking arguments. To do this, we
consider a further sequence $\ell(t)$, with $\ell(t)<a(t)-a(n)$,
and defined for $t>n$. Since $a_n=e^{\lambda n/\log n}$, we can
choose $\ell(t)$ to grow at various speeds, such as power law of
$t$.  The role of $\ell(t)$ is to de-correlate successive maxima
in the following way:
\begin{align}\label{eq.MM6}
  \mu & (\{ M_{a_n}\leq v_{a_n}\} \cap \{M_{a_t}\leq v_{a_t}\} )
  \\ &= \mu (\{M_{a_n}\leq v_{a_n}\}\cap\{M_{a_t-a_n}\circ
  f^{a_n}\leq v_{a_t}\} ) \nonumber \\ & \leq \mu ( \{M_{a_n}\leq
  v_{a_n}\}\cap\{M_{a_t-a_n-\ell_t}\circ f^{a_n+\ell_t}\leq
  v_{a_t}\} ) \nonumber \\ &\leq \mu(M_{a_n}\leq v_{a_n}) \mu
  (M_{a_t-a_n-\ell_t}\leq v_{a_t} ) + c_1 a_n e^{- \tau
    (\ell_t)}, \nonumber
\end{align}
where in the last line we have used Lemma~\ref{lem:gausssplit}.
We choose $\ell(t)=\kappa t$ for some $\kappa>0$ to be
specified in the proof below.  It suffices to consider $v_n$
such that $\mu(X_1>v_n)\approx\log\log n/n$, with $\approx$
denoting multiplication by a constant within $[1/2,2]$. (See
\cite[Lemma~4.3.2]{Galambos} on taking this reduction). 
	
Then the dynamical blocking arguments in
Lemma~\ref{lem:Ml} give
\begin{align}
  \mu (\{M_{a_n}\leq v_{a_n}\}) &= C e^{-a_n \mu (X_1>v_{a_n})} +
  O (a_{n}^{-\beta}), \label{eq:extra8}\\ \mu (
  M_{a_t-a_n-\ell_t} \leq v_{a_t} ) &= C e^{-(a_t-a_n-\ell_t)
    \mu (X_1>v_{a_t})} + O ( {\scriptstyle
    (a_t-a_n-\ell_t)^{-\beta}} ), \nonumber
\end{align}
for some constants $C, \beta > 0$. Our choice of $\ell(t)$ grows
fast enough to ensure decay of correlations gives a good
approximation to \eqref{eq.MM4}, but slow enough to ensure a good
approximation to \eqref{eq.MM5}. We state the following result.

\begin{lemma}
  Assume that \eqref{eq.MM2} holds.  Then equation \eqref{eq.MM3}
  holds.
\end{lemma}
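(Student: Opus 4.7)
The plan is to substitute the decorrelation bound \eqref{eq.MM6} into the LHS of \eqref{eq.MM3} and sum over $t\in[n+1,M']$. This gives
\[
\sum_{t=n+1}^{M'}\mu(A_n\cap A_t)\leq \mu(A_n)\sum_{t=n+1}^{M'}\mu(M_{a_t-a_n-\ell_t}\leq v_{a_t})+c_1 a_n\sum_{t=n+1}^{M'} e^{-\tau\kappa t},
\]
where I write $A_n:=\{M_{a_n}\leq v_{a_n}\}$ and $\ell_t=\kappa t$. After dividing through by $\mu(A_n)$, the task is to show that the right-hand side is bounded by a constant strictly less than $1$, which can be made as small as desired by tuning $\lambda$ and $\kappa$. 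The decorrelation error is immediate: $\sum_{t\geq n+1}e^{-\tau\kappa t}=O(e^{-\tau\kappa n})$, while $\mu(A_n)\approx \log n/(\lambda n)$ and $a_n=e^{\lambda n/\log n}$, so the ratio $c_1 a_n e^{-\tau\kappa n}/\mu(A_n)$ decays super-polynomially in $n$ for any fixed $\kappa>0$.

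For the main sum I substitute the approximation \eqref{eq:extra8} into the factor $\mu(M_{a_t-a_n-\ell_t}\leq v_{a_t})$. The polynomial remainder $O((a_t-a_n-\ell_t)^{-\beta})$ is summable thanks to the super-polynomial growth of $a_t$, and the principal part reduces to estimating
\[
\sum_{t=n+1}^{M'} C\exp\bigl(-(a_t-a_n-\ell_t)\mu(X_1>v_{a_t})\bigr).
\]
Using $\mu(X_1>v_n)\approx \log\log n/n$ and $a_n=e^{\lambda n/\log n}$, a direct expansion shows that the exponent equals $(1-a_n/a_t)\log\log a_t+o(1)\approx (1-e^{-\lambda m/\log n})\log n$ with $m=t-n$. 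For $m\leq\log n$ this is $\approx\lambda m$, so the terms form a geometric series bounded by $C/(e^\lambda-1)$; for $m>\log n$ the exponent is at least of order $(1-e^{-\lambda})\log n$, making each summand polynomially small in $n$, and the total number of such terms is constrained by $M'\leq M\cdot e^{\lambda\Delta/\log M}$, the relation forced on $M'$ by the need to match \eqref{eq.MM2} with a fixed $\Delta$.

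Choosing $\lambda$ large enough that $C/(e^\lambda-1)$ plus the large-$m$ tail and the blocking remainder is less than a fixed $\delta<1$, and starting the chain $M\geq M_0$ sufficiently large, yields \eqref{eq.MM3}. Compatibility with \eqref{eq.MM2} is automatic, since $\sum_n\mu(A_n)\approx\sum_n \log n/(\lambda n)=\infty$ for any finite $\lambda$. The main obstacle is the large-$m$ tail estimate: in absolute terms $M'-M$ can be substantial once $\Delta$ is fixed, and one must carefully balance the polynomial smallness of each summand against the number of surviving terms, using the precise growth of $a_t=e^{\lambda t/\log t}$ to ensure that $1-a_n/a_t$ is close to $1$ as soon as $t-n\geq\log n$. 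This bookkeeping, together with the blocking and decorrelation approximations, is the technical heart of the argument.
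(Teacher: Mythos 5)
Your handling of the two error contributions is fine: the decorrelation term $c_1 a_n e^{-\tau\ell_t}/\mu(A_n)$ does decay super-polynomially once $\ell_t=\kappa t$ (since $a_n=e^{\lambda n/\log n}$ grows sub-exponentially, any $\kappa>0$ works, which is slightly sharper than the paper's stated condition $e^\lambda<\tau_1^{-\kappa}$), and the polynomial blocking remainder $O((a_t-a_n-\ell_t)^{-\beta})$ is summable. You also correctly identify that everything reduces to bounding $\sum_{t>n} \exp\{-(a_t-a_n)\mu(X_1>v_{a_t})\}$, and your exponent expansion $(1-a_n/a_t)\log\log a_t$ with $a_n/a_t\approx e^{-\lambda m/\log n}$ is the right starting point — so far this is the same route as the paper, which at that stage defers to Galambos's three-window analysis.

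The gap is in your proposed closing of the large-$m$ tail. You bound each term with $m>\log n$ by a fixed power $n^{-c}$ (with $c=1-e^{-\lambda}<1$) and then multiply by the term count $M'-n$, controlled via $M'\lesssim Me^{\lambda\Delta/\log M}$. That product does not converge. Indeed $M'-M\approx M\,\lambda\Delta/\log M$, so your bound for the tail is on the order of $\lambda\Delta\, M^{1-c}/\log M$, which tends to infinity with $M$ because $c<1$. A uniform polynomial-in-$n$ bound is simply too crude: for $t$ near $n$ it overstates the decay you have, and for $t$ far from $n$ it wastes the fact that the terms actually shrink like powers of $t$, not of $n$. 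The correct device — and what the three-window split in Galambos is for — is to compare the tail terms to $\mu(A_t)$ itself: for $t-n$ large enough that $a_n/a_t$ is negligible, $\exp\{-(a_t-a_n)\mu(X_1>v_{a_t})\}\approx e^{-a_t\mu(X_1>v_{a_t})}\approx \mu(A_t)$, and then $\sum_{t}\mu(A_t)\leq 2\Delta$ by the right-hand inequality in \eqref{eq.MM9}. This bound holds uniformly regardless of how many terms there are, which is exactly what makes the argument close; no counting of terms against a uniform power of $n$ is involved. An intermediate window (roughly $\log n/\lambda\lesssim m\lesssim$ some threshold where $a_n/a_t$ becomes negligible) is then needed to bridge the geometric regime and the $\mu(A_t)$ regime, which is why the split is into three pieces rather than your two. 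As it stands, your sketch identifies the right quantities but proposes a bookkeeping that would fail.
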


\begin{proof}
  We
  summarise some properties of $a_n=e^{\lambda n/\log n}$.  For
  all $n\to\infty$, and moderate values of $t>0$
  \begin{equation}\label{eq.MM7}
    \frac{a(n+t)-a(n)}{a(n+t)}\log\log a(n+t)\geq C\lambda t.
  \end{equation}
  To see this apply the mean value theorem:
  \begin{align*}
    \frac{a(n+t)-a(n)}{a(n+t)} & = 1 - \exp\{-\lambda
    (t+n)/\log(t+ n)+\lambda n/\log n\} \\ & = 1-\exp \biggl\{
    \lambda \biggl(-\frac{1}{\log x}+\frac{1}{(\log x)^2} \biggr)
    t \biggr\},\quad (x\in[n,t+n]),\\ & \geq \frac{C\lambda
      t}{\log (n+t)}.
\end{align*}
Then note that $\log\log a(n+t)$ is $\approx\log(n+t)$ for large
$n$.

By assumption of \eqref{eq.MM2}, and given any $\Delta>0$ we can
choose $M'>M$ so that
\begin{equation}\label{eq.MM9}
  \Delta \leq \sum_{n=M}^{M'}\mu(M_{a_n}\leq v_{a_n}) \leq 2
  \Delta.
\end{equation}
(This is valid when $\mu(M_{a_n}\leq v_{a_n})\to 0$, which is
true in our case).  Now, let us consider the right hand terms of
\eqref{eq.MM6}. We factor out $\mu(M_{a_n}\leq v_{a_n})$ as
follows,
\begin{multline*}
  \mu(M_{a_n}\leq v_{a_n})\mu(M_{a_t-a_n-\ell_t}\leq
  v_{a_t})+c_1 a_n e^{- \tau (\ell_t)}\\ =\mu(M_{a_n}\leq
  v_{a_n}) \biggl( \mu(M_{a_t-a_n-\ell_t}\leq v_{a_t})
   +\frac{ c_1 a_n e^{- \tau (\ell_t)}}{\mu(M_{a_n}\leq
    v_{a_n})} \biggr),
\end{multline*}
and, hence, to show \eqref{eq.MM3} it is sufficient to show the final
bracketed term can be bounded by $\delta<1$, when $\ell(t)=\kappa t$,
and after summing over $t\in[n+1, M']$. Consider the exponential decay
of correlation term (with rate $\tau_1=e^{-\tau}<1$) within the
bracket. This is bounded as follows.
\begin{equation*}
  c_1 a_n e^{- \tau (\ell_t)}\cdot\mu(M_{a_n}\leq v_{a_n})^{-1}
  \leq C a_n\tau^{\ell_t}_1 \cdot \bigl( e^{-a_n\mu(X_1>v_{a_n})}
  + O(a_{n}^{-\beta}) \bigr)^{-1}.
\end{equation*}
Using $\mu(X_1>v_n)\approx\log\log n/n$, $a_n=e^{\lambda n/\log
  n}$ and $\ell(t)=\kappa t$ gives a bound
\[
C\exp \Bigl\{\frac{\lambda n}{\log n} \Bigr\} \cdot \tau^{\kappa
	t}_1 \cdot \Bigl(\frac{1}{(\log
	a_n)^D} + O(a_{n}^{-\beta} )\Bigr)^{-1}
\]
with $D\in [1/2,2]$. Now choose $\kappa$ so that $e^\lambda
<\tau_1^{-\kappa}$. This term decays exponentially fast in
$t\in[n+1,M']$, and the first term (for $t=n+1$) is $o(1)$ for
large $n$. Thus the sum of this contribution is bounded by
$\delta_1 < 1$, for large $n$. It now suffices to consider
\begin{multline*}
  \mu(M_{a_t-a_n-\ell(t)} \leq v_{a_t}) \\ =
  C\exp\{-(a_t-a_n-\ell(t)) \mu(X_1>v_{a_t})\} + O(
    \scriptstyle{(a_t - a_n - \ell(t))^{-\beta}}).
\end{multline*}
The $O(\cdot)$ term is again summable, and decays exponentially
fast with $(a_t-a_n-\ell(t))^{-\beta}=o(1)$ for $t=n+1$.  This is
therefore bounded by $\delta_2 < 1$, for large $n$.  We also have
\begin{align*}
  \exp\{-(a_t &- a_n-\ell(t))\mu(X_1>v_{a_t})\} \\ & =
  \exp\{-(a_t-a_n)\mu(X_1>v_{a_t})\} \cdot
  \exp\{\ell(t) \mu(X_1>v_{a_t})\}\\ & = \exp \{-(a_t-a_n)
  \mu(X_1>v_{a_t})\} \cdot \exp\{D\kappa t\log\log a_t/a_t\}\\ &
  = \exp\{-(a_t-a_n)\mu(X_1>v_{a_t})\}\cdot
  e^{o(1)},\quad(n\to\infty).
\end{align*}
The latter $e^{o(1)}$ comes from the precise form of $a_t$.
Hence it suffices to show that
\[
\sum_{t=n+1}^{M'}\exp\{-(a_t-a_n)\mu(X_1>v_{a_t})\}<\delta_3,
\]
for $\delta_3$ sufficiently small. However this is now the same
argument as used in \cite{Galambos}, as it depends only on $a_t$,
and the assumption on the asymptotics of $\mu(X_1>v_n)$. The
formalities depend on splitting $t\in[n+1,M']$ into three time
windows, and the bounds utilise equations
\eqref{eq.MM7} and \eqref{eq.MM9}.
\end{proof}

From this lemma, we can deduce first the weaker conclusion,
namely that if $\mu(X_1>v_n)\leq c\log\log n/n$ for $c<1$, then
$\mu(M_n\leq v_n \text{ i.o.})=1$. This follows from the fact
that by equation (\ref{eq:extra8})
\[
\mu(M_{a_n}\leq v_{a_n})=\frac{C}{(\log
	a_n)^c}+O(a_{n}^{-\beta})\approx \frac{C}{n^c},
\]
(which is not summable and, hence, \eqref{eq.MM2} is satisfied),
and that \eqref{eq.MM3} holds for this sequence.

To complete the proof of Theorem~\ref{the:lowerbound1}, it is
enough to show that the choice of $a_n$ is enough to conclude
that the 2nd half of the Robbins--Siegmund condition implies
$\mu(M_n\leq v_n \text{ i.o.})=1$. Following \cite{Galambos}, we
show that for a sequence $v_n$ satisfying
\[
\sum_{n = 1}^\infty \mu (X_1>v_n) = \infty \qquad
\text{and} \qquad \sum_{n = 1}^\infty \mu (X_1>v_n)
e^{- n \mu (X_1>v_n)} = \infty,
\]
then $\sum_{n=1}^{\infty}\mu(M_{a_n}\leq v_{a_n})=\infty$. 

Since $\sum_n (a_n)^{-\beta}<\infty$, we have to show by
(\ref{eq:extra8}) that
$\sum_{n=1}^{\infty}\exp\{-a_n\mu(X_1>v_{a_n})\}=\infty$. By
monotonicity considerations, we have
\begin{align*}
  \infty &=\sum_{n=1}^{\infty} \sum_{j=a_n}^{a_{n+1}} \mu
  (X_1>v_j) e^{- j \mu (X_1>v_j)}\\ &\leq \sum_{n=1}^{\infty}
  \mu(X_1>v_{a_{n}})(a_{n+1}-a_n) \exp \{ - a_n \mu
  (X_1>v_{a_{n+1}})\}.
\end{align*}
Hence the implication follows as in \cite{Galambos}.

\section{Proof of Theorem~\ref{the:lowerbound3}}\label{sec.pf.thm.lb3}
We split this section up into two parts, and treat cases (1) and (2)
of Theorem~\ref{the:lowerbound3} separately.

\subsection{Proof of Theorem~\ref{the:lowerbound3} Case (1)}\label{sec.pf.thm.lb3case1}
Following the methods of Section~\ref{sec.proof.preliminary} leading
to Corollary~\ref{cor.lem:Ml}, we obtain
\[
  \mu ( M_l < u_n ) \leq e^C \exp (-l \mu ( \phi \geq u_n ) )
  +n\Xi_{p,n} + qt\mu( \phi \geq u_n ) +\frac{c_2}{l^{1+s}},
\]
where $p=[l^s]$, and $t=[\log(l^{\frac{2}{\tau}})]$. Take $l=n$ and
for $\rho>0$ suppose that $\mu (\phi \geq u_n ) \leq (\log n)^\rho
/n$. Then by (A2) we have that
\begin{equation}\label{eq.mn-errorsplit}
  \mu ( M_n < u_n ) \leq c_6 \exp (-n \mu ( \phi \geq u_n ) )
  + O (n^{-\gamma'})
\end{equation}
for some $\gamma' \in (0,1)$. We remark here that it is indeed
sufficient to restrict to $\mu (\phi \geq u_n ) \leq (\log n)^\rho
/n$ rather than the more general case $\mu (\phi \geq u_n )\approx
n^{-\sigma}$ for some $\sigma\in(0,1)$. For the latter case the error
term $n^{-\gamma'}$ in equation \eqref{eq.mn-errorsplit} would
dominate.

Let $a > 1$ and take $n_k = [a^k]$. Then $n_k^{-\gamma}$ is
summable over $k$. We state the following result, which we prove
at the end of this section.

\begin{proposition} \label{prop:Cauchycondensation}
  Suppose that $n \mapsto n \mu ( \phi \geq u_n )$ is
  non-decreasing and positive. Then for $a > 1$ and $\theta > 0$ we
  have
  \begin{align*}
    \sum_{k=1}^\infty e^{- \theta [a^k] \mu ( \phi \geq
      u_{[a^{k+1}]} ) } < \infty \qquad & \Rightarrow \qquad
    \sum_{n=1}^\infty \mu ( \phi \geq u_n ) e^{- n \theta \mu
      ( \phi \geq u_n)} < \infty \\ & \Rightarrow \qquad
    \sum_{k=1}^\infty e^{- \theta [a^k] \mu ( \phi \geq
      u_{[a^{k}]} ) } < \infty.
  \end{align*}
\end{proposition}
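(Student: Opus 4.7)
The result is a generalised Cauchy condensation. I write $p_n = \mu(\phi \geq u_n)$, $g(n) = n p_n$ (non-decreasing and positive by hypothesis), $n_k = [a^k]$, and $A_k = g(n_k)$. The plan for both implications is to partition $\mathbbm{N}$ into the blocks $I_k = [n_k, n_{k+1})$, use the monotonicity of $g$ to control $g(n)$ uniformly on each block, and compare the full series with the condensed one. The main subtlety is that although $g$ is non-decreasing, there is no bound on the ratio $A_{k+1}/A_k$, so polynomial factors in $A_k$ cannot simply be absorbed.

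For the first implication I would use the elementary inequality $x e^{-\alpha \theta x} \leq (\alpha \theta e)^{-1}$, valid for any $\alpha \in (0,1)$ and $x \geq 0$, to rewrite $p_n e^{-\theta g(n)} = (g(n)/n) e^{-\theta g(n)} \leq (C_\alpha / n) e^{-(1-\alpha)\theta g(n)}$. Summing over $n \in I_k$ and using $\sum_{n \in I_k} 1/n \leq \log(n_{k+1}/n_k) \to \log a$, together with $g(n) \geq A_k$, yields
\[
\sum_n p_n e^{-\theta g(n)} \leq C \sum_k e^{-(1-\alpha)\theta A_k}.
\]
Shifting the index in the hypothesis gives summability of $\sum_k e^{-\theta n_{k-1} p_{n_k}}$, and since $n_{k-1}/n_k \to 1/a$ we have $n_{k-1} p_{n_k} \leq (1/a + \delta) A_k$ for $k$ large. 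Comparison then gives $\sum_k e^{-\theta A_k / a'} < \infty$ for every $a' < a$. Choosing $\alpha \in (0, 1 - 1/a)$, possible precisely because $a > 1$, and then $a' < a$ with $1 - \alpha \geq 1/a'$, closes the implication.

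For the second implication the reverse block estimate is used: on $I_k$ the bounds $g(n) \leq A_{k+1}$ and $g(n) \geq A_k$ give $p_n = g(n)/n \geq A_k/n$ and $e^{-\theta g(n)} \geq e^{-\theta A_{k+1}}$, so
\[
\sum_{n \in I_k} p_n e^{-\theta g(n)} \geq A_k e^{-\theta A_{k+1}} \sum_{n \in I_k} \frac{1}{n} \geq c A_k e^{-\theta A_{k+1}}
\]
for large $k$. Summability of the full series therefore forces $\sum_k A_k e^{-\theta A_{k+1}} < \infty$. Since $g$ is non-decreasing and positive, this is impossible unless $A_k \to \infty$, as otherwise the terms would be bounded away from zero. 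Re-indexing gives $\sum_k A_{k-1} e^{-\theta A_k} < \infty$, and $A_{k-1} \geq 1$ eventually then yields $\sum_k e^{-\theta A_k} < \infty$, which is the desired condensed series.

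The main obstacle lies in the first implication. The naive block estimate produces a series of the form $\sum_k A_k e^{-\theta A_k}$, in which the polynomial pre-factor $A_k$ cannot be dominated by any term supplied by the hypothesis. The splitting $e^{-\theta g(n)} = e^{-\alpha\theta g(n)} e^{-(1-\alpha)\theta g(n)}$ together with the uniform bound on $g(n) e^{-\alpha\theta g(n)}$ trades this pre-factor for a small sacrifice $\alpha$ in the exponential rate; the assumption $a > 1$ is what supplies enough slack $1 - 1/a > 0$ in the exponent to absorb this sacrifice while still matching the hypothesis.
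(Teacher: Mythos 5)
Your proof is correct and rests on the same core mechanism as the paper's: the polynomial prefactor $A_k = n_k\mu(\phi\geq u_{n_k})$ is traded for a small loss in the exponential rate, and $a>1$ provides the slack $1-1/a>0$ that makes the trade affordable. The paper organises this via a standalone Cauchy-condensation lemma and argues the two implications by contraposition, encoding the absorption step as the inequality $A_k e^{-\theta A_k}\leq K\exp\bigl(-\theta([a^{k-1}]/[a^k])A_k\bigr)$, but that is the same elementary bound $x e^{-\alpha\theta x}\leq(\alpha\theta e)^{-1}$ you invoke, so the two arguments coincide in substance and differ only in packaging.
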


Using Proposition~\ref{prop:Cauchycondensation} and
\eqref{eq.mn-errorsplit} we have almost surely that there exists
a $k_0$ such that $M_{n_k} \geq u_{n_k}$ for all $k \geq k_0$.

Suppose that such a $k_0$ exists. Let $n > n_{k_0}$, and take $k$
such that $n_k \leq n < n_{k+1}$. Then
\[
M_n \geq M_{n_k} \geq u_{n_k}.
\]
Since $u_n$ is an increasing sequence, we obtain that
\[
M_n \geq u_{[n/a]}
\]
holds for all $n > n_{k_0}$. This proves the first statement of
the theorem.

Finally, suppose that $\mu (\phi \geq u_n) \geq c \frac{\log
  \log n}{n}$ for some $c > 1$. Put $\tilde{u}_n =
u_{[\tilde{a}n]}$. Since $c > 1$, we can choose $\tilde{a} > 1$
close to one so that  $\mu (\phi \geq \tilde{u}_n ) \geq
\tilde{c} \frac{\log \log n}{n}$ for large $n$, where $\tilde{c}
> 1$.

Then
\[
\sum_{k=1}^\infty e^{-[a^k] \mu ( \phi \geq \tilde{u}_{[a^{k+1}]}
  )} \leq \sum_{k=1}^\infty e^{- \tilde{c} \log \log [a^{k+1}]} <
\infty
\]
holds for $a > 1$. It follows that almost surely $M_n \geq
\tilde{u}_{[n/a]}$ holds eventually. Take $1 < a <
\tilde{a}$. Then $\tilde{u}_{[n/a]} = u_{[\tilde{a}[n/a]]} > u_n$
holds when $n$ is large, and the result follows.

\subsubsection*{Proof of
  Proposition~\ref{prop:Cauchycondensation}}

We first prove a variant of Cauchy condensation. Suppose that $a
> 1$ and that $c_k$ is a sequence of positive numbers with
$c_{k+1} \leq c_k$ for all $k$. Let $C = c_1 + c_2 + \ldots +
c_{[a] - 1}$.  Then
\begin{align*}
  \sum_{n=1}^\infty c_n &= C + \sum_{k=1}^\infty \sum_{j =
    [a^k]}^{[a^{k+1}] - 1} c_j \leq C + \sum_{k=1}^\infty \sum_{j
    = [a^k]}^{[a^{k+1}] - 1} c_{[a^k]} \\ &= C +
  \sum_{k=1}^\infty ([a^{k+1}] - [a^k]) c_{[a^k]} \leq C +
  \sum_{k=1}^\infty (a^{k+1} - a^k + 1) c_{[a^k]} \\ &\leq C +
  \sum_{k=1}^\infty a^{k+1} c_{[a^k]}.
  \end{align*}
Hence,
\[
\sum_{k=1}^\infty a^k c_{[a^k]} < \infty \qquad \Rightarrow
\qquad \sum_{n=1}^\infty c_n < \infty.
\]

Similarly, we have
\begin{align*}
  \sum_{n=1}^\infty c_n &= C + \sum_{k=1}^\infty \sum_{j =
    [a^k]}^{[a^{k+1}] - 1} c_j \geq C + \sum_{k=1}^\infty
  \sum_{j = [a^k]}^{[a^{k+1}] - 1} c_{[a^{k+1}]} \\ &= C +
  \sum_{k=1}^\infty ([a^{k+1}] - [a^k]) c_{[a^{k+1}]} \geq C
  + \sum_{k=1}^\infty (a^{k+1} - 1 - a^k) c_{[a^{k+1}]} \\ &= C +
  \sum_{k=1}^\infty (a^{k+1} (1 - a^{-1}) - 1) c_{[a^{k+1}]}.
\end{align*}
Hence, we have proved
\[
\sum_{k=1}^\infty a^k c_{[a^k]} < \infty \qquad \Leftrightarrow
\qquad \sum_{n=1}^\infty c_n < \infty.
\]
  
Now, since $n \mapsto n \mu ( \phi \geq u_n )$ is non-decreasing,
$n \mapsto \exp (- n \theta \mu ( \phi \geq u_n ))$ is
non-increasing. Hence, since $[a^k] \theta \mu ( X \geq u_{[a^k]} )
\geq c$ for some $c > 0$, we have
\begin{align*}
  \sum_{k=1}^\infty e^{-[a^k] \theta \mu ( X \geq u_{[a^k]} )}
  = \infty \ & \Rightarrow \ \sum_{k=1}^\infty [a^k] \theta \mu
  ( X \geq u_{[a^k]} ) e^{- [a^k] \theta \mu ( X \geq
    u_{[a^k]} )} = \infty \\ & \Leftrightarrow
  \ \sum_{k=1}^\infty \theta \mu ( X \geq u_n ) e^{- n \theta
    \mu ( X \geq u_n )} = \infty.
\end{align*}

Finally, provided that there exists a constant $K > 0$ such that
\begin{equation} \label{eq:condensationcondition}
  [a^k] \mu ( X \geq u_{[a^k]} ) e^{- [a^k] \theta \mu ( X
    \geq u_{[a^k]} )} \leq K e^{-[a^{k-1}] \theta \mu ( X \geq
    u_{[a^k]} )}
\end{equation}
for all large $k$, we have
\begin{align*}
  \sum_{k=1}^\infty \mu ( X \geq u_n ) &e^{- n \theta \mu ( X
    \geq u_n )} = \infty \\ & \Leftrightarrow \quad
  \sum_{k=1}^\infty [a^k] \mu ( X \geq u_{[a^k]} ) e^{- [a^k]
    \theta \mu ( X \geq u_{[a^k]} )} = \infty \\ & \Rightarrow
  \quad \sum_{k=1}^\infty e^{-[a^{k-1}] \theta \mu ( X \geq
    u_{[a^k]} )} = \infty \\ & \Leftrightarrow \quad
  \sum_{k=1}^\infty e^{-[a^{k}] \theta \mu ( X \geq
    u_{[a^{k+1}]} )} = \infty.
\end{align*}
The condition \eqref{eq:condensationcondition} is implied by the
condition
\[
  [a^k] \mu ( X \geq u_{[a^k]} ) \leq K e^{(1 -
    [a^{k-1}]/[a^k]) [a^k] \theta \mu ( X \geq u_{[a^k]} )}.
\]
Since $(1 - [a^{k-1}]/[a^k])$ is bounded away from $0$ for large
$k$, it is possible to find a $K$ such that
\eqref{eq:condensationcondition} holds for all large $k$. \qed

\subsection{Proof of Theorem~\ref{the:lowerbound3} Case (2)}\label{sec.pf.thm.lb3case2}

The arguments
mirror those used for proving case (2) of
Theorem~\ref{the:lowerbound1}, but with fine adjustments used for the
sequences.  We start with an immediate consequence of assumptions (A1)
and (A3), where the decay rate is given by $\Theta(j)=\exp(-\tau
j)$. The following lemma builds upon Lemma~\ref{lem:split}
\begin{lemma} \label{lem:split2}
  There is a constant $c_1$ such that for $m>n$ and $\ell \leq m-n$,
  \begin{multline*}
    \mu (\{ M_n \leq u_n\} \cap \{ M_m \leq u_m\} ) \\ \leq \mu ( M_n
    \leq u_n ) \mu ( M_{m-n-\ell}\leq u_{m} ) + c_1 K^n e^{-\tau
      \cdot (n+\ell)}.
  \end{multline*}
\end{lemma}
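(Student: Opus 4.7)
The plan is to mirror the splitting argument of Lemma~\ref{lem:split}; the difference is that now \emph{both} factors are indicators of running maxima rather than one being a single-time event. The task is therefore to rewrite the joint event $\{M_n\leq u_n\}\cap\{M_m\leq u_m\}$ so as to expose a time-gap of length $n+\ell$, after which the estimate will follow directly from exponential decay of correlations (A1), with (A3) used — exactly as in Lemma~\ref{lem:split} — to bound the $\mathrm{BV}$-norm of the relevant indicator by $c_0 K^n$.

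First, using that $(u_k)$ is non-decreasing so $\{M_n\leq u_n\}\subset\{M_n\leq u_m\}$, and decomposing $M_m = \max(M_n,X_{n+1},\ldots,X_m)$, I would factor
\[
\{M_n\leq u_n\}\cap\{M_m\leq u_m\} = \{M_n\leq u_n\}\cap f^{-n}\{M_{m-n}\leq u_m\}.
\]
To open a gap of length $\ell$ before the second block, drop the first $\ell$ coordinates of the inner maximum (which only enlarges the set):
\[
\{M_{m-n}\leq u_m\} = \bigcap_{j=0}^{m-n-1} f^{-j}\{\phi\leq u_m\} \subset f^{-\ell}\{M_{m-n-\ell}\leq u_m\},
\]
so that
\[
\{M_n\leq u_n\}\cap\{M_m\leq u_m\} \subset \{M_n\leq u_n\}\cap f^{-(n+\ell)}\{M_{m-n-\ell}\leq u_m\}.
\]

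Finally, I would apply (A1) with $\varphi_1 = \mathbbm{1}_{\{M_n\leq u_n\}}$, $\varphi_2 = \mathbbm{1}_{\{M_{m-n-\ell}\leq u_m\}}$, at time lag $n+\ell$. As in Lemma~\ref{lem:split}, iterating (A3) shows that each $f^{-j}\{\phi\leq u_n\}$ has at most $K_f^{\,j}$ connected components, so the intersection defining $\{M_n\leq u_n\}$ has at most $O(K^n)$ components for a suitable $K>1$; this bounds $\|\varphi_1\|_{\mathrm{BV}}$ by $c_0 K^n$, while $\|\varphi_2\|_\infty \leq 1$ is trivial. With $\Theta(n+\ell)=e^{-\tau(n+\ell)}$ from (A1), this produces
\[
\mu\bigl(\{M_n\leq u_n\}\cap\{M_m\leq u_m\}\bigr) \leq \mu(M_n\leq u_n)\,\mu(M_{m-n-\ell}\leq u_m) + c_1 K^n e^{-\tau(n+\ell)},
\]
which is the claimed bound. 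There is no serious obstacle; the only care needed is the set-theoretic rewriting that turns the joint maxima event into a product with a genuine decorrelation gap of length $n+\ell$, and the BV bound on $\mathbbm{1}_{\{M_n\leq u_n\}}$, which is identical to that already used in the proof of Lemma~\ref{lem:split}.
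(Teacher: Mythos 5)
Your proof is correct and follows essentially the same argument as the paper: both rewrite the joint event as $\{M_n\leq u_n\}\cap f^{-(n+\ell)}\{M_{m-n-\ell}\leq u_m\}$ (using monotonicity of $u_n$ and dropping $\ell$ coordinates to open the decorrelation gap), then apply (A1) with $\varphi_1=\mathbbm{1}_{\{M_n\leq u_n\}}$, $\varphi_2=\mathbbm{1}_{\{M_{m-n-\ell}\leq u_m\}}$ at lag $n+\ell$, with (A3) giving the $\mathrm{BV}$-bound $\|\varphi_1\|_{\mathrm{BV}}\leq c_0 K^n$. Your write-up is actually a bit more explicit than the paper's about where the non-decreasing hypothesis on $(u_k)$ enters.
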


\begin{proof}
  Let $\varphi_1 (x) = \mathbbm{1}_{\{M_{n}\leq u_n\}}(x)$ and
  $\varphi_2(x) = \mathbbm{1}_{\{ M_{m-n-\ell}\leq u_{m} \}}(x)$. We
  estimate the $\mathrm{BV}$-norm of $\varphi_1$. Since for any
  interval $A$, $f^{-1}(A)$ has at most $K_{f}$ connected components
  (by (A3)), it follows that the $\mathrm{BV}$-norm of $\varphi_1$ is
  bounded by $K^n$, for some constant $K$.

  Using decay of correlations, we get that
  \begin{align*}
    \mu (\{ M_n \leq u_n\} & \cap \{ M_m \leq u_m\}) \\ &=
    \mu (\{ M_n \leq u_n\} \cap \{ M_{m-n} \circ f^n \leq
    u_m\} ) \\ & \leq \mu (\{ M_n \leq u_n\} \cap \{
    M_{m-n-\ell} \circ f^{n+\ell} \leq u_m\} ) \\ &= \int
    \varphi_1 \cdot \varphi_2 \circ f^{n+\ell} \, \mathrm{d} \mu
    \\ &\leq \int \varphi_1 \, \mathrm{d} \mu \int \varphi_2 \,
    \mathrm{d} \mu + C e^{-\tau \cdot (n+\ell)} \lVert \varphi_1
    \rVert_{\mathrm{BV}} \lVert \varphi_2 \rVert_\infty \\ & \leq \mu
    ( M_n \leq u_n ) \mu ( M_{m-n-\ell}\leq u_{m} ) + c_1 K^n
    e^{-\tau \cdot (n+\ell)}. \qedhere
  \end{align*}
\end{proof}

We modify the proof of part (2) of Theorem~3.2, still following
\cite[Section~4]{Galambos}. This time we define the sequence $(a_n)$
recursively:
\[
a_{n+1}= a(n+1) = \bigl(1+(\log\log(a_{n}))^3 \bigr)\cdot a_n, \quad
\quad a_0=\exp(\lambda)
\]
for a given $\lambda>1$. 

As before we want to show that 
\begin{equation}\label{eq.MM22}
  \sum_{n=1}^{\infty}\mu(M_{a_n}\leq v_{a_n})=\infty,
\end{equation}
and
\begin{equation}\label{eq.MM32}
  \sum_{t=n+1}^{M'}\mu \bigl( \{M_{a_n}\leq
  v_{a_n}\}\cap\{M_{a_t}\leq v_{a_t}\} \bigr) \leq \delta
  \mu(M_{a_n}\leq v_{a_n})
\end{equation}
for all $M'>n$. Once again, we consider a further sequence
$\ell(t)$, with $\ell(t)<a(t)-a(n)$, and defined for $t>n$. Since
$a_{n+1}-a_n=(\log\log(a_{n}))^3\cdot a_n$, we can choose
$\ell(t)=(\log\log(a_{t-1}))\cdot a_{t-1}$ for all $t>n$. As
before, $\ell(t)$ is used to de-correlate successive maxima. By
Lemma~\ref{lem:split2} we have:
\begin{multline}\label{eq.MM62}
  \mu (\{ M_{a_n}\leq v_{a_n}\} \cap \{M_{a_t}\leq v_{a_t}\} )
  \\ \leq \mu(M_{a_n}\leq v_{a_n}) \mu
  (M_{a_t-a_n-\ell_t}\leq v_{a_t} ) + c_1 K^{a_n} e^{- \tau
    \cdot (a_n+\ell_t)}.
\end{multline}

By \cite[Lemma~4.3.2]{Galambos} it suffices to consider $v_n$
such that $\mu(X_1>v_n)\approx\log\log n/n$, with $\approx$
denoting multiplication by a constant within $[1/2,2]$.  In
addition, the dynamical blocking arguments in Lemma~7.2 give
\begin{align}
  \mu (M_{a_n}\leq v_{a_n}) &= C e^{-a_n \mu (X_1>v_{a_n})} +
  O (a_{n}^{-\beta}), \label{eq:extra82}\\ \mu (
  M_{a_t-a_n-\ell_t} \leq v_{a_t} ) &= C e^{-(a_t-a_n-\ell_t)
    \mu (X_1>v_{a_t})} + O ( {\scriptstyle
    (a_t-a_n-\ell_t)^{-\beta}} ), \nonumber
\end{align}
for some constants $C, \beta > 0$. 

\begin{lemma}
  Assume that \eqref{eq.MM22} holds.  Then equation
  \eqref{eq.MM32} holds.
\end{lemma}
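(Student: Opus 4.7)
The strategy is to mirror the argument used in Section~\ref{sec.pf.main-case2} for the corresponding lemma in the proof of Theorem~\ref{the:lowerbound1}, Case~(2), with the sequence $(a_n)$ and decorrelation window $\ell(t)$ replaced by the new choices. Starting from \eqref{eq.MM62}, I would factor out $\mu(M_{a_n}\leq v_{a_n})$ and show that the remaining sum over $t\in[n+1,M']$ is bounded by a constant $\delta<1$ uniformly in $n$ and $M'$. This reduces to controlling two pieces: the error term from Lemma~\ref{lem:split2}, namely $c_1 K^{a_n}e^{-\tau(a_n+\ell_t)}$ (divided by $\mu(M_{a_n}\leq v_{a_n})$), and the main contribution $\mu(M_{a_t-a_n-\ell_t}\leq v_{a_t})$.

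For the error term, recall from \eqref{eq:extra82} that $\mu(M_{a_n}\leq v_{a_n})\gtrsim(\log a_n)^{-D}$ for $D\in[1/2,2]$. Since $\ell_t=(\log\log a_{t-1})\,a_{t-1}\geq(\log\log a_n)\,a_n$ for every $t>n$, one has $e^{-\tau\ell_t}\leq e^{-\tau(\log\log a_n)\,a_n}$, which dominates $K^{a_n}$ and $(\log a_n)^{D}$ by a wide margin. Moreover $\ell_t$ is itself rapidly growing in $t$, so the resulting error is geometrically summable and totally negligible. For the second part of the main contribution in \eqref{eq:extra82}, namely $O((a_t-a_n-\ell_t)^{-\beta})$, the rapid growth of $(a_t)$ forces $a_t-a_n-\ell_t\geq \tfrac{1}{2}(a_t-a_{t-1})=\tfrac{1}{2}(\log\log a_{t-1})^3a_{t-1}$, so this part is also summable with small tail.

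For the exponential part, the key observation is that $\ell_t\,\mu(X_1>v_{a_t})=o(1)$ in $t$: indeed $\ell_t\asymp(\log\log a_{t-1})\,a_{t-1}$ while $\mu(X_1>v_{a_t})\asymp\log\log a_t/a_t$, and the ratio $a_{t-1}/a_t=(1+(\log\log a_{t-1})^3)^{-1}$ beats the logarithmic factors. Consequently
\begin{equation*}
  \exp\bigl(-(a_t-a_n-\ell_t)\mu(X_1>v_{a_t})\bigr)\leq 2\exp\bigl(-(a_t-a_n)\mu(X_1>v_{a_t})\bigr)
\end{equation*}
for all $t$ large, and the problem is reduced to estimating $\sum_{t>n}\exp(-(a_t-a_n)\mu(X_1>v_{a_t}))$. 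I would split this sum into the three time windows used in \cite[Lemma~4.3.3]{Galambos}: $t$ close to $n$ (where $a_t-a_n$ is a fixed fraction of $a_t$, using $a_{t}/a_{t-1}-1=(\log\log a_{t-1})^3\to\infty$, so the exponent is at least of order $\log\log a_t$), an intermediate window, and a tail window (where $a_t\gg a_n$ and the exponent $(a_t-a_n)\mu(X_1>v_{a_t})$ is essentially $c\log\log a_t$ with $c$ effectively arbitrarily large by choice of the constant $\lambda$ in the recursion and by the growth of $a_t$). In each window the exponentials form a convergent geometric-type series whose sum can be made smaller than $\delta/3$ upon taking $n$ sufficiently large, by virtue of the precise growth $a_{n+1}/a_n=1+(\log\log a_n)^3$.

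The main obstacle is the bookkeeping in the first window, i.e.\ the terms with $t=n+1,n+2,\ldots$ where $a_t-a_n$ is only a moderately large multiple of $a_t$; one must check that $(a_t-a_n)\mu(X_1>v_{a_t})$ already exceeds a large multiple of $\log\log a_t$, so that the individual terms $(\log a_t)^{-c}$ sum to something small. This uses the cubic power in $(\log\log a_{t-1})^3$ (rather than a linear or square term) in the recursion defining $(a_n)$, which is precisely why the sequence $a_{n+1}=(1+(\log\log a_n)^3)a_n$ was chosen instead of the geometric sequence $a_n=\mathrm{e}^{\lambda n/\log n}$ used in Section~\ref{sec.pf.main-case2}. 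Assembling the three window estimates and adding back the negligible error contributions yields the required bound $\delta<1$, and hence \eqref{eq.MM32}, completing the proof.
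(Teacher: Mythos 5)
Your proposal follows the same overall route as the paper: factor out $\mu(M_{a_n}\leq v_{a_n})$ from \eqref{eq.MM62}, control the decay-of-correlations error term $c_1 K^{a_n}e^{-\tau(a_n+\ell_t)}/\mu(M_{a_n}\leq v_{a_n})$, control the polynomial $O((a_t-a_n-\ell_t)^{-\beta})$ remainder, verify that $\ell_t\,\mu(X_1>v_{a_t})=o(1)$, and reduce to bounding $\sum_{t>n}\exp\{-(a_t-a_n)\mu(X_1>v_{a_t})\}$. Your identification of the role of the cubic in $\ell_t\,\mu(X_1>v_{a_t})\to 0$ is also exactly the computation the paper performs.

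However, your explanation of why the reduced sum is small contains a misconception. You claim that in the tail window the exponent $(a_t-a_n)\mu(X_1>v_{a_t})$ is $c\log\log a_t$ ``with $c$ effectively arbitrarily large by choice of the constant $\lambda$ in the recursion''. This is not right: since $(a_t-a_n)\mu(X_1>v_{a_t}) = D\bigl(1-a_n/a_t\bigr)\log\log a_t$ with $D\in[1/2,2]$ (and $\lambda$ only sets the seed $a_0$, not the asymptotic constant), the exponent is bounded above by $2\log\log a_t$, and the individual terms are of size $(\log a_t)^{-D}\approx t^{-D}$ with possibly $D\leq 1$. These need not form a convergent series, let alone a geometric one, and no adjustment of the recursion constant can fix this. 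The smallness of the tail-window sum instead comes from the choice of $M'$ at the very start of the argument, with $\Delta\leq\sum_{n=M}^{M'}\mu(M_{a_n}\leq v_{a_n})\leq 2\Delta$ (the analogue of \eqref{eq.MM9} used in \eqref{eq.MM22}), together with the approximation $\exp\{-(a_t-a_n)\mu(X_1>v_{a_t})\}\approx\mu(M_{a_t}\leq v_{a_t})/C$ for $t$ in the tail window. You invoke Galambos's Lemma 4.3.3 in passing, which does contain this device, but your own heuristic for why the windowed sums are $<\delta/3$ would not survive if spelled out; you should make the role of the $\Delta$-choice of $M'$ explicit rather than attributing the smallness to taking $n$ large and to the cubic power.
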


\begin{proof}
  By assumption of \eqref{eq.MM22}, and given any $\Delta>0$ we
  can choose $M'>M$ so that
  \begin{equation*} 
    \Delta \leq \sum_{n=M}^{M'}\mu(M_{a_n}\leq v_{a_n}) \leq
    2\Delta.
  \end{equation*}
  (This is valid when $\mu(M_{a_n}\leq v_{a_n})\to 0$, which is
  true in our case by \eqref{eq:extra82} and
  $\mu(X_1>v_n)\approx\log\log n/n$).  Now, let us consider the
  right hand terms of \eqref{eq.MM62}. We factor out
  $\mu(M_{a_n}\leq v_{a_n})$ as follows,
  \begin{multline*}
    \mu(M_{a_n}\leq v_{a_n})\mu(M_{a_t-a_n-\ell_t}\leq
    v_{a_t})+c_1 K^{a_n} e^{- \tau \cdot
      (a_n+\ell_t)}\\ =\mu(M_{a_n}\leq v_{a_n}) \biggl(
    \mu(M_{a_t-a_n-\ell_t}\leq v_{a_t}) +\frac{ c_1 K^{a_n} e^{-
        \tau \cdot (a_n+\ell_t)}}{\mu(M_{a_n}\leq v_{a_n})} \biggr),
  \end{multline*}
  and, hence, to show \eqref{eq.MM32} it is sufficient to show the
  final bracketed term can be bounded by $\delta<1$, when
  $\ell(t)=(\log\log(a_{t-1}))\cdot a_{t-1}$, and after summing over
  $t\in[n+1,M']$. Consider the exponential decay of correlation term
  within the bracket. With equation \eqref{eq:extra82} this is bounded
  as follows.
  \begin{equation*}
    \frac{ c_1 K^{a_n} e^{- \tau \cdot (a_n+\ell_t)}}{\mu(M_{a_n}\leq
      v_{a_n})} \leq C K^{a_n} e^{- \tau \cdot (a_n+\ell_t)} \cdot
    \bigl( e^{-a_n\mu(X_1>v_{a_n})} + O(a_{n}^{-\beta}) \bigr)^{-1},
  \end{equation*}
  Using $\mu(X_1>v_n)\approx\log\log n/n$ and our choices for $a_n$ as
  well as $\ell(t)$ gives a bound
  \[
  C e^{\log(K)a_n-\tau \cdot (a_n+\ell_t)} \cdot \Bigl(\frac{1}{(\log
    a_n)^D} + O(a_{n}^{-\beta} )\Bigr)^{-1}
  \]
  with $D\in [1/2,2]$. This term decays exponentially fast in
  $t\in[n+1,M']$, and the first term (for $t=n+1$) is $o(1)$ for large
  $n$. Thus the sum of this contribution is bounded by $\delta_1 < 1$,
  for large $n$. It now suffices to consider
  \begin{multline*}
    \mu(M_{a_t-a_n-\ell(t)} \leq v_{a_t}) \\ =
    C\exp\{-(a_t-a_n-\ell(t)) \mu(X_1>v_{a_t})\} + O(
    \scriptstyle{(a_t - a_n - \ell(t))^{-\beta}}).
  \end{multline*}
  The $O(\cdot)$ term is again summable, and decays exponentially fast
  with $(a_t-a_n-\ell(t))^{-\beta}=o(1)$ for $t=n+1$.  This is
  therefore bounded by $\delta_2 < 1$, for large $n$.  We also have
  \begin{align*}
    \exp\{-(a_t &- a_n-\ell(t))\mu(X_1>v_{a_t})\} \\ & =
    \exp\{-(a_t-a_n)\mu(X_1>v_{a_t})\} \cdot \exp\{\ell(t)
    \mu(X_1>v_{a_t})\}\\ & = \exp \{-(a_t-a_n) \mu(X_1>v_{a_t})\}
    \cdot \exp\{D\ell(t)\log\log a_t/a_t\}\\ & =
    \exp\{-(a_t-a_n)\mu(X_1>v_{a_t})\}\cdot
    e^{o(1)},\quad(n\to\infty).
  \end{align*}
  The latter $e^{o(1)}$ comes from the precise form of $a_t$ and 
  \begin{multline*}
    D\ell(t) \frac{\log\log a_t}{a_t} \\ =
    D\log\log(a_{t-1})\frac{\log\log\bigl(1+(\log\log
      a_{t-1})^3\bigr)+\log\log(a_{t-1})}{\bigl(1+(\log\log
      a_{t-1})^3\bigr)} \to 0.
  \end{multline*}
  Hence it suffices to show that
  \[
  \sum_{t=n+1}^{M'}\exp\{-(a_t-a_n)\mu(X_1>v_{a_t})\}<\delta_3,
  \]
  for $\delta_3$ sufficiently small. This follows from
  $\mu(X_1>v_n)\approx\log\log n/n$ and our growth of $a_t$.
\end{proof}

Now we complete the proof of case (2) in Theorem~\ref{the:lowerbound3}. It is
enough to show that the choice of $a_n$ is sufficient to conclude
that for a sequence $v_n$ satisfying $n \mapsto n\mu(X_1>v_n)$ is non-decreasing,
\[
\sum_{n = 1}^\infty \mu (X_1>v_n) = \infty, \qquad
\text{and} \qquad \sum_{n = 1}^\infty \mu (X_1>v_n)
e^{- n \gamma \mu (X_1>v_n)} = \infty
\]
for some $\gamma>1$, then $\sum_{n=1}^{\infty}\mu(M_{a_n}\leq v_{a_n})=\infty$. 

Since $\sum_n (a_n)^{-\beta}<\infty$, we have to show by
(\ref{eq:extra82}) that
\[
\sum_{n=1}^{\infty}\exp\{-a_n\mu(X_1>v_{a_n})\}=\infty.
\]
Since $n \mapsto n\mu(X_1>v_n)$ is non-decreasing, we have by
monotonicity considerations that
\begin{align*}
  \infty &=\sum_{n=1}^{\infty} \sum_{j=a_n}^{a_{n+1}} \mu (X_1>v_j)
  e^{- j \gamma \mu (X_1>v_j)}\\ &\leq \sum_{n=1}^{\infty}
  \mu(X_1>v_{a_{n}})(a_{n+1}-a_n) \exp \{ - a_n \gamma \mu
  (X_1>v_{a_{n}})\} \\ & \leq \sum_{n=1}^{\infty} C
  \frac{\log\log(a_n)}{a_n}(a_{n+1}-a_n) \cdot e^{-(\gamma-1)
    D\log\log(a_n)} \cdot e^{ - a_n \mu (X_1>v_{a_{n}})} \\ & =
  \sum_{n=1}^{\infty} C (\log\log(a_n))^4 \cdot \frac{1}{(\log
    a_n)^{(\gamma-1)D}} \cdot \exp \{ - a_n \mu (X_1>v_{a_{n}})\} \\ &
  \leq C_1 \cdot \sum_{n=1}^{\infty} \exp \{ - a_n
  \mu(X_1>v_{a_{n}})\},
\end{align*}
where we used $\gamma>1$ in the last step. Hence, we conclude that
\[
\sum_{n=1}^{\infty}\exp\{-a_n\mu(X_1>v_{a_n})\}=\infty
\]
as required. \qed

\section{Proof of Theorems~\ref{thm.clustering1} and \ref{thm.hyp1}}\label{sec.pf.thm.clustering1}

To prove the theorems stated in Section~\ref{sec.clustering}, we
need a version of equation \eqref{eq.mn-errorsplit} incorporating
the extremal index $\theta$. Before proving each theorem in turn,
we collect relevant results from \cite{FFT3} which adapt the
blocking methods of Section~\ref{sec.overview-block} to the case
$\theta\in(0,1)$.

We use the notations of Section~\ref{sec.clustering}, and for
$s,\ell \geq 0$, and an event $B\subset\mathcal{X}$ we write
\[
\mathcal{W}_{s,\ell}(B) := \cap_{i=s}^{s+\ell-1} f^{-i}
(B^\complement).
\]
(Notice that $\mathcal{W}_{0,n}(U(u)) = \{M_n\leq u\}$.) In the
following, we recall also that $q$ denotes the period of the
hyperbolic periodic point $\tilde{x}$.

Using \cite[Corollary~2.4]{FFT3} combined with
\cite[Proposition~5.1]{F} leads to the following approximation
results. Consider sequences $t_n,k_n \to \infty$, and in the following we take $\lesssim$ to mean '$\leq$' up to a uniform positive
multiplying constant. Then
\begin{multline} \label{eq.escaperate}
  \biggl| \mu \bigl( \mathcal{W}_{0,n}(A^{(q)}_n) \bigr) -
  \biggl(1 - \frac{n}{k_n}\mu(A^{(q)}_n) \biggr)^{k_n} \biggr|
  \\ \lesssim k_n t_n \mu(U_n) + n \gamma_1 (q,n,t_n) + 
	n\gamma_2 (q,n,k_n), 
\end{multline}
where
\begin{align*}
  \gamma_1 (q,n,t_n) & = |\mu(A^{(q)}_n\cap
  \mathcal{W}_{t_n,\ell}(A^{(q)}_n)) - \mu (A^{(q)}_n) \mu
  (\mathcal{W}_{0,\ell} (A^{(q)}_n))|,\\ \gamma_2 (q,n,k_n) & =
  \sum_{j=q+1}^{n/k_n} \mu(A^{(q)}_n \cap f^{-j} (A^{(q)}_n)).
\end{align*}
In addition, we have
\begin{equation}\label{eq.escaperate2}
  \bigl| \mu(M_n\leq u_n) - \mu \bigl(\mathcal{W}_{0,n}(A^{(q)}_n)
  \bigr) \bigr| \leq q \mu (U_n \setminus A^{(q)}_n).
\end{equation}
Putting these results together leads to the following lemma.

\begin{lemma}
  Under the assumptions of equations \eqref{eq.escaperate},
  \eqref{eq.escaperate2} the following formula is valid:
  \begin{multline} \label{eq.escaperate3}
    | \mu(M_n\leq u_n) - \exp (-n\theta \mu ( \phi \geq u_n ))| \\ 
    \lesssim k_n t_n \mu(U_n) + n \gamma_1 (q,n,t_n) +
     n\gamma_2(q,n,k_n)\\ +q \mu (U_n \setminus
     A^{(q)}_n)+|\theta_n-\theta|+\frac{n^2\mu(A^{(q)}_n)^2}{k_n}.
  \end{multline}
\end{lemma}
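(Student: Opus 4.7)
The plan is to apply the triangle inequality in two stages, inserting \eqref{eq.escaperate} and \eqref{eq.escaperate2} at the appropriate points. First I would write
\[
|\mu(M_n\leq u_n) - e^{-n\theta\mu(\phi\geq u_n)}| \leq |\mu(M_n\leq u_n) - \mu(\mathcal{W}_{0,n}(A^{(q)}_n))| + R_n,
\]
where $R_n$ denotes the remaining gap to the target exponential. The first summand is bounded directly by $q\mu(U_n\setminus A^{(q)}_n)$ via \eqref{eq.escaperate2}, yielding one of the four error contributions appearing on the right-hand side of \eqref{eq.escaperate3}.

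For $R_n$, I would interpose the intermediate quantity $\bigl(1 - \tfrac{n}{k_n}\mu(A^{(q)}_n)\bigr)^{k_n}$, so that $R_n$ splits into (i) the difference between $\mu(\mathcal{W}_{0,n}(A^{(q)}_n))$ and this Poisson-like product, which is exactly what \eqref{eq.escaperate} controls and produces the terms $k_n t_n \mu(U_n) + n\gamma_1(q,n,t_n) + n\gamma_2(q,n,k_n)$; and (ii) the purely analytic difference $\bigl|\bigl(1 - \tfrac{n}{k_n}\mu(A^{(q)}_n)\bigr)^{k_n} - e^{-n\theta\mu(U_n)}\bigr|$.

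Piece (ii) I would bound by passing through $e^{-n\mu(A^{(q)}_n)}$. The elementary estimate $|(1-x/k)^k - e^{-x}| \leq C x^2/k$ for $0\leq x\leq k$, obtained by expanding $k\log(1-x/k)$, applied with $x = n\mu(A^{(q)}_n)$ gives the contribution $n^2\mu(A^{(q)}_n)^2/k_n$. Then using $\mu(A^{(q)}_n) = \theta_n \mu(U_n)$ together with the Lipschitz bound $|e^{-a} - e^{-b}| \leq |a-b|$ applied to $a = n\theta_n\mu(U_n)$ and $b = n\theta\mu(U_n)$ leaves a residue of order $n\mu(U_n)|\theta_n - \theta|$, which (in the regime where $n\mu(U_n)$ is bounded, or by invoking $|e^{-a}-e^{-b}|\leq \min(|a-b|,1)$) produces the $|\theta_n - \theta|$ contribution in \eqref{eq.escaperate3}. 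Collecting the four bounds gives the claimed inequality.

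The main obstacle is the final simplification: the natural estimate produces $n\mu(U_n)|\theta_n-\theta|$ rather than a bare $|\theta_n-\theta|$, so one must argue carefully about the range of $n$ for which the lemma is being applied, or else absorb the prefactor into the $\lesssim$ constant via the uniform bound $|e^{-a}-e^{-b}|\leq \min(|a-b|,1)$. Everything else is essentially routine triangle-inequality bookkeeping built on top of the two inputs \eqref{eq.escaperate} and \eqref{eq.escaperate2}, which carry all the nontrivial dynamical content.
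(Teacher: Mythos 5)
Your decomposition is the same as the paper's: use \eqref{eq.escaperate2} to move from $\{M_n\leq u_n\}$ to $\mathcal{W}_{0,n}(A^{(q)}_n)$, then \eqref{eq.escaperate} to compare with the Poisson-type product $\bigl(1-\tfrac{n}{k_n}\mu(A^{(q)}_n)\bigr)^{k_n}$, and finally Taylor-expand to reach $e^{-n\theta\mu(U_n)}$. That skeleton is correct and is exactly what the paper does (the paper's proof is terse, asserting the two displayed identities without spelling out the last step).

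The genuine gap is the one you flagged, but your proposed repairs do not close it. Writing $\mu(A^{(q)}_n)=\theta_n\mu(U_n)$ and applying the Lipschitz bound $|e^{-a}-e^{-b}|\leq|a-b|$ with $a=n\theta_n\mu(U_n)$, $b=n\theta\mu(U_n)$ yields $n\mu(U_n)\,|\theta_n-\theta|$, not $|\theta_n-\theta|$. Restricting to the regime where $n\mu(U_n)$ is bounded excludes precisely the sequences the results are about (the dichotomy threshold has $n\mu(U_n)\sim\log\log n\to\infty$). And $\min(|a-b|,1)$ gives $\min\bigl(n\mu(U_n)|\theta_n-\theta|,1\bigr)$, which is \emph{not} $\lesssim |\theta_n-\theta|$ when $n\mu(U_n)$ is large and $|\theta_n-\theta|$ is small but not tiny (e.g.\ $n\mu(U_n)\sim\log\log n$ and $|\theta_n-\theta|\sim n^{-\hat\sigma}$ make the $\min$ equal to $n\mu(U_n)|\theta_n-\theta|$, which exceeds $|\theta_n-\theta|$ by an unbounded factor). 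The correct repair is to keep the exponential prefactor from the mean value theorem:
\[
|e^{-a}-e^{-b}|\leq e^{-\min(a,b)}\,|a-b|
= e^{-n\min(\theta,\theta_n)\mu(U_n)}\cdot n\mu(U_n)\,|\theta_n-\theta|.
\]
By (A4), $\theta_n\to\theta>0$, so for all large $n$ we have $\min(\theta,\theta_n)\geq\theta/2$, and $t\mapsto t\,e^{-\theta t/2}$ is uniformly bounded on $[0,\infty)$ (by $2/(\theta e)$). Hence $e^{-n\min(\theta,\theta_n)\mu(U_n)}\,n\mu(U_n)\lesssim_\theta 1$ and the contribution is indeed $\lesssim|\theta_n-\theta|$ uniformly in $n$. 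With that substitution your proof is complete and coincides with the paper's argument.
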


\begin{proof}
  The proof requires justification of the inclusion of the last two
  terms.  From equation \eqref{eq.escaperate}, and using
  $e^{x}=1+x+O(x^2)$ we have:
  \begin{align*}
    \biggl( 1 - \frac{n}{k_n} \mu(A^{(q)}_n) \biggr)^{k_n}
    &=e^{-n\mu(A^{(q)}_n)} + O \biggl(
    \frac{n^2\mu(A^{(q)}_n)^2}{k_n}
    \biggr)\\ &=e^{-n\theta\mu(X_1>u_n)} + O \biggl(
    \frac{n^2\mu(A^{(q)}_n)^2}{k_n} \biggr) +
    O(|\theta-\theta_n|). \qedhere
  \end{align*}
\end{proof}

\subsection{Completing the proofs of Theorem~\ref{thm.clustering1} and Theorem~\ref{thm.hyp1}}

In the case of proving
Theorem~\ref{thm.clustering1} we follow the proof of
Theorem~\ref{the:lowerbound1}, while in the case of proving
Theorem~\ref{thm.hyp1} we follow the proof of
Theorem~\ref{the:lowerbound3}. In the first instance we use
equation~\eqref{eq.escaperate3} to prove the following result
analogous to the conclusion of Corollary~\ref{cor.lem:Ml}.

\begin{proposition}\label{prop.ei-conclusion}
  Under the assumptions of Theorem~\ref{thm.clustering1} we have
  \begin{equation*} 
    \mu ( M_n < u_n ) = \exp (-n\theta \mu ( \phi \geq u_n )
    ) + O (n^{-\gamma'})
  \end{equation*}
  for some $\gamma' \in (0,1)$.
\end{proposition}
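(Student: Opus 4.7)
The plan is to start from equation~\eqref{eq.escaperate3} and show that, under appropriate choices of the free parameters $t_n$ and $k_n$, each of the six error terms on its right-hand side is uniformly $O(n^{-\gamma'})$ for some $\gamma'\in(0,1)$. Concretely, I will take $t_n=c\log n$ for a large constant $c>0$ and $k_n=n^{\alpha}$ for an exponent $\alpha$ to be optimized. Writing $\mu(U_n)=\mu(B(\tilde{x},r_n))=O(n^{-\sigma})$ and $\mu(A^{(q)}_n)=\theta_n\mu(U_n)=O(n^{-\sigma})$ (since $\theta_n$ is bounded by (A4)), four of the six terms are immediate: $q\mu(U_n\setminus A^{(q)}_n)\leq q\mu(U_n)=O(n^{-\sigma})$ since $q$ is fixed; $|\theta_n-\theta|=O(n^{-\hat\sigma})$ by (A4); the product $k_nt_n\mu(U_n)=O(n^{\alpha-\sigma}\log n)$, small as long as $\alpha<\sigma$; and $n^2\mu(A^{(q)}_n)^2/k_n=O(n^{2-2\sigma-\alpha})$, small as long as $\alpha>2-2\sigma$.

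The two substantial terms, $n\gamma_1(q,n,t_n)$ and $n\gamma_2(q,n,k_n)$, use (A1) and (A2) respectively. For $\gamma_1$, since $A^{(q)}_n=U_n\cap\bigcap_{k=1}^{q}f^{-k}(U_n^{\complement})$ is built from the ball $U_n$ and a \emph{fixed} number $q$ of preimage complements, and since preimages of intervals under the interval map $f$ split into only a bounded number of components per iterate, the indicator $\mathbbm{1}_{A^{(q)}_n}$ has $\mathrm{BV}$-norm bounded by a constant $C_q$ independent of $n$; pairing this BV function against the $L^{\infty}$ function $\mathbbm{1}_{\mathcal{W}_{0,\ell}(A^{(q)}_n)}$, assumption (A1) yields $\gamma_1(q,n,t_n)\leq C_q e^{-\tau t_n}$, hence $n\gamma_1=O(n^{1-\tau c})=O(n^{-\gamma'})$ once $c>(1+\gamma')/\tau$. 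For $\gamma_2$, I exploit $A^{(q)}_n\subset U_n$ to get $\gamma_2(q,n,k_n)\leq\sum_{j=q+1}^{n/k_n}\mu(U_n\cap f^{-j}U_n)\leq\Xi_{p_n,n}(r_n)$, valid provided $n/k_n\leq p_n=n^s$, i.e., $\alpha\geq 1-s$; then (A2) gives $n\gamma_2=O(n^{-\gamma})$.

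It remains to verify that the three constraints $\alpha<\sigma$, $\alpha>2-2\sigma$, and $\alpha\geq 1-s$ are jointly satisfiable. The lower-bound inequalities in \eqref{eq.short-constants} give $\sigma>1-s/2>1-s$ and $\sigma>(2+s)/3>2/3$, so $\max\{1-s,\,2-2\sigma\}<\sigma$, and any $\alpha$ in the nonempty interval $(\max\{1-s,\,2-2\sigma\},\,\sigma)$ works; with such $\alpha$ and $c$ chosen large, the final exponent $\gamma':=\min\{\sigma-\alpha,\,\hat\sigma,\,\gamma,\,2\sigma+\alpha-2,\,\tau c-1\}$ is strictly positive and can be taken in $(0,1)$, completing the proof. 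The main technical step is the $\mathrm{BV}$-norm bound on $\mathbbm{1}_{A^{(q)}_n}$, which relies on the interval-map structure and the fact that $q$ is fixed; the rest is careful bookkeeping of the parameter inequalities imposed by (A2) via \eqref{eq.short-constants} together with the rate supplied by (A4).
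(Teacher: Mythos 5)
Your plan correctly identifies equation~\eqref{eq.escaperate3} as the starting point and correctly dispatches the trivial error terms and $\gamma_1$. The decisive error is in your treatment of $\gamma_2(q,n,k_n)$: you bound it by $\Xi_{p_n,n}(r_n)$ via the inclusion $A^{(q)}_n\subset U_n$ and then invoke (A2). But (A2) is \emph{not} among the hypotheses of Theorem~\ref{thm.clustering1}; the entire premise of Section~\ref{sec.clustering} is that (A2) \emph{fails} when $\tilde{x}$ is a periodic point. Indeed, since $f^q(\tilde{x})=\tilde{x}$ and $|(f^q)'(\tilde{x})|<\infty$, one has
$\mu(B(\tilde{x},r_n)\cap f^{-q}B(\tilde{x},r_n))\gtrsim\mu(B(\tilde{x},r_n))$, and similarly at multiples of $q$, so $\Xi_{p_n,n}(r_n)$ is genuinely of order $n^{-\sigma}$ rather than $O(n^{-1-\gamma})$. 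Passing from $A^{(q)}_n$ to $U_n$ therefore discards exactly the cancellation that makes the proposition true, and replaces $\gamma_2$ with a quantity that is both not small and not controlled by any available hypothesis.

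The paper's proof instead keeps the fine structure of $A^{(q)}_n$. The sum defining $\gamma_2$ is split at a threshold $g(n)=\kappa\log n$: for $q+1\le j\le g(n)$, a geometric lemma about escape from a neighbourhood of the hyperbolic periodic orbit gives a time $R_n\ge c_0\log n$ with $f^j(A^{(q)}_n)\cap A^{(q)}_n=\emptyset$ for all $j\le R_n$, so with $\kappa<c_0$ these short-lag terms vanish entirely; for $j>g(n)$, the terms are handled by exponential decay of correlations for $\mathrm{BV}$ against $L^{p'}$ (Proposition~\ref{prop.decay-bvlp}), not by (A2). That combination of the gap-in-return-times lemma with the $\mathrm{BV}$ versus $L^{p'}$ decay is precisely the mechanism (A2) would have supplied at a $\mu$-generic point, and it is what your proposal is missing. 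Your parameter bookkeeping ($t_n=c\log n$, $k_n=n^\alpha$, the constraints on $\alpha$) is otherwise sound and would mesh with the correct handling of $\gamma_2$, but as written the proof does not go through under the stated hypotheses.
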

\begin{proof}
  To prove the proposition we estimate each term on the right hand
  side of equation~\eqref{eq.escaperate3}. The term which requires
  careful analysis is the one involving $\gamma_2(q,n,k_n)$. First of
  all from (A4), we have $|\theta-\theta_n|=O(n^{-\hat\sigma})$. We
  take $t_n=c\log n$ for some $c\gg-1/\log\tau$ and apply exponential
  decay of correlations via (A1) to get a polynomial decay in $n$ for
  $n\gamma_1 (q,n,t_n)$ within equation~\eqref{eq.escaperate3}. For
  the sequence $u_n$, we can restrict to the case
  $\mu(A^{(q)}_n)=O((\log n)^{\rho}/n)$ for some $\rho>0$. The reasons
  are similar to the choice of $u_n$ made in
  equation~\eqref{eq.mn-errorsplit}. Since we assume that $\theta\neq 0$
  (again from (A4)) the term $q\mu(U_n\setminus A^{(q)}_n)$ also
  decays to zero at the same rate $O((\log n)^{\rho}/n)$. Similarly
  the term $\frac{n^2\mu(A^{(q)}_n)^2}{k_n}=O(n^{-\gamma_1})$ for some
  $\gamma_1>0$. Thus we are left to estimate the remaining term
  $\gamma_2(q,n,k_n)$.

  By inspecting equation \eqref{eq.escaperate3} it suffices to
  show existence of $B>0$ such that
  \begin{equation}\label{eq.ei.blocksplit1}
    n\sum_{j=q+1}^{n/k_n} \mu(A^{(q)}_n \cap f^{-j}
    (A^{(q)}_n))=O(n^{-B}).
  \end{equation}
  The constant $k_n$ plays the same role as the blocking number $p$
  used in the proof of Theorem~\ref{the:lowerbound1}.  We take
  $k_n=\sqrt{n}$, but other rates can be chosen. We split
  \eqref{eq.ei.blocksplit1} into the following two sums:
  \begin{multline}\label{eq.ei.blocksplit2}
    n\sum_{j=q+1}^{n/k_n} \mu(A^{(q)}_n \cap f^{-j}
    (A^{(q)}_n))=\\ n\sum_{j=q+1}^{g(n)} \mu(A^{(q)}_n \cap
    f^{-j} (A^{(q)}_n))+n\sum_{j=g(n)}^{n/k_n} \mu(A^{(q)}_n \cap
    f^{-j} (A^{(q)}_n)),
  \end{multline}
  and take $g(n)=\kappa\log n$ for some
  $\kappa>0$ to be determined. The following results will be useful.
  
  \begin{lemma}
    Suppose that $\tilde{x}$ is a hyperbolic periodic point, and
    $|(f^q)'(\tilde{x})|\in(1,\infty)$. Then there is a time $R_n\geq
    c_0\log n$ with $f^j(A^{(q)}_n)\cap A^{(q)}_n=\emptyset$ for all
    $j\leq R_n$.
  \end{lemma}
  This is an elementary calculation based on estimating the time taken
  for orbits to escape from a fixed neighbourhood of the (hyperbolic)
  periodic orbit, see \cite{FFT2}. The constant $c_0$ depends on
  $(f^q)'(\tilde{x})$, and on the size of the neighbourhood around
  $\tilde{x}$ for which $f^q$ is a diffeomorphism.

  Hence if we choose $g(n)=\kappa\log n$, with $\kappa<c_0$, then the
  first term on the right hand side of \eqref{eq.ei.blocksplit2} is
  zero.  To deal with the second term for this choice of $g(n)$, we
  use decay of correlations (A1) and Proposition~\ref{prop.decay-bvlp}
  (see Section~\ref{sec.appendix2}). Taking $k_n=\sqrt{n}$, we obtain
  that this is bounded by
  \begin{align*} 
    \sum_{j=g(n)}^{\sqrt{n}} \mu(A^{(q)}_n \cap f^{-j}
    (A^{(q)}_n)) &\leq \sqrt{n}\mu(U_n)^2 + \|1_{U_n}\|_{L^{p'}}
    \|1_{U_n}\|_{\mathrm{BV}}\sum_{j=g(n)}^{\sqrt{n}}
    \Theta(j),\\ &\leq c_1 \sqrt{n} \biggl(\frac{(\log
      n)^{2\rho}}{n^2}\biggr) + c_2\frac{(\log
      n)^{\rho/p'}}{n^{1/p'}}\cdot \frac{1}{n^{\kappa_1}},
  \end{align*}
  with $\kappa_1=\kappa\tau.$ Hence, by choosing $p'$
  sufficiently close to 1, equation~\eqref{eq.ei.blocksplit1} holds
  for some $B>0$. Therefore the conclusion of
  Proposition~\ref{prop.ei-conclusion} holds.
\end{proof}

To complete the proof of Theorem~\ref{thm.clustering1} we now follow
the proof of Theorem~\ref{the:lowerbound1} step by step, as detailed
in Sections~\ref{sec.pf.main-case1} and
\ref{sec.pf.main-case2}. Similarly, in the case of proving
Theorem~\ref{thm.hyp1} we follow the proof of
Theorem~\ref{the:lowerbound3}. The inclusion of the parameter $\theta$
in the distribution for $\mu(M_n\leq u_n)$ causes no further technical
obstacles in applying these methods of proof.

\subsection{Proof of
  Proposition~\ref{prop.lsv-ei}}\label{sec.pf.prop.lsv-ei}

To prove Proposition~\ref{prop.lsv-ei}, we begin with a local analysis
of the dynamics near the neutral fixed point $\tilde{x}=0$ to estimate
$\mu(A^{(q)}_n)$, and then use the identities given in equations
\eqref{eq.escaperate} and \eqref{eq.escaperate2}.

An estimate for $\mu(A^{(q)}_n)$ is given in \cite{FFTV}, and we
repeat the main steps here for completeness. Take $q=1$, and consider
the ball $B(0,r_n)$, with $v_n=\psi(r_n)$. The set $A^{(1)}_n$ is
precisely the set $[\hat{r}_n,r_n]$, with $f(\hat{r}_n)=r_n$. Using
the fact that the density $\rho(x)$ takes the form
\[
\rho(x)=\frac{H(x)}{x^{a}},\quad\text{with}\quad H\in L^{1+\epsilon},
\]
we obtain
\[
\mu(A^{(1)}_n)\sim C_{1}(r^{1-a}_n-\hat{r}^{1-a}_n).
\]
Using the fact that $r_n=\hat{r}_n+2^{a}\hat{r}^{1+a}_n$, an
asymptotic analysis yields
\[
\mu(A^{(1)}_n)\sim C_{2}r_n\sim C_{3}\mu(X_1>v_n)^{\frac{1}{1-a}}.
\]
The constants $C_i$ are generic constants that depend on $\rho$
through $H$.  Hence using equations \eqref{eq.escaperate} and
\eqref{eq.escaperate2}, we obtain
\[
\mu(M_n\leq
v_n)=\exp\{-C'n\mu(X_1>v_n)^{\frac{1}{1-a}}\}+O(n^{-\sigma'}),
\]
with $C'>0$ and $\sigma'>0$.  Using a Cauchy-condensation argument as
in the proof of Theorem~\ref{the:lowerbound3}, we go along the
sequence $n_k=b^k$ for $b>1$. This leads to $\mu(M_{n_k}\leq
v_{n_k}\,\textrm{i.o.})=0$ in the case where
\[
C'\mu(X_1>v_n)^{\frac{1}{1-a}}>\frac{c\log\log n}{n},
\]
for any $c>1$. The conclusion of the proof of
Proposition~\ref{prop.lsv-ei} follows by taking $c>(C')^{-1}$.

\section{Proof of Theorem~\ref{thm.hyp2}}\label{sec.pf.thm.hyp2}

The idea is to use the approach of
Theorem~\ref{the:lowerbound3}. The main step is to by-pass the
influence of the set $\mathcal{M}_r$. As in the proof of
Theorem~\ref{the:lowerbound3} we consider the sequence $(r_n)$ such that $\mu
(X_1>\psi(r_n) ) = (c\theta^{-1}\log\log n)/n$ for some
$c>1$. By the local dimension estimate at $\tilde{x}$, we have
for all $\epsilon>0$, and $r<r_0(\tilde{x})$, 
\[
n^{-1/d_{\mu}-\epsilon}<r_n<n^{-1/d_{\mu}+\epsilon}
\]
holds for all large enough $n$.  We now go along the subsequence
$(r_{n_k})$ with $n_k=a^k$, and any $a>1$.  This leads to
$\mu(\mathcal{M}_{r_{n_k}})<Ck^{-\sigma_1}$ which is summable, and
hence $\mu(\limsup_{k}\mathcal{M}_{r_{n_k}})=0$. Thus for
$\mu$-a.e.\ $\tilde{x}\in\mathcal{X}$ equation
\eqref{eq.mn-error2} applies along the subsequence $r_{n_k}$ for
all $k\geq k_0(\tilde{x})$. The error term is $O(k^{-\sigma_2})$,
which is again summable. To complete the proof, we follow the
same approach of proving Theorem~\ref{the:lowerbound3}, except
here the error term is $O((\log n)^{-\sigma_2})$ rather than
$n^{\gamma}$. However, along the sequence $n_k=a^k$, the error
term remains summable. Again, this leads to
\[
M_n(x) \geq
v_{n/a},\;\textrm{with}\;\mu(X_1>v_{n/a})=c\theta^{-1}a\log(\log
(n/a))/n,
\]
where both $c$ and $a$ can be made arbitrarily close to 1. Let
$\tilde{v}_n=v_{n/a}$. Then for any $c'>1$ we have
$\mu(X_1>\tilde{v}_n)>c'\theta^{-1}\log\log n/n$, and
$\mu(M_n\geq \tilde{v}_n \text{ ev.})=1$, as required.

For the cases
$\sigma_1<1$ or $\sigma_2<1$, then we must go along a faster
growing subsequence $n_k=e^{k^{\gamma}}$ with $\gamma>1$. This is to ensure that
the First Borel--Cantelli Lemma can be applied in the proof
above. In particular, we must choose $\gamma>1$ so that $\gamma\sigma_1>1$ and
$\gamma\sigma_2>1$. However in the window $n\in[n_k,n_{k+1}]$,
the value $n$ is not uniformly comparable to $n_k$. In particular
we have
\[
n_{k+1}/n_{k}=e^{(k+1)^{\gamma}-k^{\gamma}}\leq
e^{ck^{\gamma-1}},
\]
where $c$ depends on $\gamma$. This gives
\[
n_{k} \leq n_{k+1}\exp\{-c (\log
n_{k})^{\frac{\gamma-1}{\gamma}}\} \leq n\exp\{-c(\log
n)^{\frac{\gamma-1}{\gamma}}\},
\]
and leads to the bound $\mu ( X>v_n )
\geq e^{(\log n)^{\gamma'}}n^{-1}$, valid for
$\gamma'>(\gamma-1)/\gamma$.

\section{On Condition (A2) and its verification for selected dynamical systems}\label{sec.A2check}

Our main arguments used to prove condition (A2) go back to Collet
\cite[Corollary~2.4 and Lemma~2.5]{Collet}, where similar estimates
are proved for some non-uniformly hyperbolic maps of an interval,
including quadratic maps for Benedicks--Carleson parameters. These
arguments have also been carried out for other types of systems by
Gupta, Holland and Nicol \cite[Section~4]{GHN}, for instance for Lozi
and Lorenz maps.

The argument starts by first estimating the measure of the set
\[
\{\, x : d(x,f^j x) < r_n \text{ for some } j \leq g(n) \,\},
\]
for a suitable function $g(n)$, such as $g(n)=(\log n)^{\gamma}$ for
$\gamma> 1$.  The choice of $g(n)$ is chosen to grow fast enough to
combat decay of correlations, i.e.\ so that $\Theta(g(n))\to 0$
sufficiently fast. One then obtains localised estimates using the
Hardy--Littlewood maximal inequality. The result is that for many
systems, including quadratic maps for Benedicks--Carleson parameters
and Lorenz maps, condition (A2) holds for $\mu$ a.e.\ point
$\tilde{x}$ when $\mu$ is a measure which is absolutely continuous
with respect to Lebesgue measure. Relative to the aforementioned
literature, a technical aspect in our case is that we need to assume a
wider class of sequences $r_n$ to check (A2), in particular allowing
for $\mu(B(\tilde{x},r_n))\approx n^{-\sigma}$ for $\sigma<1$. In the
usual extreme value theory literature, the sequences $r_n$ are chosen
so that $n\mu(B(\tilde{x},r_n))\to \ell\in(0,\infty)$, such as in equation \eqref{eq.un-seq} (see
\cite{Letal}).

To verify (A2), we give the argument in detail for the following systems:
piecewise differentiable maps satisfying assumptions on decay of
correlations, and piecewise expanding maps with an invariant measure
which is absolutely continuous with respect to Lebesgues measure. We
also explain how (A2) is obtained for quadratic maps with
Benedicks--Carleson parameters, contrasting to the methods given by Collet
\cite{Collet}.

\subsection{Condition (A2) for piecewise differentiable maps}\label{sec.piece-diff}

We consider an interval map $f \colon \XX\to\XX$ preserving an ergodic
measure $\mu$ which is piecewise differentiable.  That is, we assume
that the derivative of $f$ is uniformly bounded, so that there is a
constant $\Lambda_+\in\mathbb{R}$ with $|f'(x)|<\Lambda_+$ for all
$x\in\XX$. We allow for $f$ to have a finite number of
discontinuities, and we let $\mathcal{S}$ denote the finite set of
discontinuity points.  We also assume the following regularity
condition on the measure $\mu$:  there exist 
$c_1$, $c_2>0$ and $s_1>s_2>0$ such that for $\mu$-a.e.\ $x\in\XX$,
there exists $r_0>0$ such that for all $r<r_0$,
\begin{equation}\label{eq:measure-reg}
  c_1r^{s_1}\leq \mu(B(x,r))\leq c_2 r^{s_2}. 
\end{equation}
Furthermore, we assume that the upper bound holds for all $x\in\XX$ so
that $r_0$ is independent of $x$ relative to the constants
$c_2,s_2$. Examples include beta-transformations, $x\mapsto \beta
x\mod 1$ ($\beta>1$), and the quadratic map for Benedicks--Carleson
parameters. For these maps it is known that the density of $\mu$ is in
$L^p$ for $p<2$,~\cite{Young_quadratic}. Hence the upper bound of
\eqref{eq:measure-reg} holds for some $s_2,c_2,r_0>0$ and all
$x\in\XX$, $r<r_0$.  We have the following proposition.

\begin{proposition}\label{prop.piece-diff}
  Suppose that $f \colon \XX\to\XX$ is a piecewise differentiable
  interval map, preserving an ergodic measure $\mu$. Suppose that (A1)
  holds, and $\mu$ satisfies equation~\eqref{eq:measure-reg}.  Then
  for $\mu$-a.e.\ $\tilde{x}\in\XX$ condition (A2) holds. That is, for
  $\mu$-a.e.\ $\tilde{x}\in\XX$, there exists $\gamma,s$ and
  $\sigma>0$ such that equations \eqref{eq.short-constants} and
  \eqref{eq.short1} hold for all sequences $r_n$ with
  $\mu(B(\tilde{x},r_n))=O(n^{-\sigma})$.
\end{proposition}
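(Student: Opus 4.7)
The strategy follows Collet \cite{Collet} (see also \cite{GHN}). The sum defining $\Xi_{p_n,n}(r_n)$ is split at a cutoff $g(n) = (\log n)^{\kappa}$ for some $\kappa > 1$, separating short returns $1 \leq j \leq g(n)$ from long returns $g(n) < j \leq p_n = n^s$, and the two regimes are treated by different techniques.

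\textbf{Long returns.} For $j > g(n)$, apply assumption (A1) to $\varphi_1 = \varphi_2 = \mathbbm{1}_{B(\tilde{x},r_n)}$. Since the indicator of an interval has $\mathrm{BV}$-norm bounded by an absolute constant and $L^\infty$-norm equal to $1$, one obtains
\[
\mu\bigl(B(\tilde{x},r_n) \cap f^{-j} B(\tilde{x},r_n)\bigr) \leq \mu(B(\tilde{x},r_n))^2 + C e^{-\tau j}.
\]
Summing over $g(n) < j \leq p_n$, the exponential tail is super-polynomially small since $e^{-\tau (\log n)^\kappa}$ dominates any power of $n$, while the squared-measure contribution is at most $p_n \mu(B(\tilde{x},r_n))^2 = O(n^{s - 2\sigma})$. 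This is $O(n^{-1-\gamma})$ exactly when $\sigma > (1 + \gamma + s)/2$, which is one of the constraints in \eqref{eq.short-constants}.

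\textbf{Short returns.} Introduce the self-intersection set
\[
E_{j,n} := \{\, x \in \XX : \dist(x, f^j(x)) < 2 r_n \,\}.
\]
If $B(\tilde{x}, r_n) \cap f^{-j}B(\tilde{x}, r_n)$ is non-empty then $\tilde{x}$ lies within distance $r_n$ of $E_{j,n}$, so by Fubini and the upper bound in \eqref{eq:measure-reg},
\[
\int_\XX \mu\bigl(B(\tilde{x},r_n) \cap f^{-j} B(\tilde{x},r_n)\bigr) \, \mathrm{d}\mu(\tilde{x}) \leq c_2 r_n^{s_2} \mu(E_{j,n}).
\]
To estimate $\mu(E_{j,n})$, decompose $\XX$ into the at most $K_f^j$ maximal intervals of monotonicity of $f^j$; on each of these the function $x \mapsto x - f^j(x)$ is piecewise $C^1$ with slope controlled in terms of $\Lambda_+$, so $\{\,|x - f^j x| < 2 r_n\,\}$ has Lebesgue measure at most $Cr_n$ per branch. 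Combining with \eqref{eq:measure-reg} yields $\mu(E_{j,n}) \leq C^j r_n^{s_2}$ for a constant $C$ depending only on $f$.

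\textbf{Passage to a.e.\ $\tilde{x}$.} Along the geometric subsequence $n_k = 2^k$, apply Markov's inequality to the short-time piece:
\[
\mu\Bigl(\bigl\{\, \tilde{x} : \Xi^{\mathrm{short}}_{p_{n_k},n_k}(\tilde{x}) > n_k^{-1-\gamma} \,\bigr\}\Bigr) \leq n_k^{1+\gamma} \cdot c_2 r_{n_k}^{s_2} \sum_{j=1}^{g(n_k)} \mu(E_{j,n_k}).
\]
The total factor $g(n_k) C^{g(n_k)}$ is sub-polynomial in $n_k$ because $g(n_k) = (\log n_k)^\kappa$, while $r_{n_k}^{s_2}$ provides an arbitrary negative power of $n_k$ once $\sigma$ is large enough (this is where the remaining inequalities in \eqref{eq.short-constants} enter). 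Hence the upper bound is summable in $k$, and the first Borel--Cantelli lemma gives the short-time estimate for $\mu$-a.e.\ $\tilde{x}$ along $(n_k)$. A standard interpolation argument between consecutive terms of the geometric subsequence, using that both $p_n = n^s$ and $\mu(B(\tilde{x}, r_n))$ change by a bounded factor on $[n_k, n_{k+1}]$, extends the bound to all $n$.

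\textbf{Main obstacle.} The central tension lies in choosing the cutoff $g(n)$. It must grow fast enough that $e^{-\tau g(n)}$ beats every polynomial (forcing $g(n) \gg \log n$), but slowly enough that the short-return bound $C^{g(n)} r_n^{s_2}$ remains sub-polynomial, and this balance is the source of the constraints collected in \eqref{eq.short-constants}. Obtaining a short-return estimate whose branching factor $C^j$ is genuinely compensated by the volume gain $r_n^{s_2}$ after integration in $\tilde{x}$ -- and doing so under the weak hypothesis that only an upper bound on $|f'|$ is assumed -- is the technically most delicate step.
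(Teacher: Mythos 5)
Your outline is essentially Collet's argument (and the one the paper uses for \emph{piecewise expanding} maps in Proposition~\ref{prop:A2forpwexp}), but it does not apply under the hypotheses of Proposition~\ref{prop.piece-diff}, and there are two concrete failures.

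First, the per-branch estimate for $\mu(E_{j,n})$ is false without expansion. You claim that on each monotone branch of $f^j$ the set $\{\, x : |x - f^j x| < 2 r_n \,\}$ has Lebesgue measure $O(r_n)$ because the slope of $x \mapsto x - f^j(x)$ is ``controlled in terms of $\Lambda_+$''. But $\Lambda_+$ is an \emph{upper} bound for $|f'|$, hence gives an upper bound on $|1-(f^j)'|$; to conclude the sublevel set is short you need a \emph{lower} bound on $|1-(f^j)'|$, i.e.\ some form of expansion. Under the standing hypotheses of this proposition the map is merely piecewise differentiable with bounded derivative, and $(f^j)'$ may equal $1$ on a whole branch, in which case $\{\,|x - f^j x| < 2 r_n\,\}$ is the entire branch and the estimate $\mu(E_{j,n})\leq C^j r_n^{s_2}$ fails.

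Second, even granting that estimate, the cutoff $g(n)=(\log n)^\kappa$ with $\kappa>1$ is self-defeating: with $C>1$ the factor $C^{g(n)}=\exp\bigl((\log C)(\log n)^\kappa\bigr)$ is \emph{super}-polynomial, so your claim that $g(n_k)C^{g(n_k)}$ is sub-polynomial is incorrect, and the Markov/Borel--Cantelli bound is not summable. To keep $C^{g(n)}$ polynomial you are forced to $g(n)\asymp\log n$, but then $e^{-\tau g(n)}$ is only a fixed power of $n$, which is exactly why the paper needs the $\mathrm{BV}$-versus-$L^{p}$ decay of Proposition~\ref{prop.decay-bvlp} to gain the extra factor $\mu(B(\tilde{x},r_n))^{1/p'}$ in the long-return sum. (Your interpolation step would also require the regularity condition \eqref{eq:r-assumption} on $r_n$, which is assumed in Proposition~\ref{prop:A2forpwexp} but deliberately \emph{not} in Proposition~\ref{prop.piece-diff}.)

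The paper's actual route is different in kind, not just in constants. It takes $g(n)=\kappa\log n$ with $\kappa$ small and shows the short-return sum is \emph{identically zero} for a.e.\ $\tilde{x}$ and $n$ large: by Lemma~\ref{Lemma:A:BV-Linf} and Borel--Cantelli, almost every $\tilde{x}$ satisfies $d(f^j\tilde{x},\tilde{x})>2^{-j}$ and $d(f^j\tilde{x},\mathcal{S})>2^{-j}$ eventually; then the \emph{upper} bound $\Lambda_+$ on $|f'|$ (and nothing else) is exactly what is needed to propagate this to the whole ball, giving $d(f^j y,\tilde{x})\geq 2^{-j}-2r_n\Lambda_+^j>r_n$ for $y\in B(\tilde{x},r_n)$ and $j\leq\kappa\log n$, so $f^jB(\tilde{x},r_n)\cap B(\tilde{x},r_n)=\emptyset$. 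This is precisely the step that exploits the ``piecewise differentiable'' hypothesis rather than an expansion hypothesis, and it is the idea missing from your proposal.
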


\begin{remark}
  The main conclusion of Proposition~\ref{prop.piece-diff} is that
  (A2) applies to $\mu$-a.e.\ $\tilde{x}\in\XX$. It is possible to
  check (A2) point-wise under knowledge of recurrence properties of
  $\tilde{x}$, such as knowing that $\tilde{x}$ is pre-periodic to a
  hyperbolic fixed point. In these cases it is possible to remove some
  of the global assumptions, such as requiring existence of
  $\Lambda_+<\infty$, or requiring uniformity of the constants
  in~\eqref{eq:measure-reg} to all $x\in\XX$.
\end{remark}

\begin{remark}
  The proof we give is much shorter relative to the (general) methods
  presented in~\cite{Collet}. The main trick is that for piecewise
  differentiable systems it is sufficient to control the recurrence of
  typical points over a time window of order $\log n$. Previous
  methods have taken a longer time window of the order $(\log
  n)^\gamma$ for some $\gamma>1$.
\end{remark}

\begin{proof}
  To prove this result, consider for $p_n=n^s$ and $s>0$ the quantity
  $\Xi_{p_n,n}(r_n)$. Introducing an intermediate growing function
  $g(n)=o(p_n)$ we split up $\Xi_{p_n,n}(r_n)$ into two sums as
  follows:
  \begin{multline}\label{eq.xi-split1}
    \Xi_{p_n,n}(r_n)=
    \sum_{j=1}^{g(n)}\mu(f^{j}B(\tilde{x},r_n)\cap
    B(\tilde{x},r_n))
    \\ +\sum_{j=g(n)+1}^{n^s}\mu(f^{j}B(\tilde{x},r_n)\cap
    B(\tilde{x},r_n)).
  \end{multline}
  For $\sigma>(0,1)$ we assume that $\mu(B(\tilde{x},r_n)) =
  O(n^{-\sigma})$. For $g(n)=\kappa\log n$, the
  first sum on the right-hand side of \eqref{eq.xi-split1} is zero by the following claim.
  
  \begin{claim}
  	There exists $\kappa>0$, such that for all
  	$j\leq \kappa\log n$, we have $f^{j}B(\tilde{x},r_n)\cap
  	B(\tilde{x},r_n)=\emptyset$.
  \end{claim}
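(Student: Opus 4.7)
\emph{Proof plan.} The plan is to combine the piecewise Lipschitz bound with a Borel--Cantelli non-recurrence argument for $\mu$-typical $\tilde{x}$.

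First I establish the pointwise mechanism. Since $|f'|<\Lambda_+$, the iterate $f^j$ is Lipschitz with constant $\Lambda_+^j$ on each of its intervals of continuity. Whenever $B(\tilde{x},r_n)$ is contained in one such interval (that is, no discontinuity of $f^j$ falls within the ball), we have
$$
f^j\bigl(B(\tilde{x},r_n)\bigr) \subset B\bigl(f^j\tilde{x},\; \Lambda_+^j r_n\bigr),
$$
which is disjoint from $B(\tilde{x},r_n)$ whenever $\dist(f^j\tilde{x},\tilde{x}) > (1+\Lambda_+^j)r_n$. From \eqref{eq:measure-reg} and $\mu(B(\tilde{x},r_n))=O(n^{-\sigma})$ we have $r_n \leq C n^{-\sigma/s_1}$, so taking $\kappa < \sigma/(s_1\log\Lambda_+)$ forces $\Lambda_+^j r_n = O(n^{\kappa\log\Lambda_+ - \sigma/s_1}) = o(1)$ uniformly in $j\leq\kappa\log n$.

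Next, for $\mu$-a.e.\ $\tilde{x}$ I would secure, via Borel--Cantelli, two non-recurrence statements that hold eventually in $n$: (i) $\dist(f^j\tilde{x},\tilde{x}) > 3r_n$ for all $j\leq\kappa\log n$, and (ii) $\dist(\tilde{x},f^{-j}\mathcal{S}) > r_n$ for all $j\leq\kappa\log n$, the latter guaranteeing continuity of $f^j$ on $B(\tilde{x},r_n)$. For (i), a partition of $\XX$ into intervals of length $\sim r_n$ combined with (A1) applied to BV indicators (of BV norm at most $2$) yields
$$
\mu\{\,x : \dist(f^j x, x) < 3r_n\,\} \leq C r_n^{s_2} + O\bigl(\Theta(j)/r_n\bigr),
$$
with $\Theta(j) = e^{-\tau j}$; summing over $j \leq \kappa\log n$ and then over $n$ is summable once $\kappa$ is chosen suitably small. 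For (ii), the pre-image count $|f^{-j}\mathcal{S}| \leq |\mathcal{S}|K^j$ (with $K$ the maximal branching number of $f$) gives $\mu\{\,x : \dist(x, f^{-j}\mathcal{S}) < r_n\,\} \leq |\mathcal{S}| K^j c_2 r_n^{s_2}$, and the sum over $j\leq\kappa\log n$ becomes $\lesssim n^{\kappa\log K - \sigma s_2/s_1}$, which is summable in $n$ once $\kappa$ is taken small enough relative to $\sigma$, $s_1$, $s_2$, and $\log K$.

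The main obstacle is the Borel--Cantelli summability in (ii), where the exponential growth $|f^{-j}\mathcal{S}| \leq K^j$ competes against the polynomial smallness of $r_n^{s_2}$; this forces a range restriction of the form $\kappa < \kappa_0(\sigma, s_1, s_2, \log K, \log\Lambda_+, \tau)$ on the allowable $\kappa$. Once $\kappa$ is sufficiently small, Borel--Cantelli together with the pointwise mechanism of the first paragraph yields the claim for $\mu$-a.e.\ $\tilde{x}$.
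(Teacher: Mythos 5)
Your pointwise mechanism --- the Lipschitz bound $\dist(f^j y, f^j \tilde{x}) \leq \Lambda_+^j r_n$ on balls that avoid the discontinuities of $f^j$, plus the separation $\dist(f^j\tilde{x},\tilde{x}) > (1+\Lambda_+^j)r_n$ --- is exactly the one the paper uses. The genuine gap is in the Borel--Cantelli step. You set up an $n$-dependent non-recurrence threshold, trying to prove that eventually in $n$ one has $\dist(f^j\tilde{x},\tilde{x}) > 3r_n$ and $\dist(\tilde{x},f^{-j}\mathcal{S}) > r_n$ for all $j\leq\kappa\log n$, by summing over $j\leq\kappa\log n$ and then over $n$. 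Neither sum converges, for any $\kappa>0$. In (i), even the leading term $Cr_n^{s_2}$, summed over $n$, behaves like $\sum_n n^{-\sigma s_2/s_1}$ (up to a $\log n$ factor); since $s_2 < s_1$ and $\sigma < 1$, the exponent $\sigma s_2/s_1$ is strictly less than $1$, so this diverges independently of $\kappa$. Worse, your correlation error $\Theta(j)/r_n = e^{-\tau j}/r_n$ exceeds $1$ precisely in the short-return regime $j\lesssim(\sigma/s_1)\log n$, so the bound carries no information exactly where you need it. In (ii), the pre-image count $|f^{-j}\mathcal{S}|\leq K^j$ produces $\sum_{j\leq\kappa\log n}K^j r_n^{s_2}\sim n^{\kappa\log K - \sigma s_2/s_1}$, whose exponent exceeds $-1$ for every $\kappa>0$ (again because $\sigma s_2/s_1<1$), so the sum over $n$ diverges no matter how small $\kappa$ is.

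The paper's proof avoids both problems by decoupling the Borel--Cantelli from $n$ via a $j$-dependent dyadic threshold, summed over $j$ alone. One shows $\sum_j\mu\{\dist(f^jx,x)<2^{-j}\}\leq\sum_j(c_2 2^{-j s_2} + D e^{-\eta j})<\infty$ using Lemma~\ref{Lemma:A:BV-Linf}, and for the singular set one tracks \emph{forward} distances and uses invariance: $\sum_j\mu\{\dist(f^jx,\mathcal{S})<2^{-j}\}=\sum_j\mu\{\dist(y,\mathcal{S})<2^{-j}\}\lesssim\sum_j 2^{-j s_2}<\infty$, with no $K^j$ factor at all. This gives a full-measure set of $\tilde{x}$ and a threshold $j_0(\tilde{x})$ such that $\dist(f^j\tilde{x},\tilde{x})>2^{-j}$ and $\dist(f^j\tilde{x},\mathcal{S})>2^{-j}$ for all $j\geq j_0$. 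Once such a $\tilde{x}$ is fixed, the step to the claim is deterministic: for $j\leq\kappa\log n$ one has $2^{-j}\geq n^{-\kappa\log 2}$ while $r_n(1+2\Lambda_+^j)=O(n^{\kappa\log\Lambda_+-\sigma/s_1})$, so the former dominates once $\kappa\log(2\Lambda_+)<\sigma/s_1$. Your plan is missing this two-step structure; it cannot be repaired simply by shrinking $\kappa$. You need the $j$-dependent scale $2^{-j}$ for the Borel--Cantelli, and you need to replace the backward pre-image neighborhoods of $\mathcal{S}$ by forward orbit distances and invariance.
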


  \begin{proof}[Proof of Claim]
  We consider the set of closely returning points $E_{r,n}$
  defined by
  \[
  E_{r,n} = \{\, x : d(f^n (x), x) < r \,\}.
  \] 
  Using Lemma~\ref{Lemma:A:BV-Linf} (see Appendix), condition (A1),
  and the regularity condition~\eqref{eq:measure-reg} we deduce that
  \begin{align*}
    \mu (E_{r,n}) &\leq \int \mu (B(x,r)) \, \mathrm{d} \mu (x) +
    D e^{- \eta n} \\ & \leq c_2 r^{s_2} + D e^{- \eta n},
  \end{align*}
  for some $\eta>0$. Let $r=2^{-j}$, then by the regularity
  condition~\eqref{eq:measure-reg}, and the First Borel--Cantelli
  Lemma we have $\mu(\liminf E^{\complement}_{r_j,j})=1$. Hence for
  $\mu$-almost all $x$, there exists $j_0(x)$ such that
  $\mathrm{dist}(f^j(x),x)>2^{-j}$ for all $j\geq j_0$. Take
  $\tilde{x}$ to be a representative in this full measure set.

We impose a further restriction on the orbit of $\tilde{x}$ as
follows. Let
\[
F_j=\{\, x:\mathrm{dist}(f^j(x),\mathcal{S})<2^{-j} \,\},
\]
where $\mathcal{S}$ denotes the discontinuities of $f$. Then we
take $\tilde{x}\in\liminf F^{\complement}_j$. Again, this set also has
$\mu$-measure 1.

By \eqref{eq:measure-reg} and the assumption $\mu(B(\tilde{x},r_n)) =
O(n^{-\sigma})$, it follows that $r_n = O(n^{-\frac{\sigma}{s_1}})$.
Take $\tilde{x}$ to be in the set of $\mu$-measure 1 as described
above.  We consider a time $R\equiv R(\tilde{x}, r_n)$ such that
\begin{itemize}
\item[(i)] $f^j$ is continuous on $B(\tilde{x},r_n)$ for all $j\leq
  R$.
\item[(ii)] $f^jB(\tilde{x},r_n)\cap B(\tilde{x},r_n)=\emptyset$ for all
  $j\leq R$.
\end{itemize}
We provide a lower bound for $R$ such that the above two assumptions
are satisfied.  First, there is a time $j_0(\tilde{x})$ for which
simultaneously $d(f^j(\tilde{x}),\tilde{x})>2^{-j}$, and
$d(f^j(\tilde{x}),\mathcal{S})>2^{-j}$ hold for all $j\geq j_0$. The set of
such $\tilde{x}$ has $\mu$-measure one. We let $m_0(\tilde{x})$ denote
\[
m_0 (\tilde{x}) = \inf_{y\in
  B(\tilde{x},r_n)}\{\,d(f^j(y),\tilde{x}),\,d(f^j(y),\mathcal{S}) :
j\leq j_0 \,\}.
\]
For all $n$ sufficiently large, we have $m_0(\tilde{x})>r_n$ (perhaps
removing a further countable set of $\tilde{x}$ that meet
$\mathcal{S}$ before time $j_0(\tilde{x})$).

To bound $R$, we claim that there exists $\kappa_1>0$ with
$R>\kappa_1\log n$. Let $y\in B(\tilde{x},r_n)$, then for $j \geq j_0$
\begin{equation}
  \begin{split}
    d(f^j(y),\mathcal{S}) &\geq
    d(f^j(\tilde{x}),\mathcal{S})-d(f^j(y),f^j(\tilde{x}))\\ &\geq
    2^{-j}-2r_n \Lambda_{+}^j,
  \end{split}
\end{equation}
where $\Lambda_{+}$ is the upper bound for $|f'|$. Now for (i) to
hold, we require
\[
2^{-j}-2r_n\Lambda_{+}^j>0,
\]
for all $j\leq R$, otherwise the ball meets the singularity set prior
to time $R$. Hence, if we choose
\[
R < \frac{-\log(r_n)-\log 2}{\log 2 + \log \Lambda_+},
\]
then (i) is satisfied.

To verify item (ii), a similar bound is obtained. Similarly to above,
we have for $y\in B(\tilde{x},r_n)$ that for $j \geq j_0$
\begin{align*}
    d(f^j(y),\tilde{x}) &\geq
    d(f^j(\tilde{x}),\tilde{x})-d(f^j(y),f^j(\tilde{x}))\\ &\geq
    2^{-j}-2r_n\Lambda_{+}^j.
\end{align*}
We require that $2^{-j}-2r_n|\Lambda_{+}|^j > r_n$ for all $j \leq
R$. Hence, if we choose
\[
R < \frac{- \log r_n - \log 3}{\log 2 + \log \Lambda_+},
\]
then (ii) is satisfied.

Take any $\kappa< \frac{\sigma}{2s_1 \log(2\Lambda_+)}$ and let
$g(n)=\kappa\log n$. From the above two requirements together with
$r_n = O(n^{-\frac{\sigma}{s_1}})$, it follows that we may take $R =
\kappa \log n$ for large $n$.

Note that $\kappa$ depends on $\sigma$, but without loss we can
further restrict to $\sigma>1/2$ so that $\sigma$ is bounded away from
zero. It is immediate from the construction above that
$f^jB(\tilde{x},r_n)\cap B(\tilde{x},r_n)=\emptyset$ for all $j\leq
\kappa \log n$. 
\end{proof}

Hence in the estimate for $\Xi_{p_n,n}$, the first
sum on the right of \eqref{eq.xi-split1} is zero. For the second sum,
we use exponential decay of correlations for $\mathrm{BV}$ against
$L^{\infty}(\mu)$ in conjunction with
Proposition~\ref{prop.decay-bvlp}.  This gives
\begin{multline*}
  \sum_{j=g(n)+1}^{n^s}\mu(f^{j}B(\tilde{x},r_n)\cap
  B(\tilde{x},r_n)) \\ \leq
  n^s \mu(B(\tilde{x},r_n))^2+C_1\mu(B(\tilde{x},r_n))^{1/p'}e^{-\tau
    g(n)},
\end{multline*}
with $C_1>0$. By choice of $r_n$ the right is bounded by:
\begin{equation}\label{eq.bound-kappa}
n^{s-2\sigma}+ n^{-\sigma/p'}\cdot n^{-\kappa_2},
\end{equation}
where $\kappa_2$ depends on $\kappa$ and $\tau$.  Hence, there exists
a choice of constants $s,\sigma$ consistent with
\eqref{eq.short-constants} so that~\eqref{eq.bound-kappa} is bounded
by $n^{-1-\gamma}$ for some $\gamma>0$. This completes the proof.
\end{proof}

\subsection{Condition (A2) for piecewise expanding maps} \label{sec:piece-exp}

In this section we consider piecewise expanding maps. Relative to
Section~\ref{sec.piece-diff} we allow for unbounded derivative. This
allows us to cover the Gau\ss{} map. The set up is as follows.  Suppose
that $f \colon [0,1] \to [0,1]$ is a piecewise expanding map, with
finitely many pieces of continuity. There is then a partition
$\mathscr{P} = \{I_1, \ldots, I_m\}$ such that $f$ is differentiable
on each $I_k$. Let $\mathscr{P}_n$ be the corresponding partition for
$f^n$. Since the partition $\mathscr{P}$ is finite, there is a
$\delta_0 > 0$ such that every partition element of $\mathscr{P}$ has
a diameter of at least $\delta_0$. We let $S$ be the set of endpoints
of partition elements of $\mathscr{P}$. The set $S$ is $\delta_0$
separated.

Alternatively, we assume that the partition $\mathscr{P}$ is
countable, in which case we assume that there is a $\delta_0 > 0$
such that for all $n$ holds $|f^n (I)| \geq \delta_0$ whenever $I
\in \mathscr{P}_n$.

We assume that $f$ is uniformly expanding, i.e.\ that there is a
constant $\Lambda$ such that $|f'| \geq \Lambda$.  Moreover, we
assume that $f$ has bounded distortion, and that $\mu$ is an
ergodic measure $\mu$ with exponential decay of correlations for
functions of bounded variation against $L^1$. This means that
there exists a constant $C$ such that
\[
x,y \in I \in \mathscr{P}_n \qquad \Rightarrow \qquad C^{-1} \leq
\frac{D f^n (x)}{D f^n (y)} \leq C
\]
and 
\[
\biggl| \int \phi_1 \cdot \phi_2 \circ f^j \, \mathrm{d} \mu -
\int \phi_1 \, \mathrm{d} \mu \int \phi_2 \, \mathrm{d} \mu
\biggr| \leq C e^{-\tau j} \lVert \phi_1 \rVert_\mathrm{BV}
\lVert \phi_2 \rVert_1
\]
for some $\tau > 0$.

We will prove that for any such piecewise expanding map, the set
of points $\tilde{x}$ which satisfies assumption (A2) has full
measure.  Example of a systems satisfying our assumption are
piecewise expanding maps with finitely many pieces and an
absolutely continuous invariant measure $\mu$; the Gau\ss{} map
with the Gau\ss{} measure; or the first return map to
$[\frac{1}{2}, 1)$ for a Manneville--Pomeau map with an
absolutely continuous invariant measure $\mu$.

\begin{proposition} \label{prop:A2forpwexp}
  Suppose that there is a constant $c$ such that $\mu (I) \leq c |I|$
  for any interval $I$. Then for $\mu$-a.e.\ $\tilde{x} \in\XX$, there
  exists $\gamma,s$ and $\sigma>0$ such that equations
  \eqref{eq.short-constants} and \eqref{eq.short1} hold for all
  sequences $(r_n)$ with $\mu(B(\tilde{x},r_n))=O(n^{-\sigma})$, and
  satisfying the additional assumption: for any $t > 0$,
  \begin{equation} \label{eq:r-assumption}
    \limsup_{k \to \infty} \frac{r_{k^t}}{r_{(k+1)^t}} < \infty.
  \end{equation}
\end{proposition}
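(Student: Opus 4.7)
The plan is to follow the strategy of Proposition~\ref{prop.piece-diff}, but to bypass the unbounded-derivative obstacle by replacing the pointwise ``empty intersection'' argument on the short-time block with an integrated $L^1(\mu)$ bound and Borel--Cantelli along the subsequence $n_k=k^t$ provided by \eqref{eq:r-assumption}. I fix constants $\sigma,\gamma,s>0$ obeying \eqref{eq.short-constants} and split
\[
\Xi_{p_n,n}(r_n)=\sum_{j=1}^{g(n)}\mu(B_n\cap f^{-j}B_n)+\sum_{j=g(n)+1}^{p_n}\mu(B_n\cap f^{-j}B_n),
\]
with $B_n=B(\tilde{x},r_n)$, $p_n=n^s$, and $g(n)=\kappa\log n$ for a constant $\kappa$ to be chosen.

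For the long-time sum, assumption (A1) with $\mathrm{BV}$ versus $L^1$ applied to $\mathbbm{1}_{B_n}$ gives a pointwise-in-$\tilde{x}$ estimate
\[
\mu(B_n\cap f^{-j}B_n)\le \mu(B_n)^2+Ce^{-\tau j}\,\|\mathbbm{1}_{B_n}\|_{\mathrm{BV}}\,\mu(B_n),
\]
so after summing one gets $O(n^{s-2\sigma}+n^{-\sigma}e^{-\tau\kappa\log n})$; enlarging $\kappa$ and invoking the constraints in \eqref{eq.short-constants} renders this $O(n^{-1-\gamma})$ uniformly in $\tilde{x}$. For the short-time sum I combine Fubini with the regularity $\mu(I)\le c|I|$ to obtain
\[
\int\mu(B_n\cap f^{-j}B_n)\,\mathrm{d}\mu(\tilde{x})=\int\mu\bigl(B(x,r_n)\cap B(f^j x,r_n)\bigr)\,\mathrm{d}\mu(x)\le 2cr_n\,\mu(E_{2r_n,j}),
\]
where $E_{r,j}:=\{\,x:d(f^j x,x)<r\,\}$. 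The pivotal lemma is the uniform bound $\mu(E_{r,j})\le C_\star r$ for all $j\ge 1$: on each branch $I\in\mathscr{P}_j$ the function $x\mapsto f^j(x)-x$ is strictly monotone with derivative at least $\Lambda^j-1$, hence $|E_{r,j}\cap I|\le 2r/(\Lambda^j-1)$; bounded distortion combined with $|f^j(I)|\ge\delta_0$ controls $|I|$, and $\mu(I)\le c|I|$ then lets one sum only over those branches hosting a period-$j$ fixed point, absorbing the factor $\Lambda^j/(\Lambda^j-1)$ into $C_\star$.

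Given this lemma the integral of the short-time sum is $O(g(n)r_n^2)=O(\log n\cdot n^{-2\sigma})$, so Markov yields $\mu(\{\text{short-sum}>n^{-1-\gamma}\})=O(\log n\cdot n^{1+\gamma-2\sigma})$. Because \eqref{eq.short-constants} forces $2\sigma-1-\gamma>s>0$, choosing $t>1/(2\sigma-1-\gamma)$ makes this probability summable along $n_k=k^t$, and Borel--Cantelli provides a full-$\mu$-measure set of $\tilde{x}$ for which the bound holds for all large $k$. To pass from $n_k$ to arbitrary $n\in[n_k,n_{k+1}]$, I apply the whole splitting argument instead to $\sum_{j=1}^{p_{n_{k+1}}}\mu(B(\tilde{x},r_{n_k})\cap f^{-j}B(\tilde{x},r_{n_k}))$: monotonicity of the integrand in $r_n$ ensures this dominates $\Xi_{p_n,n}(r_n)$, while $p_{n_{k+1}}/p_{n_k}=((k+1)/k)^{ts}$ is bounded and the regularity \eqref{eq:r-assumption} bounds $r_{n_k}/r_{n_{k+1}}$, so the target $O(n^{-1-\gamma})$ bound survives up to an absolute constant. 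The main technical obstacle will be the uniform estimate $\mu(E_{r,j})\le C_\star r$ in the countable-partition case (Gau\ss{} map, first-return maps to intervals), where the naive sum over $\mathscr{P}_j$ diverges and one must exploit that only branches supporting a periodic point of period $j$ contribute, with the contributions summed geometrically by bounded distortion and the uniform lower bound $|f^j(I)|\ge\delta_0$.
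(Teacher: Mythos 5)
Your overall architecture is a legitimate alternative to the paper's: you replace the paper's Hardy--Littlewood maximal function argument with a Fubini--Markov averaging step, and both use Borel--Cantelli along a polynomial subsequence together with \eqref{eq:r-assumption} to pass to arbitrary $n$. The Fubini identity $\int\mu(B(\tilde{x},r_n)\cap f^{-j}B(\tilde{x},r_n))\,\mathrm{d}\mu(\tilde{x})\le 2cr_n\,\mu(E_{2r_n,j})$ is correct, and the long-time sum via decay of correlations matches the paper.

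The genuine gap is your ``pivotal lemma'' $\mu(E_{r,j})\le C_\star r$, which is strictly stronger than what the paper proves and is not established by your sketch. The paper's Lemma~\ref{lem:Ekmeasure} yields only $\mu(E_{r,j})\le K_0\delta + K_1 r\delta^{-1}$, optimized to $O(\sqrt{r})$, and the reason for the loss is precisely the point your proof sketch elides: in the finite-piece case, the inequality $|f^j(I)|\ge\delta_0$ holds for $I\in\mathscr{P}$ (i.e.\ $j=1$) but \emph{not} for general $I\in\mathscr{P}_j$, $j>1$, because iterating $f$ creates branches whose images are arbitrarily short (those cut by preimages of $S$). On such a branch your bounded-distortion step ``$(f^j)'\gtrsim\delta_0/|I|$'' fails, and you cannot convert the per-branch estimate $|E_{r,j}\cap I|\lesssim r/(\Lambda^j-1)$ into the summable form $\lesssim r|I|$. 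Controlling the total mass of these small-image branches is exactly what the paper's Lemma~\ref{lem:smallimage} does (via decay of correlations applied to neighbourhoods of $S$), producing the additive $K_0\delta$ term that caps you at $O(\sqrt{r})$. You flag a difficulty in the countable-partition case, but the same obstruction already appears in the finite-piece case, and your sketch does not address it there. Moreover, the claim that ``only branches hosting a period-$j$ fixed point contribute'' is also not right: a short branch can intersect $E_{r,j}$ without crossing the diagonal, and these are precisely the branches you have not bounded.

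A secondary caveat: even if you swap in the paper's true bound $\mu(E_{r,j})\le K_2\sqrt{r}$, your Markov step gives $\mu(\{\text{short sum}>n^{-1-\gamma}\})=O(\log n\cdot n^{1+\gamma-3\sigma/2})$, and summability along $n_k=k^t$ then requires $\sigma>\tfrac{2}{3}(1+\gamma)$. This does appear to be compatible with \eqref{eq.short-constants} for $\sigma>4/5$, so the averaging route can likely be made to work, but you would need to carry out this exponent check explicitly rather than relying on the unsupported $O(r)$ bound.
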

\begin{remark}
Requirement of assumption \eqref{eq:r-assumption} is a consequence of
the method of proof. Unlike in the proof of
Proposition~\ref{prop.piece-diff}, we cannot infer that
$f^jB(\tilde{x},r_n)\cap B(\tilde{x},r_n)=\emptyset$ for all $j=O(\log
n)$. However it is possible to check (A2) point-wise if certain
recurrence properties of $\tilde{x}$ are known (such as pre-periodic),
and in turn relax assumption \eqref{eq:r-assumption}.
\end{remark}
To prove Proposition~\ref{prop:A2forpwexp}, we will need two
lemmata. For the first lemma, we let $A_n (\delta) = \{\, I \in
\mathscr{P}_n : |f^n (I)| < \delta \,\}$.

\begin{lemma} \label{lem:smallimage}
  If $\mu$ satisfies $\mu (I) \leq c |I|$ for any interval $I$,
  then there exists a constant $K_0$ such that
  \[
  \mu (\cup A_n (\delta)) \leq K_0 \delta
  \]
  holds for any $\delta < \delta_0$.
\end{lemma}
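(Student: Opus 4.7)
The plan is to reduce the claim to a Lebesgue-measure estimate and then control that estimate via the transfer operator. Since $\mu(I) \leq c|I|$ for any interval, it suffices to prove $\sum_{I \in A_n(\delta)} |I| \leq (K_0/c)\delta$. In the countable-partition case, the standing assumption $|f^n(I)| \geq \delta_0$ for every $I \in \mathscr{P}_n$ immediately forces $A_n(\delta) = \emptyset$ whenever $\delta < \delta_0$, so the lemma is trivial; I therefore focus on the finite-partition setting.

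For $I \in \mathscr{P}_n$, the restriction $f^n|_I$ is a diffeomorphism onto $f^n(I)$, and the change-of-variables formula gives $|I| = \int_{f^n(I)} |(f^n)'(f^n|_I^{-1}(y))|^{-1}\,\mathrm{d}y$. Summing over $I \in A_n(\delta)$ and interchanging sum with integral,
\begin{equation*}
\sum_{I \in A_n(\delta)} |I| = \int_0^1 G_n(y)\,\mathrm{d}y, \qquad G_n(y) = \sum_{\substack{I \in A_n(\delta)\\ y \in f^n(I)}} \frac{1}{|(f^n)'(f^n|_I^{-1}(y))|}.
\end{equation*}
The integrand is bounded pointwise by the full transfer operator applied to the constant function, $G_n \leq \mathscr{L}^n \mathbbm{1}$. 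The standard Lasota--Yorke inequality, available under the bounded-distortion and uniform-expansion hypotheses, gives $\|\mathscr{L}^n \mathbbm{1}\|_\infty \leq M$ uniformly in $n$. Since $G_n$ is supported on the set $E := \bigcup_{I \in A_n(\delta)} f^n(I)$, I obtain $\sum_{I \in A_n(\delta)} |I| \leq M \cdot |E|$.

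The main obstacle is to show $|E| \leq K'\delta$ with $K'$ independent of $n$. The set $E$ is a union of intervals $f^n(I)$ of length less than $\delta$. Each $I \in A_n(\delta)$ is \emph{incomplete} in the sense that $f^n(I) \subsetneq I_{i_{n-1}}$, which forces at least one endpoint of $f^n(I)$ to be a fresh partition point produced by the iteration, i.e.\ of the form $f^{n-k}(s)$ for some $s \in S$ and $1 \leq k \leq n$. To control $|E|$, I would track the generation $k$ at which the incompleteness of $I$ first occurred and classify pieces by this generation. For each generation the sliced sub-interval of the ambient $\mathscr{P}_{n-k}$-piece is then iterated for $n-k$ further steps; by uniform expansion and bounded distortion the corresponding contribution to $|E|$ is at most a constant multiple of $\delta \Lambda^{-(n-k)}$, and summing as a geometric series in $\Lambda^{-1}$ yields the uniform bound $|E| \leq K'\delta$. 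The constant $K_0$ in the lemma can then be taken as $cMK'$.
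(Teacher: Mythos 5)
Your reduction $\mu(\cup A_n(\delta)) \le c\sum_{I\in A_n(\delta)}|I| = c\int G_n\,\mathrm{d}y \le cM|E|$, with $G_n = \mathscr{L}^n\mathbbm{1}_{\cup A_n(\delta)} \le \mathscr{L}^n\mathbbm{1}$ bounded in $L^\infty$ by Lasota--Yorke, is sound. But the argument stops being a proof exactly where you need $|E|\le K'\delta$ uniformly in $n$, and the heuristic you offer there does not hold up. You classify the small pieces $I$ by the first time $k$ at which an endpoint of $I$ hits $S$ and assert that generation $k$ contributes $O(\delta\Lambda^{-(n-k)})$ to $|E|$. That exponential factor is spurious: for each $k$ and each $s\in S$ there is at most one $\mathscr{P}_{n-k}$-element $J$ on each side of $s$, and its contribution to $E$ is the single interval $f^{n-k}(J)=f^n(I)$ whose length is anything up to $\delta$, not $\delta\Lambda^{-(n-k)}$. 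So each generation can contribute a full $\sim\delta$, and summing over $k=1,\dots,n$ gives only $|E|=O(n\delta)$. Geometrically, every $f^n(I)$ with $I\in A_n(\delta)$ has an endpoint in the forward orbit $\{f^i(s): s\in S,\ 1\le i\le n\}$ of the singularity set, so $E$ sits inside the $\delta$-neighbourhood of that orbit; for a recurrent orbit this set genuinely has Lebesgue measure of order $n\delta$, and there is no purely expansion/combinatorial reason for $|E|$ to be $O(\delta)$ independently of $n$.

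The paper avoids image space entirely. It observes that $|f^n(I)|<\delta<\delta_0$ forces the existence of \emph{two distinct} times $j<k<n$ at which the orbit is close to $S$ (if both endpoints of $I$ first meet $S$ at the same time $j$, then $f^j(I)$ is a full $\mathscr{P}$-element of length $\ge\delta_0$, so $|f^n(I)|\ge\Lambda^{n-j}\delta_0\ge\delta_0$, a contradiction), with quantitative bounds $d(f^jx,S)<\delta\Lambda^{-n+j}$ and $d(f^kx,S)<\delta\Lambda^{-n+k}$. It then covers $\cup A_n(\delta)$ by $\bigcup_{j<k}\bigl(f^{-j}S_{(\delta\Lambda^{-n+j})}\cap f^{-k}S_{(\delta\Lambda^{-n+k})}\bigr)$ and uses exponential decay of correlations (BV against $L^1$) to decouple each pair of events, making the double sum over $j<k$ summable to $O(\delta)$. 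It is precisely the mixing hypothesis that controls how often a typical orbit can be near $S$ at two well-separated times, and this probabilistic input is what your proposed bound on $|E|$ would secretly require; a deterministic geometric series argument cannot supply it.
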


\begin{proof}
  We only have to consider the case when $f$ is piecewise
  expanding with finitely many pieces, since in the case with
  countably many pieces, out assumptions imply that $\mu (\cup
  A_n (\delta)) = 0$ whenever $\delta < \delta_0$.
  
  Let $I_n (x)$ denote the partition element of $\mathscr{P}_n$
  which $x$ belongs to. If $|I_n(x)| < \delta < \delta_0$ then
  there are $j,k < n$ such that $j \neq k$ and both $f^j (x)$ and
  $f^k (x)$ are close to $S$. More precisely, we must have
  \[
  d (f^j (x), S) < \delta \Lambda^{-n+j} \qquad \text{and} \qquad
  d (f^k (x), S) < \delta \Lambda^{-n+k},
  \]
  since otherwise, $S$ would not have ``cut'' the partition
  element $I_n (x)$ in a way such that $|I_n (x)| < \delta$. We
  therefore have $\cup A_n (\delta) \subset B_n (\delta)$, where
  $B_n (\delta)$ is defined by
  \[
  B_n (\delta) = \bigcup_{0 \leq j < k < n} \bigl( f^{-j} S_{(
    \delta \Lambda^{-n+j})} \cap f^{-k} S_{(\delta
    \Lambda^{-n+k})} \bigr),
  \]
  and $S_{(\varepsilon)} = \{\, t \in [0,1] : d(t,S) <
  \varepsilon \,\}$. We shall estimate the measure of $B_n
  (\delta)$.

  By decay of correlations, we have for $j < k$ that
  \begin{align*}
    \mu &\bigl( f^{-j} S_{( \delta \Lambda^{-n+j})} \cap f^{-k}
    S_{(\delta \Lambda^{-n+k})} \bigr) = \mu \bigl( S_{( \delta
      \Lambda^{-n+j})} \cap f^{-(k-j)} S_{(\delta \Lambda^{-n+k})}
    \bigr) \\ &\leq \mu ( S_{( \delta \Lambda^{-n+j})} ) \mu (
    S_{(\delta \Lambda^{-n+k})} ) + C 2 (m+2) \mu ( S_{( \delta
      \Lambda^{-n+k})} ) e^{-\tau (k-j)} \\ & \leq c^2 \delta^{2}
    \Lambda^{- 2n + j + k} + c C 2 (m+2) \delta \Lambda^{-n+k}
    e^{-\tau (k-j)}.
  \end{align*}
  We obtain that
  \[
  \mu (B_n (\delta)) \leq \sum_{0 \leq j < k < n} \Bigl( c^2
  \delta^{2} \Lambda^{- 2n + j + k} + c C 2 (m+2) \delta
  \Lambda^{-n+k} e^{-\tau (k-j)} \Bigr) \leq K_0 \delta,
  \]
  for some constant $K_0$.
\end{proof}

We now consider the set
\[
E_{j,r} = \{\, x : d(x,f^jx) < 2 r \,\}.
\]
In the arguments that follow, we need to control the measure of this
set in terms of $r$ when $j$ is small. Thus we cannot use directly
Lemma~\ref{Lemma:A:BV-Linf}.
\begin{lemma} \label{lem:Ekmeasure}
  If $\mu$ satisfies $\mu (I) \leq c |I|$ for any interval $I$,
  then there exists a constant $K_1$ such that
  \[
  \mu (E_{j,r}) \leq K_0 \delta + K_1 r \delta^{-1}
  \]
  holds for any $4 r < \delta < \delta_0$. In particular, there is
  a constant $K_2$ such that
  \[
  \mu (E_{j,r}) \leq K_2 \sqrt{r},
  \]
  when $4 r < \delta_0^2$.
\end{lemma}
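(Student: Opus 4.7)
The plan is to decompose $E_{j,r}$ along the partition $\mathscr{P}_j$: let $E_{j,r} = E' \cup E''$, where $E'$ consists of those $x$ lying in some $I \in A_j(\delta)$ (partition elements whose image under $f^j$ is shorter than $\delta$), and $E''$ consists of the remaining points. For $E'$, Lemma~\ref{lem:smallimage} gives immediately
\[
\mu(E') \leq \mu(\cup A_j(\delta)) \leq K_0 \delta,
\]
which accounts for the first term in the bound.

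Next I will estimate $\mu(E_{j,r} \cap I)$ for a single $I \in \mathscr{P}_j$ with $|f^j(I)| \geq \delta$. Since $f^j \colon I \to f^j(I)$ is a $C^1$-diffeomorphism with $|(f^j)'| \geq \Lambda^j \geq \Lambda$ and bounded distortion, the map $\phi \colon f^j(I) \to \mathbbm{R}$ defined by $\phi(y) = y - (f^j|_I)^{-1}(y)$ is monotonic with $\phi'(y) \geq 1 - \Lambda^{-j} \geq 1 - \Lambda^{-1}$. The set $\{\, y \in f^j(I) : |\phi(y)| < 2r \,\}$ therefore has Lebesgue measure at most $4r/(1-\Lambda^{-1})$. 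Pulling this back via the change of variables $y = f^j(x)$ and using bounded distortion together with $|f^j(I)| \geq \delta$,
\[
\mathrm{Leb}\{\, x \in I : |f^j(x) - x| < 2r \,\} \leq \frac{C|I|}{|f^j(I)|} \cdot \frac{4r}{1-\Lambda^{-1}} \leq \frac{4Cr|I|}{\delta(1-\Lambda^{-1})}.
\]
Applying the assumption $\mu(J) \leq c|J|$ for intervals $J$ and summing over all $I \in \mathscr{P}_j$ with $|f^j(I)| \geq \delta$ (whose Lebesgue lengths sum to at most $1$) gives
\[
\mu(E'') \leq \frac{4cCr}{\delta(1-\Lambda^{-1})} =: K_1 \frac{r}{\delta},
\]
which completes the first inequality with constants $K_0,K_1$ independent of $j$.

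For the second statement, optimise in $\delta$: choose $\delta = \sqrt{r}$. Then the admissibility condition $4r < \delta < \delta_0$ becomes $4r < \sqrt{r} < \delta_0$, i.e.\ $16 r < 1$ and $r < \delta_0^2$, both guaranteed once $4r < \delta_0^2$ (after possibly shrinking $\delta_0$). Substituting yields $\mu(E_{j,r}) \leq (K_0 + K_1)\sqrt{r} =: K_2\sqrt{r}$.

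The main obstacle is step two, where one has to turn the condition $|f^j(x) - x| < 2r$ into a thin strip in the image coordinate. The trick is that on a partition element with large image, the inverse branch $(f^j|_I)^{-1}$ is a uniform contraction by a factor $\leq \Lambda^{-j} \leq \Lambda^{-1} < 1$, so $\phi(y) = y - (f^j|_I)^{-1}(y)$ is uniformly monotonic regardless of how small $j$ is; once this is in place the rest reduces to bookkeeping via bounded distortion and the absolute-continuity bound $\mu(J) \leq c|J|$.
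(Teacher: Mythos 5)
Your proof is correct and follows essentially the same route as the paper's: decompose along $\mathscr{P}_j$, control the small-image cells via Lemma~\ref{lem:smallimage}, and on cells with $|f^j(I)| \geq \delta$ use expansion plus bounded distortion to show $I \cap E_{j,r}$ is a short interval before summing and applying $\mu(J) \leq c|J|$. The paper compresses your monotonicity argument for $\phi(y) = y - (f^j|_I)^{-1}(y)$ into the single phrase ``by bounded distortion,'' so your version is a faithful (and slightly more careful, e.g.\ tracking the factor $(1-\Lambda^{-1})^{-1}$ and noting the shrinking of $\delta_0$) expansion of the same argument.
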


\begin{proof}
  Suppose that $4 r < \delta < \delta_0$. By bounded distortion,
  we have for any $I \in \mathscr{P}_j$ with $|f^j (I)| \geq
  \delta$ that $I \cap E_{j,r}$ is an interval of length at most
  $4 C r \delta^{-1} |I|$. Together with
  Lemma~\ref{lem:smallimage}, we get that
  \[
  \mu (E_{j,r}) \leq \mu (\cup A_j (\delta)) + K_1 r \delta^{-1}
  \leq K_0 \delta + K_1 r \delta^{-1}.
  \]

  When $4 r < \delta_0^2$, we may choose $\delta = \sqrt{r}$ to
  obtain
  \[
  \mu (E_{j,r}) \leq (K_0 + K_1) \sqrt{r}. \qedhere
  \]
\end{proof}

We are now in position to prove
Proposition~\ref{prop:A2forpwexp}.

\begin{proof}[Proof of Proposition~\ref{prop:A2forpwexp}]
  Let $\tilde\gamma > 1$, and put
  \[
  E_k = \bigcup_{j = 1}^{2 (\log k)^{\tilde\gamma}} E_{j, r_k}.
  \]
  Then
  \begin{equation} \label{eq:Ekmeasure}
    \mu (E_k) \leq K_2 (\log k)^{\tilde\gamma} \sqrt{r_k},
  \end{equation}
  for some constant $K_2$, by Lemma~\ref{lem:Ekmeasure}.

  Put
  \[
  g (x) = \sup_{r > 0} \frac{1}{2r} \int_{B(x,r)}
  \mathbbm{1}_{E_k} \, \mathrm{d} \mu.
  \]
  By the Hardy--Littlewood maximal inequality, the set
  \[
  F_k (c) = \{\, x : g(x) > c \,\}
  \]
  has Lebesgue measure at most $\frac{3}{c} \mu (E_k)$. Hence
  \[
  \mu (F_k (c)) \leq \frac{3C}{c} K_2 (\log k)^{\tilde\gamma} \sqrt{r_k}.
  \]

  Note that
  \[
  x \in F_k (c)^{\complement} \qquad \Rightarrow \qquad \mu (E_k \cap B
  (x,r_k)) \leq c \mu (B(x,r_k)).
  \]
  For constants $\alpha,\beta>0$, let $n_k = k^\beta$ and $c = n_k^{-\alpha}$. We obtain
  \[
  \mu (F_{n_k} (n_k^{-\alpha})) \leq 3C K_2 k^{\alpha \beta}
  r_{n_k}^\frac{1}{2} (\beta \log k)^{\tilde\gamma}.
  \]
  Assuming that $r_k = O(k^{-\sigma})$ for some $\sigma > 0$, we
  have
  \[
  \mu (F_{n_k} (n_k^{-\alpha})) \leq 3C K_2 k^{\frac{\beta}{2} (2
    \alpha - \sigma)} (\beta \log k)^{\tilde\gamma}.
  \]
  Take $0 < 2 \alpha < \sigma$, $\beta$ large enough that
  \[
  \sum_k \mu (F_{n_k} (n_k^{-\alpha})) < \infty.
  \]
  Hence $\mu (\limsup_{k\to \infty} F_{n_k} (n_k^{-\alpha})) = 0$
  and we have for a.e.\ $\tilde{x}$ that
  \[
  \mu (E_{n_k} \cap B (\tilde{x}, r_{n_k})) \leq n_k^{-\alpha}
  \mu (B (\tilde{x}, r_{n_k}))
  \]
  holds for all large $k$ (depending on $\tilde{x}$). Let such an
  $\tilde{x}$ be fixed. For piecewise expanding maps we can assume a stronger
	form of equation \eqref{eq:measure-reg}, namely we assume that $\tilde{x}$ is
  such that there exists a constant $c_0 > 0$ such that
  \begin{equation} \label{eq:localdensity}
    c_0^{-1} r < \mu (B(\tilde{x}, r)) < c_0 r
  \end{equation}
  holds for all $0 < r < 1$, since this is a property which holds
  for a.e.\ $\tilde{x}$.

  Consider any large $n > 0$ and take $k$ such that $n_k \leq n <
  n_{k+1}$. We then have
  \[
  \tilde{E}_n = \bigcup_{j=1}^{(\log n)^{\tilde\gamma}} E_{j,r_n} \subset
  E_{n_k}
  \]
  when $k$ is large. Since $r_n$ is a decreasing sequence, we
  also have $B (\tilde{x}, r_n) \subseteq B (\tilde{x},
  r_{n_k})$. Hence
  \begin{align*}
    \mu (B (\tilde{x}, r_n) \cap \tilde{E}_n) &\leq \mu (B
    (\tilde{x}, r_{n_k}) \cap E_{n_k}) \\ &\leq n_k^{-\alpha} \mu
    (B (\tilde{x}, r_{n_k})) \\ & \leq 2 n^{-\alpha} \mu (B
    (\tilde{x}, r_{n_k})),
  \end{align*}
  if $n$ and $k$ are large. By \eqref{eq:r-assumption} and
  \eqref{eq:localdensity}, it follows that there exists a
  constant $K$ such that
  \[
  \mu (B (\tilde{x}, r_n) \cap \tilde{E}_n) \leq K n^{-\alpha}
  \mu (B (\tilde{x},r_n))
  \]
  holds for all $n$.

  Suppose that $x \in B (\tilde{x}, r_n) \cap f^{-j} B(\tilde{x},
  r_n)$ for some $j \leq (\log n)^{\tilde\gamma}$. Then $d(x,f^j x) < 2
  r_n$ and hence $x \in B (\tilde{x}, r_n) \cap \tilde{E}_n$. We
  therefore have $B (\tilde{x}, r_n) \cap f^{-j} B(\tilde{x},
  r_n) \subset B (\tilde{x}, r_n) \cap \tilde{E}_n$ and 
  \[
  \mu (B (\tilde{x}, r_n) \cap f^{-j} B(\tilde{x}, r_n) ) \leq
  \mu (B (\tilde{x}, r_n) \cap \tilde{E}_n ) \leq K n^{-\alpha}
  \mu (B (\tilde{x},r_n)). 
  \]
	
To complete the proof, it suffices to estimate $\Xi_{p_n,n}$.  We can
split as in equation~\eqref{eq.xi-split1}, but this time take
$g(n)=(\log n)^{\tilde\gamma}$.  The arguments above show that the
first right-hand term of~\eqref{eq.xi-split1} is $O(n^{-1-\gamma})$
for a choice $\sigma$ consistent with
equation~\eqref{eq.short-constants}.  Similarly using condition (A1),
the second right-hand term of~\eqref{eq.xi-split1} is also
$O(n^{-1-\gamma})$, again for a choice of constants consistent
with~\eqref{eq.short-constants}.
\end{proof}

\subsection{Further remarks on Condition (A2) for quadratic maps} \label{sec:A2forquadratic}
We consider $f = f_a \colon [0,1] \to [0,1]$ defined by $f_a(x) =
a x (1-x)$. For some parameters, including the parameters
described by Benedicks and Carleson, there is an $f_a$-invariant
probability measure $\mu_a$ which is equivalent with respect to
Lebesgue measure.  When $a$ is a Benedicks--Carelson parameter,
$(f_a,\mu_a)$ has exponential decay of correlations for functions
of bounded variation against $L^1$ as proved by Young
\cite{Young_quadratic}.  As remarked upon in
Section~\ref{sec.piece-diff}, Proposition~\ref{prop.piece-diff}
applies to this family of maps. It is also possible to apply the
methods used in the proof of
Proposition~\ref{prop:A2forpwexp}. However this requires imposing
the regularity condition \eqref{eq:r-assumption} on the sequence
$r_n$. Indeed, under dynamical assumptions that capture the
quadratic map, Collet proved \cite[Corollary~2.4]{Collet} that
there exists a constant $\beta' \in (0,1)$ such that the set
\[
\tilde{E}_k = \{\, x : d(x,f_a^j (x)) < k^{-1} \text{ for some }
j \leq (\log k)^5 \,\}
\]
satisfies
\[
\mu (\tilde{E}_k) \leq C k^{- \beta'}.
\]
Let
\[
E_k := \{\, x : d(x,f_a^j (x)) < r_k \text{ for some }
j \leq (\log k)^4 \,\}.
\]
Since $\mu(B(\tilde{x},r_n))=O(n^{-\sigma})$, we can use the regularity conditions \eqref{eq:measure-reg}, \eqref{eq:r-assumption} for $\mu$ and
the sequence $r_n$ respectively to deduce that  $E_k \subset \tilde{E}_{k^b}$ for some $b>0$. Hence
\[
\mu (E_k) \leq C k^{- \beta_0},
\]
for some $\beta_0>0$. Replacing \eqref{eq:Ekmeasure} by the above estimate in the proof
of Proposition~\ref{prop:A2forpwexp} allows us to deduce that condition (A2) applies. 
(Within, let $\alpha\in(0,\beta_0)$ and take $\beta$ sufficiently large.)

\section{Appendix A --- The blocking argument}\label{sec.appendix1}

We follow \cite{Collet} to prove the blocking argument.

\subsection{Assumptions}

We consider a dynamical system $(\XX,f,\mu)$ where $\XX$ is an
interval and $\mu$ is a probability measure. In this section, we
will prove Proposition~\ref{prop:blocking}. To do so, we only
need to assume that $\mu$ is invariant, but when using
Proposition~\ref{prop:blocking} it shall be necessary to assume
mixing.

\subsubsection{Notation}

We have an observable $\phi \colon \XX \to \mathbbm{R}$. Let $X_k =
\phi \circ f^{k-1}$ and $M_n = \max \{X_1, \ldots, X_n \}$.

\subsection{Preparations}

\begin{lemma}[Collet {\cite[Proposition~3.2]{Collet}}]
  Let $t, r, m, k, p$ be non-negative integers. Then
  \begin{equation} \label{eq:blocking1}
    0 \leq \mu ( M_r < u ) - \mu ( M_{r+k} < u ) \leq k \mu
    ( \phi \geq u )
  \end{equation}
  and
  \begin{multline}
    \Bigl| \mu ( M_{m+p+t} < u ) - \mu (M_m < u ) +
    \sum_{j=1}^p \expectation (\mathbbm{1}_{\phi \geq u}
    \mathbbm{1}_{M_m < u} \circ f^{p+t-j}) \Bigr| \\ \leq t \mu
    (\phi \geq u) + 2 p \sum_{j=1}^p \expectation (
    \mathbbm{1}_{\phi \geq u} \mathbbm{1}_{\phi \geq u} \circ
    f^j).
    \label{eq:blocking2}
  \end{multline}
\end{lemma}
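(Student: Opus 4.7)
My plan is to prove both bounds by expressing the measure differences as measures of explicit sets and then decomposing those sets. For \eqref{eq:blocking1}, monotonicity gives $\{M_{r+k} < u\} \subseteq \{M_r < u\}$, which yields non-negativity. For the upper bound I would use
\[
\{M_r < u\} \setminus \{M_{r+k} < u\} \subseteq \bigcup_{j=r+1}^{r+k} \{X_j \geq u\},
\]
so that a union bound combined with $f$-invariance of $\mu$ gives total measure at most $k\mu(\phi \geq u)$.

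For \eqref{eq:blocking2} the plan has three steps. First, factor $\mathbbm{1}_{M_{m+p+t} < u} = \mathbbm{1}_{M_{p+t} < u} \cdot \mathbbm{1}_{M_m < u} \circ f^{p+t}$ and use $f$-invariance on the standalone $\mathbbm{1}_{M_m < u}$ to rewrite
\[
\mu(M_m < u) - \mu(M_{m+p+t} < u) = \expectation \bigl( \mathbbm{1}_{M_{p+t} \geq u} \cdot \mathbbm{1}_{M_m < u} \circ f^{p+t} \bigr).
\]
Second, I would decompose $\mathbbm{1}_{M_{p+t} \geq u} = \mathbbm{1}_{\bigcup_{i=1}^{p+t}\{X_i \geq u\}}$ by partitioning the index set into a \emph{block} $\{2,\dotsc,p+1\}$ of size $p$ and a \emph{gap} $\{1\} \cup \{p+2,\dotsc,p+t\}$ of size $t$. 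Third, for each block index $i = j+1$ with $j \in \{1,\dotsc,p\}$, use $X_{j+1} = \phi \circ f^{j}$ together with the $f$-invariance identity
\[
\int g \circ f^a \cdot h \circ f^b \, \mathrm{d}\mu = \int g \cdot h \circ f^{b-a} \, \mathrm{d}\mu \quad (a \leq b)
\]
applied with $a = j$, $b = p+t$, to obtain
\[
\expectation \bigl( \mathbbm{1}_{\phi \geq u} \circ f^{j} \cdot \mathbbm{1}_{M_m < u} \circ f^{p+t} \bigr) = \expectation \bigl( \mathbbm{1}_{\phi \geq u} \cdot \mathbbm{1}_{M_m < u} \circ f^{p+t-j} \bigr).
\]

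The upper half of the claimed estimate then follows from $\mathbbm{1}_{\bigcup \{X_i \geq u\}} \leq \sum \mathbbm{1}_{X_i \geq u}$ applied separately to the block and to the gap: the gap contributes at most $t\mu(\phi \geq u)$, and the block contributes exactly the sum appearing in the lemma. The matching lower bound comes from a Bonferroni inequality, whose correction $\sum_{2 \leq i < i' \leq p+1} \expectation(\mathbbm{1}_{X_i \geq u}\mathbbm{1}_{X_{i'} \geq u})$ equals $\sum_{r=1}^{p-1}(p-r) \expectation(\mathbbm{1}_{\phi \geq u} \mathbbm{1}_{\phi \geq u} \circ f^r)$ by invariance, and this is bounded by $p\sum_{j=1}^{p} \expectation(\mathbbm{1}_{\phi \geq u}\mathbbm{1}_{\phi \geq u} \circ f^j)$; the factor of $2$ in the stated bound absorbs remaining slack. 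The only real subtlety is the specific block placement in step two: placing the block at the end or beginning of $\{1,\dotsc,p+t\}$ instead would change the exponent in the $f$-invariance identity and fail to produce the precise shift $f^{p+t-j}$ required in the statement. Beyond this book-keeping, everything reduces to union bounds, Bonferroni, and $f$-invariance of stationary events.
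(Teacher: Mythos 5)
Your proof is correct, and the route is a streamlined variant of the paper's (which follows Collet). The paper factors $\mathbbm{1}_{M_{m+p+t}<u}$ in three pieces, $\mathbbm{1}_{M_p<u}\cdot\mathbbm{1}_{M_t<u}\circ f^p\cdot\mathbbm{1}_{M_m<u}\circ f^{p+t}$, first bounds the cost of dropping the gap factor by $t\mu(\phi\geq u)$ as in \eqref{eq:part1}, then applies inclusion--exclusion to $\{M_p<u\}\cap\{M_m\circ f^{p+t}<u\}$ as in \eqref{eq:part2}, and combines by the triangle inequality. You instead factor only once, as $\mathbbm{1}_{M_{p+t}<u}\cdot\mathbbm{1}_{M_m<u}\circ f^{p+t}$, use invariance to express the full difference $\mu(M_m<u)-\mu(M_{m+p+t}<u)$ as the single expectation $\expectation\bigl(\mathbbm{1}_{M_{p+t}\geq u}\,\mathbbm{1}_{M_m<u}\circ f^{p+t}\bigr)$, and run one Bonferroni sandwich on $\bigcup_{i=1}^{p+t}\{X_i\geq u\}$ over a $p$-block and a $t$-gap. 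Since your two error sources land on opposite sides of the sandwich rather than being added by the triangle inequality, you even save a factor of $2$ on the quadratic term, as you note. A small but genuine merit of the block placement $\{2,\ldots,p+1\}$: with the convention $X_i=\phi\circ f^{i-1}$, your invariance identity produces exactly the exponents $p+t-j$, $j=1,\ldots,p$, appearing in \eqref{eq:blocking2}, whereas the paper's block $\{1,\ldots,p\}$ gives exponents $p+t-(k-1)$ for $k=1,\ldots,p$, shifted by one from the displayed sum; this is harmless for the later estimates, but your bookkeeping matches the statement as written. The only edge case your partition does not accommodate is $t=0$, where $\{2,\ldots,p+1\}$ and $\{1\}\cup\{p+2,\ldots,p+t\}$ no longer partition $\{1,\ldots,p+t\}$, but that case is never needed (in Proposition~\ref{prop:blocking} one always takes $t\geq 1$).
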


\begin{proof}
  We have $\{M_r < u\} \supset \{M_{r+k} < u \}$ and
  \[
  \{M_r < u\} \setminus \{M_{r+k} < u \} = \bigcup_{j =
    r+1}^{r+k} \{ X_j \geq u \}.
  \]
  Hence
  \[
  0 \leq \mu (M_r < u) - \mu (M_{r+k} < u ) \leq \sum_{j=
    r+1}^{r+k} \mu (X_j \geq u) = k \mu ( \phi \geq u ),
  \]
  which is \eqref{eq:blocking1}.

  We have
  \[
  \mathbbm{1}_{M_{m+p+t} < u} = \mathbbm{1}_{M_p < u}
  \mathbbm{1}_{M_t < u} \circ f^p \mathbbm{1}_{M_m < u} \circ
  f^{p+t}.
  \]
  Therefore,
  \begin{align*}
    0 \leq \mathbbm{1}_{M_p < u} &\mathbbm{1}_{M_m < u} \circ
    f^{p+t} - \mathbbm{1}_{M_{m+p+t} < u} \\ &= \mathbbm{1}_{M_p
      < u} \mathbbm{1}_{M_m < u} \circ f^{p+t} - \mathbbm{1}_{M_p
      < u} \mathbbm{1}_{M_t < u} \circ f^p \mathbbm{1}_{M_m < u}
    \circ f^{p+t} \\ &= \mathbbm{1}_{M_p < u} \mathbbm{1}_{M_m <
      u} \circ f^{p+t} (1 - \mathbbm{1}_{M_t < u} \circ f^p) \\ &
    \leq 1 - \mathbbm{1}_{M_t < u} \circ f^p = \mathbbm{1}_{M_t
      \geq u} \circ f^p.
  \end{align*}
  It then follows that
  \begin{align}
    \bigl| \expectation \mathbbm{1}_{M_{m+p+t} < u} &-
    \expectation ( \mathbbm{1}_{M_p < u} \mathbbm{1}_{M_m < u}
    \circ f^{p+t}) \bigr| \nonumber \\ &\leq \expectation
    (\mathbbm{1}_{M_t \geq u} \circ f^p) = \mu (M_t \geq u)
    \nonumber \\ &= \mu \biggl( \bigcup_{k=1}^t \{ \phi \circ f^k
    \geq u\} \biggr) \leq t \mu ( \phi \geq u
    ). \label{eq:part1}
  \end{align}

  Since
  \begin{multline*}
    \{ M_m \circ f^{p+t} < u \} \setminus \{M_m \circ f^{p+t} < u
    \text{ and } M_p < u \} \\ = \bigcup_{k=1}^p \{ X_k \geq u \}
    \cap \{M_m \circ f^{p+t} < u \},
  \end{multline*}
  we have
  \[
  \expectation \mathbbm{1}_{M_m < u} - \sum_{k=1}^p \expectation
  ( \mathbbm{1}_{X_k \geq u} \mathbbm{1}_{M_m < u} \circ f^{p+t}
  ) \leq \expectation ( \mathbbm{1}_{M_p < u} \mathbbm{1}_{M_m <
    u} \circ f^{p+t}).
  \]
  By the inclusion--exclusion inequality, we also have
  \begin{multline*}
    \expectation ( \mathbbm{1}_{M_p < u} \mathbbm{1}_{M_m < u}
    \circ f^{p+t}) \\ \leq \expectation \mathbbm{1}_{M_m < u} -
    \sum_{k=1}^p \expectation ( \mathbbm{1}_{X_k \geq u}
    \mathbbm{1}_{M_m < u} \circ f^{p+t} ) \\ + \sum_{k=1}^p
    \sum_{\substack{l=1 \\ l \neq k}}^p \expectation (
    \mathbbm{1}_{X_k \geq u} \mathbbm{1}_{X_l \geq u}
    \mathbbm{1}_{M_m < u} \circ f^{p+t} ).
  \end{multline*}
  It follows that
  \begin{align}
    \Bigl| \expectation ( \mathbbm{1}_{M_p < u} &
    \mathbbm{1}_{M_m < u} \circ f^{p+t}) - \expectation
    \mathbbm{1}_{M_m < u} + \sum_{k=1}^p \expectation (
    \mathbbm{1}_{X_k \geq u} \mathbbm{1}_{M_m < u} \circ f^{p+t}
    ) \Bigr| \nonumber \\ & \leq \sum_{k=1}^p \sum_{\substack{l=1
        \nonumber \\ l \neq k}}^p \expectation ( \mathbbm{1}_{X_k
      \geq u} \mathbbm{1}_{X_l \geq u} \mathbbm{1}_{M_m < u}
    \circ f^{p+t} ) \nonumber \\ & \leq \sum_{k=1}^p
    \sum_{\substack{l=1 \\ l \neq k}}^p \expectation (
    \mathbbm{1}_{X_k \geq u} \mathbbm{1}_{X_l \geq u} ) \leq 2 p
    \sum_{k=1}^p \expectation (\mathbbm{1}_{\phi \geq u}
    \mathbbm{1}_{X_k \geq u}). \label{eq:part2}
  \end{align}

  The estimates \eqref{eq:part1} and \eqref{eq:part2} together
  with the triangle inequality imply \eqref{eq:blocking2}.
\end{proof}

\subsection{Proof of Proposition~\ref{prop:blocking}}

We will now prove Proposition~\ref{prop:blocking}.

Let $l$ be a large number and $s \in (0,\frac{1}{2}]$. Put $p =
  [l^s]$ and write $l$ as $l = pq + r$ where $0 \leq r < p$. 

We have by \eqref{eq:blocking1} that
\[
\mu ( M_{pq} < u ) - \mu ( M_{q(p+t)} < u ) \leq qt \mu (
\phi \geq u ).
\]
If $r \leq qt$, then $l = p q + r \leq q (p + t)$ and $ \mu (
M_l < u ) - \mu ( M_{q(p+t)} < u ) \geq 0$. However, we have
$q \sim l^{1-s}$ and $t \geq 1$, so $r < p \leq q \leq qt$ holds
for all large enough $l$, since $1-s \geq s$.  Hence, when $l$ is
large, we have
\begin{align*}
  0 & \leq \mu ( M_l < u ) - \mu ( M_{q(p+t)} < u ) \\ & \leq
  \mu ( M_{pq} < u ) - \mu ( M_{q(p+t)} < u ) \leq qt \mu (
  \phi \geq u ),
\end{align*}
and
\[
| \mu ( M_l < u ) - \mu ( M_{q(p+t)} < u ) | \leq qt \mu (
\phi \geq u ).
\]

Let
\[
\Sigma_j = \sum_{k=1}^p \expectation ( \mathbbm{1}_{\phi \geq u }
\mathbbm{1}_{M_{ (j - 1) (p + t)} < u } \circ f^{p + t - k} ).
\]
By the triangle inequality we have
\begin{multline*}
  | \mu ( M_{j (p+t)} < u ) - (1 - p \mu ( \phi \geq u )) \mu
  ( M_{(j - 1) (p + t)} < u ) | \\ \leq \bigl| p \mu (\phi
  \geq u ) \mu (M_{(j-1) (p + t)} < u ) - \Sigma_j \bigr| \\ +
  \bigl| \mu (M_{j (p + t)} < u ) - \mu ( M_{(j - 1) (p + t)}
  < u ) + \Sigma_j \bigr|,
\end{multline*}
and \eqref{eq:blocking2} with $m = (j - 1) (p + t)$ implies that
\begin{multline}
  | \mu ( M_{j (p + t)} < u ) - (1 - p \mu ( \phi \geq u ))
  \mu ( M_{(j - 1) (p + t)} < u ) | \\ \leq \Gamma_j := \bigl|
  p \mu (\phi \geq u ) \mu (M_{(j - 1) (p + t)} < u ) -
  \Sigma_j \bigr| \\ + t \mu ( \phi \geq u ) + 2 p \sum_{k=1}^p
  \expectation (\mathbbm{1}_{\phi \geq u} \mathbbm{1}_{\phi \geq
    u} \circ f^k). \label{eq:iterationstep}
\end{multline}

Let $\eta = 1 - p \mu ( \phi \geq u )$. Now, using
\eqref{eq:iterationstep} iteratively, we get
\begin{align*}
  | \mu ( M_{q(p+t)} &< u ) - \eta^q | \\ & \leq | \mu (
  M_{q(p+t)} < u ) - \eta \mu ( M_{(q-1) (p+t)} < u ) | \\ &
  \phantom{=} + | \eta \mu ( M_{(q-1) (p+t)} < u ) - \eta^q |
  \\ & \leq \Gamma_q + \eta | \mu ( M_{(q-1) (p+t)} < u ) -
  \eta^{q-1} | \\ & \ldots \\ & \leq \Gamma_q + \eta \Gamma_{q-1}
  + \ldots + \eta^{q-1} \Gamma_1.
\end{align*}
This proves Proposition~\ref{prop:blocking}.

\section{Appendix B --- On correlation decay and recurrence.}\label{sec.appendix2}
In this section we collect some
useful results on decay of correlation estimates, and recurrence time
distributions. In particular, these results are used for checking
condition (A2). These results might also have broader interest.

\subsection{Decay of correlation estimates.}
In this section we explain how condition (A1) can be improved to
having (exponential) decay of correlations for
$\mathcal{B}_1=\mathrm{BV}$ versus $\mathcal{B}_2=L^p$, (with $p>1$).

The set up is a interval map $f \colon \XX \to \XX$ with an invariant
probability measure $\mu$. For $\varphi:\XX\to\mathbb{R}$, recall the
$L^p$ norms for $p \in [1,\infty]$ is defined by
\[
\lVert \varphi \rVert_p = \biggl( \int |\varphi|^p \, \mathrm{d} \mu
\biggr)^\frac{1}{p}
\]
for $p < \infty$, and $\lVert \varphi \rVert_\infty = \sup
|\varphi|$. The bounded variation norm $\lVert \varphi
\rVert_\mathrm{BV} = \lVert \varphi \rVert_\infty + \var \varphi$,
where $\var \varphi$ is the total variation of $\varphi$ on $\XX$. We
have the following result.

\begin{proposition}\label{prop.decay-bvlp}
  Suppose that correlations decay exponentially for $\mathrm{BV}$ versus
  $L^{\infty}$. For any $p > 1$, correlations decay exponentially for
  $\mathrm{BV}$ versus $L^p$.
\end{proposition}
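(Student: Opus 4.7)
The plan is to use a standard truncation/interpolation argument: we split $\varphi_2 \in L^p$ into a bounded part and a small $L^1$ part, apply the hypothesis (decay of correlations against $L^\infty$) to the bounded part, and a trivial bound to the remainder, then optimize the truncation threshold.

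Concretely, given $\varphi_1 \in \mathrm{BV}$ and $\varphi_2 \in L^p$, I would fix a threshold $N>0$ (to be chosen as a function of $j$) and write $\varphi_2 = \varphi_2^{(1)} + \varphi_2^{(2)}$ where $\varphi_2^{(1)} = \varphi_2 \mathbbm{1}_{|\varphi_2| \leq N}$ and $\varphi_2^{(2)} = \varphi_2 \mathbbm{1}_{|\varphi_2| > N}$. Then by the triangle inequality,
\[
\mathcal{C}_j(\varphi_1,\varphi_2,\mu) \leq \mathcal{C}_j(\varphi_1,\varphi_2^{(1)},\mu) + \mathcal{C}_j(\varphi_1,\varphi_2^{(2)},\mu).
\]
For the first term, $\|\varphi_2^{(1)}\|_\infty \leq N$, so the assumed BV versus $L^\infty$ decay gives
\[
\mathcal{C}_j(\varphi_1,\varphi_2^{(1)},\mu) \leq C e^{-\tau j} \|\varphi_1\|_{\mathrm{BV}} N.
\]
For the second term, I would use the brutal bound $\mathcal{C}_j(\varphi_1,\varphi_2^{(2)},\mu) \leq 2\|\varphi_1\|_\infty \|\varphi_2^{(2)}\|_1$ together with the Markov-type estimate $\|\varphi_2^{(2)}\|_1 \leq \|\varphi_2\|_p^p / N^{p-1}$, yielding
\[
\mathcal{C}_j(\varphi_1,\varphi_2^{(2)},\mu) \leq 2 \|\varphi_1\|_{\mathrm{BV}} \frac{\|\varphi_2\|_p^p}{N^{p-1}}.
\]

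The final step is to balance the two contributions. Setting $N = \|\varphi_2\|_p \, e^{\tau j / p}$ makes both right-hand sides comparable to $\|\varphi_1\|_{\mathrm{BV}} \|\varphi_2\|_p e^{-\tau (p-1) j / p}$, giving
\[
\mathcal{C}_j(\varphi_1,\varphi_2,\mu) \leq C' e^{-\tau' j} \|\varphi_1\|_{\mathrm{BV}} \|\varphi_2\|_p, \qquad \tau' = \tau (p-1)/p,
\]
which is the desired exponential decay of correlations for $\mathrm{BV}$ versus $L^p$. There is no serious obstacle here; the only point that deserves a line of care is verifying that the $\|\varphi_2\|_p^p$ factor from the Markov estimate collapses back to a linear factor in $\|\varphi_2\|_p$ after the optimization in $N$, which is exactly what the choice $N \propto \|\varphi_2\|_p$ achieves (equivalently, one can normalize $\|\varphi_2\|_p = 1$ and invoke homogeneity of $\mathcal{C}_j$ in $\varphi_2$).
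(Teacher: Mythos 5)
Your proof is correct, and it takes the same truncation decomposition as the paper but improves on it in a way worth noting. The paper also splits the $L^p$ function at a level $m$, applies BV-versus-$L^\infty$ decay to the bounded piece, and uses the same $\|\varphi_2\|_1 \leq \|\varphi\|_p^p/m^{p-1}$ estimate for the tail (derived there via a weak-type inequality plus H\"older, but amounting to the same Markov bound you invoke). However, the paper chooses the threshold $m = e^{\tau n/p}$ \emph{independently} of $\|\varphi\|_p$, which yields the bound $(C + 2\|\varphi\|_p^p)\, e^{-(1-1/p)\tau n}\, \|\psi\|_{\mathrm{BV}}$ — not linear in $\|\varphi\|_p$. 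To upgrade this pointwise-in-$\varphi$ estimate to the desired operator bound, the authors then invoke Banach--Steinhaus (following an unpublished note of Collet), and even remark that their proof requires the axiom of choice. Your choice $N = \|\varphi_2\|_p\, e^{\tau j/p}$ — or equivalently normalizing $\|\varphi_2\|_p = 1$ and using the degree-one homogeneity of $\mathcal{C}_j$ in $\varphi_2$ — makes both contributions directly comparable to $\|\varphi_1\|_{\mathrm{BV}}\, \|\varphi_2\|_p\, e^{-\tau(p-1)j/p}$, so the linear dependence on $\|\varphi_2\|_p$ comes out for free and the uniform-boundedness step is unnecessary. This is a genuine simplification: it is more elementary, more constructive, and gives the explicit constant $C + 2$ rather than one supplied abstractly by Banach--Steinhaus.
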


Our proof relies on the Banach--Steinhaus theorem. Hence we
assume that the axiom of choice is valid.

\begin{proof}
  Fix $p > 1$ and suppose that $\varphi \in L^p$ and $\psi \in
  \mathrm{BV}$. (Within this section, $\psi$ will denote such a
  $\mathrm{BV}$ function: it is not to be confused with the observable
  used in previous sections.)  We note that $\varphi \in L^q$ for any
  $q \leq p$, that $\lVert \varphi \rVert_q \leq \lVert \varphi
  \rVert_p$ for such $q$, and that
  \begin{equation} \label{eq:weak}
    \mu \bigl(\{\, x : |\varphi (x)| \geq t \, \}\bigr) \leq \frac{1}{t^q} \lVert
    \varphi \rVert_q^q
  \end{equation}
  for any $t > 0$.

  Take a positive number $m$, which will be chosen more precisely
  later. We write $\varphi $ as a sum $\varphi = \varphi_1 +
  \varphi_2$, where $\varphi_1$ is defined by
  \[
  \varphi_1 = \mathbbm{1}_{\{\, x : |\varphi(x)| \leq m \,\}} \varphi.
  \]
  Then $\lVert \varphi_1 \rVert_\infty \leq m$ and $\varphi_2 \in L^p$ with
  support in the set $\{\, x : |\varphi(x)| \geq m \,\}$. We shall
  first estimate the $L^q$ norm of $\varphi_2$ for $q < p$.

  Take $q < p$ and let $r, s > 1$ be such that $\frac{1}{r} +
  \frac{1}{s} = 1$ and $sq \leq p$. We have by H\"{o}lder's
  inequality and \eqref{eq:weak} that
  \begin{align}
  	\lVert \varphi_2 \rVert_q & \leq \biggl( \int_{\{\, x :
  		|\varphi(x)| \geq m \,\}} |\varphi|^q \, \mathrm{d} \mu
  	\biggr)^\frac{1}{q} = \biggl( \int \mathbbm{1}_{\{\, x :
  		|\varphi(x)| \geq m \,\}} |\varphi|^q \, \mathrm{d}{\mu}
  	\biggr)^\frac{1}{q} \nonumber \\ &\leq (\mu (\{\, x : |\varphi(x)|
  	\geq m \,\}) )^\frac{1}{rq} \lVert \varphi \rVert_{sq} \leq
  	\frac{1}{m^\frac{p}{rq}} \lVert \varphi \rVert_p^\frac{p}{rq}
  	\lVert \varphi \rVert_{sq} \leq \frac{1}{m^\frac{p}{rq}} \lVert
  	\varphi \rVert_p^{1 + \frac{p}{rq}}.
  	\label{eq:Lq}
  \end{align}

  We now consider the correlation between $\varphi$ and $\psi$.
  Let
  \[
  C(\varphi,\psi,n) = \biggl| \int \varphi \circ f^n \psi \,
  \mathrm{d} \mu - \int \varphi \, \mathrm{d} \mu \int \psi \,
  \mathrm{d} \mu \biggr|.
  \]
  By the decomposition $\varphi = \varphi_1 + \varphi_2$ and the
  triangle inequality, we have
  \[
    C (\varphi,\psi,n) \leq C(\varphi_1, \psi,n) + C(\varphi_2,\psi,n).
  \]
  Using the decay of correlations for $\mathrm{BV}$ against
  $L^\infty$, we get
  \[
  C(\varphi_1,\psi,n) \leq C e^{-\tau n} \lVert \varphi_1
  \rVert_\infty \lVert \psi \rVert_\mathrm{BV} \leq C e^{-\tau n} m
  \lVert \psi \rVert_\mathrm{BV}.
  \]
  The correlation with $\varphi_2$ is estimated using the triangle
  inequality and \eqref{eq:Lq} with $q = 1$ and $s = p$. We get
  \begin{align*}
    C(\varphi_2, \psi, n) & \leq \biggl| \int \varphi_2 \circ f^n \psi
    \, \mathrm{d} \mu \biggr| + \biggl| \int \varphi_2 \, \mathrm{d}
    \mu \int \psi \, \mathrm{d} \mu \biggr| \\ & \leq 2 \lVert
    \varphi_2 \rVert_1 \lVert \psi \rVert_\infty \leq
    \frac{2}{m^\frac{p}{r}} \lVert \varphi \rVert_p^{1 + \frac{p}{r}}
    \lVert \psi \rVert_\mathrm{BV} = \frac{2}{m^{p-1}} \lVert \varphi
    \rVert_p^p \lVert \psi \rVert_\mathrm{BV}.
  \end{align*}

  Combining these estimates, we get
  \[
  C(\varphi,\psi,n) \leq \Bigl( C e^{-\tau n} m + \frac{2}{m^{p-1}}
  \lVert \varphi \rVert_p^p \Bigr) \lVert \psi \rVert_\mathrm{BV}.
  \]
  Choose $m = e^{\frac{\tau}{p} n}$. Then
  \[
  C(\varphi,\psi,n) \leq (C + 2 \lVert \varphi \rVert_p^p) e^{- (1 -
    \frac{1}{p}) \tau n} \lVert \psi \rVert_\mathrm{BV}.
  \]
  In particular, for any $\varphi \in L^p$ and $\psi \in \mathrm{BV}$
  there is a constant $c (\varphi,\psi)$ such that
  \[
  C(\varphi,\psi,n) \leq c(\varphi,\psi) e^{- (1 - \frac{1}{p}) \tau
    n}.
  \]

  Now, an argument by Collet \cite{Collet0}, using the
  Banach--Steinhaus theorem, implies that there is a constant $c$ such
  that for any $\varphi \in L^p$ and $\psi \in \mathrm{BV}$ holds
  \[
  C (\varphi,\psi,n) \leq c e^{- (1 - \frac{1}{p}) \tau n} \lVert
  \varphi \rVert_p \lVert \psi \rVert_\mathrm{BV}.
  \]
  Hence $(f,\mu)$ has exponential decay of correlations for $L^p$
  against $\mathrm{BV}$.
\end{proof}

\subsection{Estimates on recurrence time statistics.}
A key argument in checking condition (A2) is understanding the
distribution of recurrent points in the sense of finding the measure
of the set:
\[
  E_{r,n} = \{\, x : d(f^n (x), x) < r \,\},
\]
in terms of $n$ and $r$. We have the following result.
\begin{lemma}\label{Lemma:A:BV-Linf}
  Suppose that $([0,1], f, \mu)$ has exponential decay of
  correlations for $L^\infty$ against $BV$, that is
  \[
  \biggl| \int \phi \circ f^n \psi \, \mathrm{d} \mu - \int \phi
  \, \mathrm{d} \mu \int \psi \, \mathrm{d} \mu \biggr| \leq C
  \lVert \phi \rVert_\infty \lVert \psi \rVert_{BV} e^{- \tau n}.
  \]
  Assume that $\mu$ satisfies $\mu(B(x,r)) \leq c r^s$ for some
  constants $c,s>0$ and any ball $B(x,r)$.
  
  Then there exists a constant $D$ and a number $\eta \in
  (0,\tau)$ such that for any $r > 0$ and
  \[
  E_{r,n} = \{\, x : d(f^n (x), x) < r \,\}
  \]
  we have
  \begin{align*}
    \mu (E_{r,n}) &\leq \int \mu (B(x,r)) \, \mathrm{d} \mu (x) +
    D e^{- \eta n} \\ & \leq c r^s + D e^{- \eta n}.
  \end{align*}
\end{lemma}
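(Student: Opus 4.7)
The plan is to decompose $E_{r,n}$ using a partition of $[0,1]$ and then exploit decay of correlations applied to characteristic functions of the partition elements. Fix a length scale $\rho \in (0,1)$ (to be optimised later), and partition $[0,1]$ into at most $N \leq 1/\rho + 1$ intervals $I_1, \ldots, I_N$ each of length at most $\rho$. For $x \in I_j$ the condition $d(f^n x, x) < r$ forces $f^n x \in \tilde{I}_j$, the $r$-neighbourhood of $I_j$, so
\[
\mathbbm{1}_{E_{r,n}}(x) \leq \sum_{j=1}^{N} \mathbbm{1}_{I_j}(x) \, \mathbbm{1}_{\tilde{I}_j}(f^n x).
\]
Integrating and applying the decay of correlations hypothesis term by term, with $\phi = \mathbbm{1}_{\tilde{I}_j}$ (so $\lVert \phi \rVert_\infty = 1$) and $\psi = \mathbbm{1}_{I_j}$ (so $\lVert \psi \rVert_{\mathrm{BV}} \leq 3$), yields
\[
\mu(E_{r,n}) \leq \sum_{j=1}^{N} \mu(I_j)\, \mu(\tilde{I}_j) + 3CN \, e^{-\tau n}.
\]

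Next, I would observe that for any $x \in I_j$ one has $\tilde{I}_j \subset B(x, r+\rho)$, because any point of $\tilde{I}_j$ lies within $r$ of some point of $I_j$, which in turn lies within $\rho$ of $x$. Hence
\[
\sum_{j=1}^{N} \mu(I_j)\, \mu(\tilde{I}_j) \leq \sum_{j=1}^{N} \int_{I_j} \mu(B(x, r+\rho)) \, \mathrm{d}\mu(x) = \int \mu(B(x, r+\rho)) \, \mathrm{d}\mu(x),
\]
and the excess over $\int \mu(B(x,r))\,\mathrm{d}\mu$ can be written as the integrated annulus measure $\int \mu\bigl(B(x, r+\rho)\setminus B(x, r)\bigr)\,\mathrm{d}\mu(x)$.

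The key step, and the main technical obstacle, is to show this annulus contribution is small enough to be absorbed into the final error while retaining enough room to control the $Ne^{-\tau n}$ term. In $[0,1]$ the annulus $B(x, r+\rho) \setminus B(x, r)$ is a union of two intervals of length $\rho$, each contained in a ball of radius $\rho$, so the hypothesis $\mu(B(y, \rho)) \leq c \rho^s$ gives pointwise $\mu(B(x, r+\rho) \setminus B(x, r)) \leq 2c \rho^s$. Assembling the estimates,
\[
\mu(E_{r,n}) \leq \int \mu(B(x, r)) \, \mathrm{d}\mu + 2c \rho^s + 3C(\rho^{-1} + 1)\, e^{-\tau n}.
\]

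Finally, I would optimise by setting $\rho = e^{-\tau n / (s+1)}$, which balances the $\rho^s$ and $\rho^{-1} e^{-\tau n}$ terms; both become of order $e^{-\tau s n/(s+1)}$. This produces the bound
\[
\mu(E_{r,n}) \leq \int \mu(B(x, r)) \, \mathrm{d}\mu + D\, e^{-\eta n}
\]
with $\eta = \tau s/(s+1) \in (0, \tau)$ and $D$ a suitable constant (inflated if necessary to handle the finitely many $n$ for which $\rho \geq 1$ would have been required, since in that regime $\mu(E_{r,n}) \leq 1$ makes the claim trivial). The second inequality $\int \mu(B(x,r)) \, \mathrm{d}\mu \leq c r^s$ then follows directly from the hypothesis.
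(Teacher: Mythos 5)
Your proposal is correct and takes essentially the same approach as the paper: partition $[0,1]$ into intervals of a scale $\rho$ balancing the annulus term against the correlation error, apply decay of correlations termwise to indicator functions, and absorb the annulus into the error. The only cosmetic differences are that the paper centers its approximating intervals at midpoints and uses the suboptimal but simpler choice $\rho = e^{-\tau n/2}$ (giving $\eta = \tfrac{\tau}{2}\min(1,s)$), whereas your choice $\rho = e^{-\tau n/(s+1)}$ optimizes the balance and yields the slightly better $\eta = \tau s/(s+1)$.
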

\begin{remark}
  This result builds upon those stated within \cite[Section~4]{KKP2}.
\end{remark}
\begin{proof}
  Let $\{I_k\}$ be a partition of $[0,1]$ into $e^{\frac{\tau}{2}
    n}$ intervals of equal length. Let $y_k$ be the mid point of
  $I_k$. Put $\delta = \frac{1}{2} e^{-\frac{\tau}{2} n}$.

  The function
  \[
  F(x,y) = \left\{ \begin{array}{ll} 1 & \text{if } d(x,y) < r
    \\ 0 & \text{otherwise} \end{array} \right.
  \]
  is such that $\mu(E_{r,n}) = \int F(f^n (x),x)\, \mathrm{d}
  \mu(x)$. We approximate $F$ by $\tilde{F}$ defined by
  \[
  \tilde{F} (x,y) = \sum_k J_k (x) \mathbbm{1}_{I_k} (y),
  \]
  where $J_k = \mathbbm{1}_{(y_k - r - \delta, y_k + r +
    \delta)}$. Then $F \leq \tilde{F}$ holds and
  \[
  \sum_k \int J_k \, \mathrm{d} \mu \int \mathbbm{1}_{I_k} \, \mathrm{d} \mu =
  \iint \tilde{F} \, \mathrm{d} \mu \mathrm{d} \mu.
  \]

  Using decay of correlations we get
  \begin{align*}
    \mu (E_{r,n}) & = \int F(f^n (x), x) \, \mathrm{d} \mu (x)
    \\ & \leq \int \tilde{F} (f^n (x), x) \, \mathrm{d} \mu (x)
    \\ & = \sum_k \int J_k (f^n (x)) \mathbbm{1}_{I_k} (x) \, \mathrm{d} \mu
    (x) \\ &\leq \sum_k \biggl( \int J_k \, \mathrm{d} \mu \int
    \mathbbm{1}_{I_k} \, \mathrm{d} \mu + 3 C e^{- \tau n} \biggr).
  \end{align*}
  Since the sum contains $e^{\frac{\tau}{2} n}$ terms, we obtain
  \[
  \mu (E_{r,n}) \leq \iint \tilde{F} \, \mathrm{d} \mu \mathrm{d}
  \mu + 3 C e^{- \frac{\tau}{2} n}.
  \]

  Finally, if we let
  \[
    G(x,y) = \left\{ \begin{array}{ll} 1 & \text{if } d(x,y) < r+
      \delta \\ 0 & \text{otherwise} \end{array} \right.
  \]
  then $\tilde{F} \leq G$ and
  \begin{align*}
    \iint \tilde{F} \, \mathrm{d} \mu \mathrm{d} \mu &\leq \iint
    G \, \mathrm{d} \mu \mathrm{d} \mu = \int \mu (B(x,r+\delta))
    \, \mathrm{d}\mu (x) \\ &\leq \int \mu (B(x,r)) \, \mathrm{d}
    \mu(x) + 2 c \delta^s \\ & = \int \mu (B(x,r)) \, \mathrm{d}
    \mu(x) + 2^{1-s} c e^{- \frac{s \tau}{2} n}.
  \end{align*}
  This proves the lemma with $D = 3C + 2^{1-s} c$ and $\eta =
  \min(\frac{\tau}{2},\frac{s \tau}{2})$.
\end{proof}

\end{document}